\documentclass[sn-nature,pdflatex]{sn-jnl}

\usepackage{graphicx}%
\usepackage{booktabs}
\usepackage{multirow}%
\usepackage{amsthm,amssymb,amsmath,amsfonts}
\usepackage[scr=boondox]{mathalfa}
\usepackage{mathtools}
\usepackage{dsfont}

\usepackage{arydshln}
\usepackage[title]{appendix}%
\usepackage{xcolor}
\usepackage{lmodern}
\usepackage{enumitem}
\usepackage{tikz}

\let\emptyset\varnothing

\newtheorem{thm}{Theorem}

\newtheorem{ass}[thm]{Assumption}
\newtheorem{defi}[thm]{Definition}
\newtheorem{col}[thm]{Corollary}
\newtheorem{prop}[thm]{Proposition}
\newtheorem{rem}[thm]{Remark}
\newtheorem{lemm}[thm]{Lemma}

\numberwithin{thm}{section} 

\usepackage[acronym,nowarn,hyperfirst=false]{glossaries}
\setkeys{glslink}{hyper=false}
\newacronym{nlp}{NLP}{nonlinear programming}
\newacronym{itd}{ITD}{integrated transmission-distribution}
\newacronym{qp}{\textsc{qp}}{quadratic program}
\newacronym{ocd}{\textsc{ocd}}{Optimality Condition Decomposition}
\newacronym{app}{\textsc{app}}{Auxiliary Problem Principle}
\newacronym{sqp}{\textsc{sqp}}{Sequential Quadratic Programming}
\newacronym{sdp}{SDP}{semi-definite programming}
\newacronym{soc}{SOC}{second-order cone}
\newacronym{qcqp}{\textsc{qcqp}}{Quadratically Constrained Quadratic Program} 
\newacronym{rapidpf}{rapid\textsc{pf}}{rapid prototyping for distributed Power Flow}
\newacronym{admm}{\textsc{admm}}{Alternating Direction Method of Multipliers}
\newacronym{dqa}{\textsc{dqa}}{Diagonal Quadratic Approximation}
\newacronym{aladin}{\textsc{aladin}}{Augmented Lagrangian based Alternating Direction Inexact Newton method}
\newacronym{aladincor}{\textsc{aladin}-\textsc{cor}}{\acrshort{aladin} with second-order correction}
\newacronym{aladinstd}{\textsc{aladin}-\textsc{std}}{standard \acrshort{aladin}}
\newacronym{hgd}{\textsc{hgd}}{heterogeneous decomposition}
\newacronym{dcc}{\textsc{dcc}}{Distribution-Cost-Correction}
\newacronym{rmse}{\textsc{rmse}}{Root-Mean-Square Error}
\newacronym{pcc}{PPC}{point of common coupling}
\newacronym{vsc}{VSC}{voltage source converter}
\newacronym{mtdc}{MTDC}{multiterminal high voltage direct current}
\newacronym{hvdc}{HVDC}{high voltage direct current }
\newacronym{hvac}{HVAC}{high voltage alternating current }
\newacronym{lcc}{LCC}{line commutated converters}
\newacronym{lcc-hvdc}{LCC-HVDC}{\acrfull{lcc} based multiterminal high voltage direct current}
\newacronym{vsc-mtdc}{VSC-MTDC}{\acrfull{vsc} based multiterminal high voltage direct current}
\newacronym{igbt}{IGBT}{insulated gate bipolar transistor}
\newacronym{vsc-hvdc}{VSC-HVDC}{\acrfull{vsc} based high voltage direct current}
\newacronym{opf}{OPF}{optimal power flow}
\newacronym{bim}{BIM}{bus injection model}
\newacronym{bfm}{BFM}{branch flow model}

\newacronym{pu}{p.u.}{per-unit system}
\newacronym{pwm}{PWM}{pulse-width modulation}
\newacronym{licq}{LICQ}{linear independence constraint qualification} 
\newacronym{sosc}{SOSC}{second order sufficient condition} 
\newacronym{scc}{SCC}{strict complementarity conditions} 
\newacronym{kkt}{KKT}{Karush–Kuhn–Tucker}

\renewcommand*{\proofname}{Proof}

\newcommand{\abs}[1]{\left|#1\right|}

\newcommand{\diag}[1]{\mathrm{diag}\left(#1\right)}

\newcommand{\squarebrackets}[1]{\left[#1\right]}

\newcommand{\lin}{{\textsc{l}}}
\newcommand{\non}{{\textsc{n}}}

\usepackage{enumitem} 

\newif\ifshowrevisions
\showrevisionstrue

\DeclareRobustCommand{\rev}[1]{{\ifshowrevisions\color{black}\fi#1}}
\newenvironment{revblock}{\begingroup\ifshowrevisions\color{black}\fi}{\endgroup}

\newcommand{\ca}{\textsc{ca}}
\newcommand{\anc}{\textsc{a}}
\newcommand{\seg}{\textsc{s}}

\newcommand{\R}{\mathbb R}
\newcommand{\C}{\mathbb C}

\newcommand{\rr}{\mathscr r}
\newcommand{\xx}{\mathscr x}
\newcommand{\zz}{\mathscr z}

\newcommand{\Zb}{\mathscr{Z}_{\mathrm{b}}}

\makeatletter
\DeclareFontEncoding{LS1}{}{}
\DeclareFontSubstitution{LS1}{stix}{m}{n}
\DeclareMathAlphabet{\mathscr}{LS1}{stixscr}{m}{n}
\makeatother

\hypersetup{
    linkcolor=red!60!black,  
    citecolor=blue!60!black, 
    urlcolor=blue!60!black,  
}

\title{\centering DistFlow and Its Conic Relaxation: Regularity, Monotonicity, \\and Hybrid Exactness via Graph Decomposition}
\author[1]{\fnm{Xinliang} \sur{Dai}}

\affil[1]{\orgdiv{Andlinger Center for Energy and the Environment}, \orgname{Princeton University}
}

\abstract{\normalsize
Classical exactness conditions for conic relaxations of the DistFlow model
rely, among other assumptions, either on all nodal injections being adjustable
or on a network-wide path condition. In practice, these conditions may not hold
uniformly across a feeder: controllable distributed energy resources (DERs) sit
at some buses and fixed loads at others, while reverse flows from DER-rich
regions can violate the path condition. This paper certifies exactness region
by region and composes the regional certificates, accounting for perturbations
that propagate downstream through voltages. We first introduce a subtree
voltage stability region, in which the network and all compatible subtrees
remain regular. Within it, two endpoint sign conditions, one relaxing the
nonnegative-flow assumption and one implied by the classical path condition,
yield monotone responses of voltages, currents, and losses to root voltages.
Using duality, the implicit function theorem, and sign properties of inverse
sensitivity matrices, we recast both classical arguments as individual sign
certificates on branch multipliers. This turns
exactness into a branchwise property that can be certified region by region.
A partition rule then splits the network into dispatchable and passive
subtrees, and a hybrid certificate combines their local conditions into
exactness of the full-network relaxation, under additional operating-limit and
objective conditions. In particular, placing controllable DERs at both ends of
a branch with large reverse flow exempts that branch from the passive sign
conditions.
}

\begin{document}
\setlength\abovedisplayskip{4pt}
\setlength\belowdisplayskip{4pt}

\maketitle

\hypersetup{
            linkcolor=blue!80!black,  
            citecolor=green!50!black, 
            urlcolor=blue!80!black,   
            anchorcolor=blue
}

\section{Introduction}\label{sec:introduction}

The increasing penetration of distributed energy resources (DERs), such as
rooftop photovoltaics, battery storage, and electric vehicle charging, is
turning distribution networks into actively operated systems~\cite{karthikeyan2019activeDistr,dutta2020topology}. The resulting variability and spatial distribution of
power injections motivate tractable methods for AC optimal power flow
(OPF).  However, these problems are nonconvex: AC feasibility is NP-hard even on tree networks~\cite{lehmann2015ac} and strongly NP-hard in general~\cite{bienstock2019strong}.

Convex relaxations address this nonconvexity from below. The DistFlow
equations~\cite{baran1989optimal1,baran1989optimal2} expose the tree structure
of a radial network and confine the nonconvexity to one quadratic equality per
branch.
Relaxing each equality to \rev{an} inequality yields a conic relaxation~\cite{jabr2006radial,farivar2013branch}. Its optimal value bounds the OPF cost from below, and a relaxed optimizer at which every cone constraint is tight is globally optimal for the original OPF. However, \rev{convex} relaxations may not be exact~\cite{molzahn2014investigation,lesieutre2011examining}.
Exactness must be guaranteed by additional conditions on the controls, the
network parameters, the operating limits, and the
objective; see~\cite{low2014convex1,LowStevenH2014,molzahn2019survey,steven2025PSA} for \rev{surveys}.

Two classical families of such conditions rest on different assumptions about \rev{the available controls of} nodal injections. The first, among other assumptions, \rev{requires} a fully \textit{\rev{dispatchable}} system, i.e., all nodal injections are controllable~\cite{farivar2011inverter,farivar2013branch}.  The second combines a path condition~\cite[Thm.~1, C1]{gan2014exact}, \rev{satisfied for instance under constant, nondecreasing, or nonincreasing resistance-to-reactance ratios from the slack bus toward the leaves}~\cite[Thm.~4]{gan2014exact}, with voltage upper bound restrictions. This approach \rev{also covers} \textit{passive} systems, \rev{in which} nodal injections are \rev{fixed}. Both approaches use perturbation arguments at a relaxed solution to show that the conic inequalities are tight.

In practice, controllable DERs are installed at some buses, while the
remaining buses have fixed injections. We call such networks
\emph{partially dispatchable}. The classical conditions then need not hold
uniformly across a feeder: the dispatchability assumption fails at every
fixed-injection bus, and a network-wide path condition may be violated by a
single region with large DER injections and reverse power flow. Nevertheless, each
condition may hold locally. This \rev{suggests} certifying exactness
region by region.

The regions, however, remain electrically coupled, and the two mechanisms couple
in different ways. The dispatchable perturbation tightens one branch while
holding every voltage fixed~\cite[Thm.~1]{farivar2013branch}, so its effect stays on that branch and its incident
buses. The passive perturbation~\cite[Thm.~1]{gan2014exact} cannot avoid changing voltages. If the region
feeds other regions, it shifts their root voltages, which in turn changes their
currents, their losses, and the power they draw across the interface. A regional
argument is therefore valid for the full network only if this propagated
response
\begin{itemize}
  \item is well defined, which requires the power flow equations of every
  subtree, with its own local slack bus, to be regular;
  \item has signs that preserve the cost decrease, which requires voltages,
  currents, or losses to respond monotonically to the root voltage; and
  \item can be absorbed by controllable interface injections without leaving
  the operating set.
\end{itemize}
Existing regularity and sensitivity results provide part of this foundation. In particular,
Aolaritei et al.~\cite{aolaritei2017distributed,aolaritei2018hierarchical}
use a continuation argument from the flat solution to establish a
nonsingular $M$-matrix property of the reduced power-flow Jacobian
under nonnegative branch flows. Their argument requires the relevant
sign structure along the continuation path, not merely at its
endpoint. Current-based conditions certify complex Jacobian nonsingularity
pointwise~\cite{wang2016necessary,cui2018new}, fixed-point analyses establish
existence and uniqueness within specified solution
domains~\cite{bolognani2016existence,bernstein2018loadflow,cui2025securing},
solvability and high-voltage solution properties are known for lossless or
structured radial networks~\cite{simpsonporco2018solvability2,dvijotham2017high},
and sensitivity coefficients of voltages and currents in radial networks can be
computed efficiently~\cite{christakou2013efficient}.
However, none of these results establishes that all compatible
subtrees, including those with an interior local slack bus, remain regular, nor does any give the signs of voltage, current, and loss responses to a root-voltage change with injections or receiving-end branch powers held fixed.

By exploring the regularity and monotonicity of the DistFlow equations, this
paper develops a compositional exactness certificate for partially
dispatchable radial networks. The main contributions are threefold.
\begin{enumerate}[label=(\roman*)]
  \item We introduce a subtree voltage stability region, in which the network
  and all compatible subtrees remain regular, and show that
  it contains the regular sets certified
  in~\cite{aolaritei2018hierarchical,wang2016necessary,cui2018new}. Within it,
  we establish \rev{nonnegative inverses} of sensitivity matrices, and thus monotone responses to the slack voltage under two endpoint sign
  conditions: one relaxes the nonnegative flow assumption
  of~\cite{aolaritei2018hierarchical}, and the other is implied by condition C1
  of~\cite[Thm.~1]{gan2014exact}.
  \item Following the positive multiplier argument~\cite{gan2012exactness}, we recast the perturbation mechanisms of~\cite[Thm.~1]{farivar2013branch}
  and~\cite[Thm.~1]{gan2014exact} as individual sign
certificates on branch multipliers via duality, the implicit function theorem, and sign properties of inverse sensitivity matrices. This turns
exactness into a branchwise property that can be certified region by region.
  In particular, for passive networks, we give the multipliers in closed form, and  this
  expression bounds them from below when the slack voltage or injections vary.
  \item We give a partition rule that splits the network into dispatchable and
  passive subtrees, and compose the local directions into a hybrid exactness
  certificate: passive directions are extended through descendant subtrees,
  and controllable interface injections restore the power balances. If each
  passive subtree satisfies a local \rev{sign} condition and the completed
  directions are tangent to the operating set \rev{and decrease the objective}, all
  multipliers are strictly positive, which yields global optimality and
  exactness of the relaxation.
\end{enumerate}
The rest of this paper is organized as follows: 
Section~\ref{sec::model} presents the problem formulation for the DistFlow
model and its conic relaxation. Section~\ref{sec:regularity-sensitivity}
introduces the subtree voltage stability region and establishes monotone
responses to the slack voltage under the two sign conditions.
Section~\ref{sec:exactness-certificates} recasts the classical certificates
as multiplier sign tests for fully dispatchable and passive networks.
Section~\ref{sec:hybrid-exactness} partitions a partially dispatchable
network, composes the local directions into the hybrid exactness certificate,
and discusses the placement of controllable DERs.
Section~\ref{sec:conclusion} concludes the paper.

\section{Problem Formulation}\label{sec::model}
\subsection{Notations}
All vector and matrix inequalities are componentwise. The orthants $\R^n_+$, $\R^n_-$, $\R^n_{++}$, and $\R^n_{--}$ contain vectors with nonnegative, nonpositive, positive, and negative entries, respectively. $\R^{m\times n}_+$, $\R^{m\times n}_-$, $\R^{m\times n}_{++}$, and $\R^{m\times n}_{--}$ impose the corresponding entrywise signs on matrices. We write $(x,y):=[x^\top,y^\top]^\top$, $[A]_{ij}$ for a matrix entry, $A[\mathcal I,\mathcal J]$ for the submatrix with rows $\mathcal I$ and columns $\mathcal J$, and $\diag{x}$ for the diagonal matrix with diagonal $x$. The vectors $\mathbf1$ and $\mathbf e_j$ denote the all-ones and canonical basis vectors. The spectral radius is $\rho(A)$; $\mathbf j:=\sqrt{-1}$. For differentiable functions, $\partial_xg$ is the Jacobian, $\nabla f$ is a column gradient, and $\mathrm D_y$ includes dependence through an implicitly defined state. A $Z$-matrix has nonpositive off-diagonal entries. A nonsingular $M$-matrix is a $Z$-matrix whose leading principal minors are all positive. Equivalently, a $Z$-matrix with nonnegative inverse~\cite[Thm.~6.2.3]{berman1994nonnegative}.

\subsection{Tree Graph Orientation}\label{sec::graph}
Let $\mathcal G=(\overline{\mathcal N},\mathcal L)$ be a tree rooted at slack bus $0$, with $\overline{\mathcal N}=\{0\}\cup\mathcal N$ and $\mathcal N=\{1,\ldots,n\}$. Branches are oriented away from $0$, and branch $j$ terminates at bus $j$, whose parent is $\pi(j)$. \rev{Buses are indexed in topological order, so that $\pi(j)<j$ for every $j\in\mathcal N$; hence every prefix $\{0,\ldots,k\}$ induces a connected subtree.}

Let $\mathcal{S}(\mathcal G)$ index the connected subtrees with \rev{at least} two buses. A subtree $\kappa$ has a local slack bus $s_\kappa$ nearest to $0$, non-slack bus index set $\mathcal N_\kappa$, \rev{branch set $\mathcal L_\kappa$,} and bus set $\overline{\mathcal N}_\kappa=\{s_\kappa\}\cup\mathcal N_\kappa$. It retains the original orientation. The subset rooted at $0$ is $\mathcal{S}_0(\mathcal G)$.

Following~\cite[Chp~4.3]{steven2025PSA}, we introduce the incidence matrices for $\mathcal G$:
\begin{defi}[Incidence]\label{def:ancestor}\label{def:incidence}
The incidence matrix is $$C=C_f-C_t\in\R^{(n+1)\times n},$$ with $+1$ at each sending bus and $-1$ at its receiving bus. Removing the slack row of
\[
C=\begin{bmatrix}
  c_0^\top\\
  \hat{C}
\end{bmatrix}
\]
gives the reduced incidence matrix
\begin{equation}\label{eq::reducedIncidence}
\hat C= \hat C_f-
\hat C_t=\hat C_f-\mathbf I_n\in\R^{n\times n}.
\end{equation}
\end{defi}
We denote the set of strict non-slack ancestors of $j$ as $\mathcal N_j^{\anc}$, the ancestors including $j$ as $\mathcal N_j^{\anc+}:=\mathcal N_j^{\anc}\cup\{j\}$, and the common ancestors of $i,j$ as $\mathcal N_{ij}^{\ca}:=\mathcal N_i^{\anc}\cap\mathcal N_j^{\anc}$.
\begin{lemm}[Reduced Incidence]\label{lem:incidence-matrix}\label{lem:path-structure}\label{lem:path-CfC}
The matrix $-\hat C$ is a nonsingular $M$-matrix with $$\det(\hat C)=(-1)^n,\quad\mathrm{and}\quad [\hat C^{-1}]_{ij}=
 \begin{dcases}-1,&i\in\mathcal N_j^{\anc+},\\0,&\text{otherwise}.
 \end{dcases}$$
Additionally,
\begin{equation}\label{eq::Cc0}
\hat C^{-\top}c_0 = - \mathbf 1, \;
\hat C_f^\top\mathbf1+c_0=\mathbf1,\;
\hat C^\top\mathbf1+c_0=0.
\end{equation}
\end{lemm}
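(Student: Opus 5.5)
Since branch $j$ terminates at bus $j$, column $j$ of $\hat C_t$ is $\mathbf e_j$, so $\hat C_t=\mathbf I_n$. Column $j$ of $\hat C_f$ is $\mathbf e_{\pi(j)}$ if $\pi(j)\neq0$ and zero otherwise. The topological ordering $\pi(j)<j$ makes $\hat C_f$ strictly upper triangular, hence nilpotent. Therefore $\hat C=\hat C_f-\mathbf I_n$ is upper triangular with diagonal $-1$, which gives $\det(\hat C)=(-1)^n$ at once.

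For the inverse, use the Neumann series for a nilpotent matrix: $-\hat C=\mathbf I_n-\hat C_f$, so $(-\hat C)^{-1}=\sum_{k=0}^{n-1}\hat C_f^{\,k}$. By induction on $k$, $[\hat C_f^{\,k}]_{ij}=1$ exactly when $i$ is the non-slack ancestor of $j$ at distance $k$, and it is zero otherwise. The induction step uses that $\hat C_f^{\,k}\mathbf e_j=\hat C_f^{\,k-1}\mathbf e_{\pi(j)}$, or $0$ if $\pi(j)=0$. Ancestors at different distances are distinct, so the sum is a $0/1$ matrix with ones exactly at $i\in\mathcal N_j^{\anc+}$. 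This yields the stated formula for $[\hat C^{-1}]_{ij}$. Moreover, $-\hat C$ has off-diagonal entries $-[\hat C_f]_{ij}\le0$, so it is a $Z$-matrix. Its inverse $\sum_k\hat C_f^{\,k}$ is nonnegative, so by the characterization recalled in the notation subsection~\cite[Thm.~6.2.3]{berman1994nonnegative}, $-\hat C$ is a nonsingular $M$-matrix.

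For the identities in \eqref{eq::Cc0}, first note that every column of $C$ sums to zero, since each column has one $+1$ and one $-1$. This gives $\mathbf1^\top C=0$, i.e.\ $c_0+\hat C^\top\mathbf1=0$, which is the third identity. Next, $(c_0)_j=1$ iff $\pi(j)=0$, and $(\hat C_f^\top\mathbf1)_j=1$ iff $\pi(j)\neq0$. Adding these gives $\hat C_f^\top\mathbf1+c_0=\mathbf1$, the second identity. Finally, transposing the third identity gives $\hat C^{\top}\mathbf1=-c_0$. Since $\hat C$ is nonsingular, this yields $\hat C^{-\top}c_0=-\mathbf1$, the first identity.

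The only step needing care is the induction describing the powers $\hat C_f^{\,k}$ as the ancestor-at-distance-$k$ relation, and the observation that these supports are disjoint. Everything else is bookkeeping with the column structure of $C$.
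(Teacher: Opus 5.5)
Your proposal is correct and follows the paper's proof. It uses the same three steps: strict upper triangularity of $\hat C_f$ under the topological ordering, the Neumann series $-\hat C^{-1}=\sum_k \hat C_f^k$ with $\hat C_f^k$ encoding the $k$th-ancestor relation, and the characterization of a nonsingular $M$-matrix as a $Z$-matrix with nonnegative inverse. The only difference is that you verify the identities \eqref{eq::Cc0} directly from the column sums of $C$, whereas the paper cites them from a textbook.
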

\begin{proof}
Eq.~\eqref{eq::Cc0} comes from \cite[Thm.~4.1]{steven2025PSA} with Eq.~\eqref{eq::reducedIncidence}.
\rev{Under the topological ordering $\pi(j)<j$, $\hat C_f$ is strictly upper triangular, so $\hat C=\hat C_f-\mathbf I_n$ is upper triangular with $\det(\hat C)=(-1)^n$, and $-\hat C^{-1}=\sum_{k\ge0}\hat C_f^k\in\R^{n\times n}_+$. Since $[\hat C_f^k]_{ij}=1$ exactly when $i$ is the $k$th ancestor of $j$, this sum gives the stated path entries.} Since \rev{the} off-diagonal entries of $-\hat C$ are nonpositive, $-\hat C$ is \rev{a} nonsingular $M$-matrix by~\cite[Thm.~6.2.3.N38]{berman1994nonnegative}.
\end{proof}
\rev{Using the path structure of $\hat C^{-1}$, we introduce the strict ancestor matrix}
\begin{equation}\label{eq::ancestorMatrix}
 \Gamma:=-\hat C_f\hat C^{-1}=-\mathbf I-\hat C^{-1},
 \rev{\qquad [\Gamma]_{ij}=
 \begin{cases}1,&i\in\mathcal N_j^{\anc},\\0,&\text{otherwise},\end{cases}}
\end{equation}
\rev{so that $\Gamma\in\R^{n\times n}_+$ is strictly upper triangular.}

\subsection{DistFlow Equations}\label{sec:distflow}\label{sec::distflow}
We use a balanced single-phase equivalent with series branch impedances and no shunts or tap-changing devices. For branch $j$, let $\zz_j=\rr_j+\mathbf j\xx_j$ be its impedance, where $\rr_j,\xx_j\ge0$ and $|\zz_j|>0$. The receiving-end complex power is $P_j+\mathbf jQ_j$, and $\ell_j$ is the squared current magnitude. At bus $j$, $u_j$ is the squared voltage magnitude and $p_j+\mathbf jq_j$ is the net complex power injection, positive for generation. The slack voltage $u_s:=u_0=|V_0|^2>0$ is fixed unless explicitly varied in a sensitivity calculation.

Following~\cite{baran1989optimal1,baran1989optimal2}, the receiving-end DistFlow equations can be written as
\begin{subequations}\label{eq::distflow}
\begin{align}
 p_j&=\sum_{k:\pi(k)=j}(P_k+\rr_k\ell_k)-P_j&\forall j\in\mathcal N,\\
 q_j&=\sum_{k:\pi(k)=j}(Q_k+\xx_k\ell_k)-Q_j&\forall j\in\mathcal N,\\
 u_j&=u_{\pi(j)}\hspace{-2pt} -2(\rr_jP_j+\xx_jQ_j)\hspace{-1pt}-\hspace{-1pt}|\zz_j|^2\ell_j&\forall j\in\mathcal N,
\\
\hspace{-6pt} u_j\ell_j&=P_j^2+Q_j^2&\forall j\in\mathcal N.\label{eq:DistFlow-nonlinear}
\end{align}
\end{subequations}
For the Jacobian analysis, collect these quantities in vectors $z:=(P,Q,u,\ell,p,q)\in\R^{6n}$ and define
\begin{align*}
 \mathbf P&=\diag{P},\;&\mathbf U=\diag{u},\quad&\mathbf X=\diag{\xx},\\
 \mathbf Q&=\diag{Q},\;&\mathbf L=\diag{\ell},\quad&\mathbf R=\diag{\rr},
 \rev{\quad\mathbf Z=\diag{\abs{\zz}}.}
\end{align*}
Then, the DistFlow equations~\eqref{eq::distflow} can be written in compact form as
\begin{subequations}\label{eq:DistFlow-matrix}
\begin{align}
 0&=\hat CP+\hat C_f\mathbf R\ell-p,\label{eq:DistFlow-matrix:P}\\
 0&=\hat CQ+\hat C_f\mathbf X\ell-q,\label{eq:DistFlow-matrix:Q}\\
 0&=2(\mathbf RP+\mathbf XQ)+\mathbf Z^2\ell-\hat C^\top u-c_0u_s,\label{eq:DistFlow-matrix:U}\\
 0&=\mathbf PP+\mathbf QQ-\mathbf U\ell.\label{eq:DistFlow-matrix:nonlinear}
\end{align}
\end{subequations}
The slack injections are
\begin{subequations}\label{eq::slack::balance}
\begin{align}
 p_0&=\sum_{j:\pi(j)=0}(P_j+\rr_j\ell_j),\\
 q_0&=\sum_{j:\pi(j)=0}(Q_j+\xx_j\ell_j).
\end{align}
\end{subequations}
Summing \rev{\eqref{eq:DistFlow-matrix:P} and~\eqref{eq:DistFlow-matrix:Q} over all buses and using~\eqref{eq::Cc0}} yields the power balance of the system:
\begin{equation}\label{eq::slack::injections}
 p_0=\rr^\top\ell-\mathbf1^\top p,\qquad
 q_0=\xx^\top\ell-\mathbf1^\top q.
\end{equation}
\subsection{OPF and Its Conic Relaxation}\label{sec:opf-distflow-soc}
We collect the operating limits on $z$ in $\mathcal Z$ and retain the DistFlow equations~\eqref{eq:DistFlow-matrix} as explicit constraints. Let $f$ be continuously differentiable on a neighborhood of the operating set. We use the closed convex box
\begin{equation}\label{eq:operating-set}
 \mathcal Z:=\{z\in\R^{6n}:z_\text{\normalfont\scshape min}\le z\le z_\text{\normalfont\scshape max}\},
\end{equation}
with strictly positive lower voltage bounds. \rev{Fixed injections are represented by coincident lower and upper bounds.}

The original OPF problem is
$
\min_{z\in\mathcal Z}\ f(z),$ subject to $\quad g(z)=0,
$
or, separating the affine parts~\eqref{eq:DistFlow-matrix:P}-\eqref{eq:DistFlow-matrix:U} and nonlinear part~\eqref{eq:DistFlow-matrix:nonlinear} as
\begin{subequations}\label{eq:opf-distflow-split}
\begin{align}
 \min_{z\in\mathcal Z}\quad&f(z)\\
 \text{s.t.}\quad&g_\lin(z)=0\quad|\;\lambda_\lin,\\
 &g_\non(z)=0\quad|\;\lambda_\non.
\end{align}
\end{subequations}
Here $\lambda_\lin\in\R^{3n}$ and $\lambda_\non\in\R^n$ are the associated multipliers. Relaxing the quadratic equalities~\eqref{eq:DistFlow-matrix:nonlinear} gives
\begin{subequations}\label{eq:opf-soc}
\begin{align}
 \min_{z\in\mathcal Z}\quad&f(z)\\
 \text{s.t.}\quad&g_\lin(z)=0\quad|\;\lambda_\lin,\\
 &g_\non(z)\leq0 \quad|\;\lambda_\non.\label{eq:opf-soc::n}
\end{align}
\end{subequations}
Each relaxed constraint in~\eqref{eq:opf-soc::n} is the rotated-cone inequality $P_j^2+Q_j^2\le u_j\ell_j$, with $u_j>0$ and $\ell_j\ge0$.
Consequently the relaxed feasible set is convex.
For \rev{the} objective function, we set
\[
 f(z)=\varphi(u,\ell,p,q;p_0(z),q_0(z)),
\]
where $p_0,q_0$ are dependent variables \rev{given by}~\eqref{eq::slack::injections}.

\rev{The relaxed problem~\eqref{eq:opf-soc} is then convex under the following standing assumption, which holds throughout the paper.}
\begin{ass}[Convex objective and operating set]\label{ass:convex-exactness}
The operating set $\mathcal Z$ is closed and convex, with positive lower voltage bounds. The objective $f$ is continuously differentiable and convex on $\mathcal Z$.
\end{ass}

\rev{We analyze the exactness of the relaxed problem~\eqref{eq:opf-soc} in the following all-optimizer sense.}
\begin{defi}[Exactness~{\cite[Def.~1]{gan2014exact}}]\label{def:exact}
Assuming the relaxed optimum of~\eqref{eq:opf-soc} is attained, the \rev{relaxation~\eqref{eq:opf-soc}} is \emph{exact} if every optimal solution satisfies $g_\non(z)=0$.
\end{defi}
\rev{The certificates developed in Sections~\ref{sec:exactness-certificates} and~\ref{sec:hybrid-exactness} are evaluated at a KKT point of the original problem~\eqref{eq:opf-distflow-split}. They rely on two properties of the DistFlow equations, regularity and monotonicity with respect to the slack voltage, which we establish first.}

\section{Regularity and Sensitivity of DistFlow}
\label{sec:regularity-sensitivity}
The exactness analysis relies on the regularity and sensitivity of the DistFlow equations~\eqref{eq:DistFlow-matrix} with \rev{their} Jacobian
\begin{equation*}
\mathrm Dg(z;u_s)
=
\left[\begin{array}{ccccrrc}
 \hat C&\mathbf0_n&\mathbf0_n&\hat C_f\mathbf R&-\mathbf I_n&\mathbf0_n&0\\
 \mathbf0_n&\hat C&\mathbf0_n&\hat C_f\mathbf X&\mathbf0_n&-\mathbf I_n&0\\
 2\mathbf R&2\mathbf X&-\hat C^\top&\hspace{10pt}\mathbf Z^2&\mathbf0_n&\mathbf0_n&-c_0\\
 2\mathbf P&2\mathbf Q&-\mathbf L\hspace{4pt}&-\mathbf U&\mathbf0_n&\mathbf0_n&0
\end{array}\right],
\end{equation*}
with respect to $(P,Q,u,\ell,p,q,u_s) \in\R^{6n+1}.$ \rev{Section~\ref{sec::nonsingularity} introduces a regularity notion that covers the network together with all of its connected subtrees. Section~\ref{sec::condensed} eliminates the affine equations and reduces regularity to a single $n\times n$ current Jacobian $\mathbf M_\ell$. Sections~\ref{sec::monotonicity::ul} and~\ref{sec:gan-matrix-certificate} give two endpoint sign conditions under which inverses built from $\mathbf M_\ell$ are nonnegative, and Section~\ref{sec:slack-voltage-monotonicity} turns these sign properties into monotone responses to the slack voltage.}
\subsection{Constraint Regularity and Subtree Voltage Stability}
\label{sec::nonsingularity}
The partial Jacobian of the DistFlow constraint $g$ with respect to $(P,Q,u,\ell)$ is
\[
J(z;u_s):=\allowbreak\partial_{(P,Q,u,\ell)}g(z;u_s)=\begin{bmatrix}
 \hat C&0&0&\hat C_f\mathbf R\\
 0&\hat C&0&\hat C_f\mathbf X\\
 2\mathbf R&2\mathbf X&-\hat C^\top&\hspace{10pt}\mathbf Z^2\\
 2\mathbf P&2\mathbf Q&-\mathbf L\hspace{4pt}&-\mathbf U
 \end{bmatrix}.
\]
Its nonsingularity gives a locally unique smooth solution map with respect to $(p,q;u_s)$. Define
\begin{equation}\label{eq::Rvs}
 \mathcal R:=\{z\,|\,g(z;u_s)=0,\ u\in\R^n_{++},\ \det J(z;u_s)\ne0\}.
\end{equation}
Eq.~\eqref{eq:DistFlow-matrix:nonlinear} implies $\ell\in\R^n_{+}$, and the flat solution
$z^0:=(\mathbf0_{2n},u_s\mathbf1_n,\mathbf0_{3n})$ belongs to $\mathcal R$. Following~\cite[Chp.~7]{van2007voltage}~\cite{aolaritei2018hierarchical}, we adopt the  standard definition for voltage stability:
\begin{defi}[Voltage stability region]
\label{def:voltage-stability}
The region $$\mathcal R_{\mathrm{vs}}:=\operatorname{comp}_{z^0}(\mathcal R),$$ is the path-connected component containing $z^0$.
\end{defi}
In the present paper, we are also interested in the constraint regularity of \rev{the} subtrees. \rev{To this end}, we introduce the following compatible operating point for each connected subtree $\kappa\in\mathcal{S}(\mathcal G)$ by retaining the \rev{variables} $(P_j,Q_j,u_j,\ell_j),$ for all $j\in\mathcal N_\kappa$ with $n_\kappa:=\abs{\mathcal N_\kappa}$.
\begin{defi}[Compatible subtree operating point]
\label{def:compatible-subtree-operating-point}
For $\kappa\in\mathcal{S}(\mathcal G)$, restrict each $a\in\{P,Q,u,\ell\}$ to $a_\kappa:=a[\mathcal N_\kappa]$, use the root voltage $u_{s_\kappa}$, and define
\begin{subequations}\label{eq:compatible-subtree-injections}
\begin{align}
 p_i^\kappa&:=p_i-\sum_{\substack{j:\pi(j)=i\\j\notin\mathcal N_\kappa}}(P_j+\rr_j\ell_j),
 \qquad i\in\mathcal N_\kappa\\
 q_i^\kappa&:=q_i-\sum_{\substack{j:\pi(j)=i\\j\notin\mathcal N_\kappa}}(Q_j+\xx_j\ell_j),
 \qquad i\in\mathcal N_\kappa.
\end{align}
\end{subequations}
The compatible point is $(z_\kappa;u_{s_\kappa})$, where
\[
 z_\kappa:=(P_\kappa,Q_\kappa,u_\kappa,\ell_\kappa,p^\kappa,q^\kappa)\in\R^{6n_\kappa}.
\]
\end{defi}

If $g(z;u_s)=0$, these adjustments give $g_\kappa(z_\kappa;u_{s_\kappa})=0$ on the retained branches, with its local slack bus $s_\kappa$. \rev{In addition to the conventional regularity~\eqref{eq::Rvs}, subtree voltage stability requires} the local partial Jacobian
$$J_\kappa
:=\partial_{(P_\kappa,Q_\kappa,u_\kappa,\ell_\kappa)}g_\kappa(z_\kappa,u_{s_\kappa})\in\R^{4n_\kappa\times 4n_\kappa},$$
\rev{to be} nonsingular for each $\kappa \in\mathcal{S}(\mathcal G)$.
\begin{defi}[Subtree voltage stability region]
\label{def:strong-voltage-regularity}
The subtree voltage stability region is
\[
 \mathcal R_{\mathrm{svs}}:=\operatorname{comp}_{z^0}
 \left\{z\in\mathcal R\,|\, \det(J_\kappa)
 \ne0,\quad\forall \kappa\in\mathcal{S}(\mathcal G)
\right\}.
\]
\end{defi}
Thus $\mathcal R_{\mathrm{svs}}\subseteq\mathcal R_{\mathrm{vs}}\subseteq\mathcal R$. Membership requires one common path from $z^0$ on which every compatible subtree remains regular. Local injections and root voltages may vary along that path, and fixing them when evaluating $J_\kappa$ is a derivative convention. \rev{The region is inherited by the subtrees themselves, which is what the regional arguments of Section~\ref{sec:hybrid-exactness} require.}

\begin{lemm}[Inheritance of $\mathcal R_{\mathrm{svs}}$]
\label{lem:strong-subtree-inheritance}
If $z\in\mathcal R_{\mathrm{svs}}$, then every compatible subtree point satisfies $z_\kappa\in\mathcal R_{\mathrm{svs},\kappa}$, where the local region is defined with slack voltage fixed at $u_{s_\kappa}$.
\end{lemm}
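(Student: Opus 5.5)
The plan is to take the path from $z^0$ to $z$ that certifies $z\in\mathcal R_{\mathrm{svs}}$ and restrict it to the subtree $\kappa$ through the map of Definition~\ref{def:compatible-subtree-operating-point}. This gives a path in the local state space of $\kappa$ from the local flat point to $z_\kappa$. We then check that every point on the restricted path lies in the local set whose component defines $\mathcal R_{\mathrm{svs},\kappa}$.

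\emph{Step 1: the restricted path.} Let $\gamma:[0,1]\to\R^{6n}$ be continuous with $\gamma(0)=z^0$ and $\gamma(1)=z$, and with every $\gamma(t)$ in the set appearing in Definition~\ref{def:strong-voltage-regularity}. Define $\gamma_\kappa(t):=\gamma(t)_\kappa$ using \eqref{eq:compatible-subtree-injections}. This map is a composition of coordinate restrictions and affine corrections, so it is continuous. At $t=0$ all flows and currents vanish and $u=u_s\mathbf1$, so $p^\kappa=q^\kappa=0$ and $\gamma_\kappa(0)$ is the flat point of $\kappa$.

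\emph{Step 2: the root voltage along the path.} The local region is defined with the slack voltage fixed at $u_{s_\kappa}$. Along $\gamma$, however, the root voltage $u_{s_\kappa}(t)$ may vary; it equals $u_s$ at $t=0$ when $s_\kappa\neq 0$. This is the main obstacle. I would handle it by the derivative convention noted after Definition~\ref{def:strong-voltage-regularity}: the root voltage enters $g_\kappa$ only through the constant column $-c_0$ and not through $J_\kappa$.

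The first attempt is to rescale. DistFlow is homogeneous under $(P,Q,u,\ell,p,q)\mapsto(\alpha P,\alpha Q,\alpha u,\alpha\ell,\alpha p,\alpha q)$ with $u_s\mapsto\alpha u_s$, since every equation is linear in these quantities except $u\ell=P^2+Q^2$, which scales by $\alpha^2$ on both sides. Use the factor $\alpha(t)=u_{s_\kappa}(1)/u_{s_\kappa}(t)$, which is positive because $u\in\R^n_{++}$ along the path. The rescaled path $\alpha(t)\gamma_\kappa(t)$ then has root voltage fixed at $u_{s_\kappa}$, still solves $g_\kappa=0$, keeps positive voltages, and runs from the flat point at voltage $u_{s_\kappa}$ to $z_\kappa$.

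Rescaling multiplies the blocks of $J_\kappa$ by $1$ or $\alpha$: the rows $(2\mathbf P,2\mathbf Q,-\mathbf L,-\mathbf U)$ scale by $\alpha$ and the remaining rows are constant. Hence $\det J_\kappa$ changes by a positive factor, and nonsingularity is preserved.

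\emph{Step 3: nonsingularity of the relevant Jacobians.} At each $t$ we need the rescaled point to lie in $\mathcal R_\kappa$ and to have nonsingular Jacobians for all subtrees $\kappa'$ of $\kappa$. Every connected subtree $\kappa'$ of $\kappa$ with at least two buses is also in $\mathcal S(\mathcal G)$. Its compatible point relative to $\kappa$ coincides with its compatible point relative to $\mathcal G$, because the injection corrections in \eqref{eq:compatible-subtree-injections} compose: branches outside $\kappa'$ but inside $\kappa$ are subtracted from $p^\kappa$, which already had the branches outside $\kappa$ subtracted. So $\det J_{\kappa'}\neq0$ along $\gamma$ by hypothesis, and this persists after rescaling by Step 2. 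Taking $\kappa'=\kappa$ gives $\det J_\kappa\neq0$, which is the full-network regularity condition of $\mathcal R_\kappa$.

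Thus the rescaled path stays in the defining set of the local region and connects the local flat point to $z_\kappa$. Therefore $z_\kappa\in\mathcal R_{\mathrm{svs},\kappa}$.
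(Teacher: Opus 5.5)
Your proposal is correct and follows the paper's proof essentially step for step. You restrict the common regular path to $\kappa$, use associativity of compatible restriction so that every sub-subtree Jacobian stays nonsingular, and rescale by $u_{s_\kappa}(1)/u_{s_\kappa}(t)$, under which the residuals stay zero and only the last block row of each Jacobian scales by a positive factor; your write-up merely spells out details the paper leaves implicit, namely continuity, the flat starting point, and the fact that rescaling commutes with restriction.
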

\begin{proof}
Restrict a common regular path $z(t)$ to $\kappa$. Compatible restriction is associative, so $z_\kappa(t)$ and all its compatible subtrees remain regular. Multiply every coordinate by $a(t):=u_{s_\kappa}(1)/u_{s_\kappa}(t)>0$. This fixes the local slack voltage at its endpoint value. The affine residuals scale by $a(t)$, the nonlinear residuals by $a(t)^2$, and only the final block row of each fixed-injection Jacobian scales, by $a(t)$. Nonsingularity is preserved. The scaled path joins the local flat solution to $z_\kappa$, proving the claim.
\end{proof}
Membership in $\mathcal R_{\mathrm{svs}}$ can be certified by a pointwise current condition. Wang et al.~\cite[Thm.~2]{wang2016necessary} show, in complex form, that the power flow Jacobian of a radial network is nonsingular whenever every bus voltage magnitude exceeds the sum of the equivalent voltage drops produced by the nodal injection currents. Cui and Sun~\cite[Thm.~1]{cui2018new} give the same criterion in real variables and show that the set it defines is connected to the no-load solution~\cite[Thm.~2]{cui2018new}. We restate the criterion in DistFlow variables. For a tree with series impedances, the bus impedance matrix is
\begin{equation}\label{eq:bus-impedance}
 \Zb:=\hat C^{-\top}\diag{\zz}\hat C^{-1}\in\C^{n\times n},\qquad
 [\Zb]_{ij}=\sum_{k\in\mathcal N_i^{\anc+}\cap\mathcal N_j^{\anc+}}\zz_k,
\end{equation}
whose $(i,j)$ entry sums the impedances on the common root path of $i$ and $j$. At a feasible point with $u\in\R^n_{++}$, bus $j$ draws the injection current magnitude $|s_j|/|V_j|$, which in DistFlow variables reads
\begin{equation}\label{eq:injection-current}
 \iota_j:=\sqrt{(p_j^2+q_j^2)/u_j},\qquad j\in\mathcal N.
\end{equation}
\begin{prop}[A sufficient certificate for subtree voltage stability]
\label{thm:wang-strong-regularity}
Let $z$ satisfy $g(z;u_s)=0$ with $u\in\R^n_{++}$. If
\begin{equation}\label{eq:wang-condition}
 \sqrt{u_i}>\sum_{j\in\mathcal N}\abs{[\Zb]_{ij}}\,\iota_j,
 \qquad \forall i\in\mathcal N,
\end{equation}
then $z\in\mathcal R_{\mathrm{svs}}$.
\end{prop}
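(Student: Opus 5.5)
Two things have to be shown. First, \eqref{eq:wang-condition} must imply that $J$ and every subtree Jacobian $J_\kappa$ are nonsingular at $z$. Second, $z$ must be joined to $z^0$ by a path along which all of these stay nonsingular.

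\textbf{Step 1: equivalence with the complex Jacobian.} On the feasible set with $u\in\R^n_{++}$, the DistFlow map is a smooth reparametrization of the bus injection model in polar variables, modulo angles, which radiality lets us recover uniquely. From this I will show that $\det J(z;u_s)\neq0$ holds if and only if the complex power flow Jacobian with fixed $(p,q)$ and fixed slack voltage is nonsingular, i.e. the Jacobian treated by Wang et al.~\cite[Thm.~2]{wang2016necessary} and Cui and Sun~\cite[Thm.~1]{cui2018new}.

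Concretely, I will use Lemma~\ref{lem:incidence-matrix} to eliminate $(P,Q)$ via $P=\hat C^{-1}(p-\hat C_f\mathbf R\ell)$ and $u$ via $\hat C^{-\top}$. This reduces $\det J$, up to the nonzero factor $\det(\hat C)^3=\pm1$, to an $n\times n$ determinant in $\ell$. That determinant should match the complex Jacobian determinant after a diagonal change of variables by positive factors $\sqrt{u_j}$. This bookkeeping is routine but somewhat tedious. With it in hand, the cited theorem gives $\det J\neq0$ under~\eqref{eq:wang-condition}.

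\textbf{Step 2: inheritance to subtrees.} For each $\kappa\in\mathcal S(\mathcal G)$, I will show that the compatible point $z_\kappa$ again satisfies~\eqref{eq:wang-condition}, with $\Zb$ replaced by the subtree impedance matrix measured from $s_\kappa$. This step is the main obstacle.

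By~\eqref{eq:bus-impedance}, $[\Zb^\kappa]_{ij}$ sums the impedances on the common path of $i$ and $j$ inside $\kappa$. This is the segment of $\mathcal N_i^{\anc+}\cap\mathcal N_j^{\anc+}$ below $s_\kappa$, so $|[\Zb^\kappa]_{ij}|\le|[\Zb]_{ij}|$ by the triangle inequality. The convention $\rr,\xx\ge0$ even gives componentwise monotonicity of real and imaginary parts.

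The modified injection $p^\kappa_i+\mathbf j q^\kappa_i$ equals $p_i+\mathbf j q_i$ minus the sending-end powers of the removed children. Its current is therefore the original injection current plus the currents flowing into the pruned subtrees. Each such current equals the sum of injection currents of the buses in the pruned subtree, since the network has no shunts and currents add linearly in phasor form. Hence $\iota^\kappa_i\le\sum_{j\in D_i}\iota_j$, where $D_i$ is $i$ together with its pruned descendants.

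For every $j\in D_i$ and every $m\in\mathcal N_\kappa$, the common path of $m$ and $j$ inside $\kappa$ equals the common path of $m$ and $i$ inside $\kappa$. This gives $|[\Zb^\kappa]_{mi}|\le|[\Zb]_{mj}|$. Removing the buses in $\mathcal N\setminus\mathcal N_\kappa$ above $s_\kappa$ only drops nonnegative terms. Summing, the right-hand side of the subtree condition is at most the original one, so it stays below $\sqrt{u_m}$. Step 1 applied to $\kappa$ then yields $\det J_\kappa\neq0$.

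The delicate point is that $\iota_j$ is defined as $|s_j|/|V_j|$, not as a phasor sum. I therefore need the current triangle inequality through the actual branch currents, using $\ell_k$ and the DistFlow identities, rather than through powers.

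\textbf{Step 3: connectedness.} Let $\mathcal W$ be the set of feasible points satisfying~\eqref{eq:wang-condition}. By Cui and Sun~\cite[Thm.~2]{cui2018new}, $\mathcal W$ is path-connected to the no-load solution $z^0$, which trivially satisfies the strict inequality. By Steps 1 and 2, every point of $\mathcal W$ lies in $\{z\in\mathcal R:\det J_\kappa\neq0\ \forall\kappa\}$. The connecting path therefore stays in this set, and $z$ lies in its component containing $z^0$, i.e. $z\in\mathcal R_{\mathrm{svs}}$.

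If the cited connectivity is stated only for fixed injections scaled toward zero, I will instead use the homotopy $t\mapsto(tp,tq)$ and continue the solution by the implicit function theorem, which Step 1 makes available. The right-hand side of~\eqref{eq:wang-condition} scales like $t$, while $\sqrt{u_i}$ remains bounded away from it along the continuation. This keeps the condition strict for all $t\in[0,1]$.
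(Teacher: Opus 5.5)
Your outline follows the paper's proof sketch. It has the same four ingredients: equivalence of $\det J\neq0$ with nonsingularity of the phasor map $V\mapsto s(V)$ at fixed $V_0$; Wang's diagonal-dominance bound; inheritance to compatible subtrees via $|[\Zb^\kappa]_{ma}|\le|[\Zb]_{mj}|$ for $j$ in your pruned set $D_a$, together with the Kirchhoff identity $[I_{\mathrm b}^\kappa]_a=\sum_{j\in D_a}[I_{\mathrm b}]_j$; and the Cui--Sun path to the no-load point. Applying your Step~2 pointwise along a path that stays in $\mathcal W$ is exactly what the paper's Step~4 does at each $t$. Two sub-steps, however, do not work as written.

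In Step~1, the concrete bookkeeping cannot succeed. $\mathbf M_\ell$ is $n\times n$, while the Jacobian of $V\mapsto s(V)$ is a real $2n\times2n$ matrix, so no diagonal rescaling by $\sqrt{u_j}$ identifies their determinants. The equivalence has to go through the branch rows~\eqref{eq:DistFlow-matrix:U} and~\eqref{eq:DistFlow-matrix:nonlinear}. These rows have full rank $2n$: for branch $j$, the $(P_j,Q_j,\ell_j)$ entries $(2\rr_j,2\xx_j,|\zz_j|^2)$ and $(2P_j,2Q_j,-u_j)$ are proportional only if $u_{\pi(j)}=0$. Their solution set is smoothly and injectively parametrized by $V\in\C^n$ through $V_j=V_{\pi(j)}/(1+\zz_j\overline{S_j}/u_j)$. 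Hence the kernel of the linearized branch rows equals the image of the phasor parametrization. So every kernel vector of $J$ comes from some $\delta V$ with $\mathrm D_Vs\,\delta V=0$, and conversely, giving $\ker J\cong\ker\mathrm D_Vs$. Your phrase ``smooth reparametrization'' points at this, but the rank fact is what makes it a proof. A smaller point: $|[\Zb^\kappa]_{ij}|\le|[\Zb]_{ij}|$ does not follow from the triangle inequality. It holds because every $\zz_k$ lies in the closed first quadrant, as you note in the next sentence.

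Your fallback in Step~3 is not justified. Under $s(t)=ts$ the injection currents are $t|s_j|/|V_j(t)|$, so the right-hand side of~\eqref{eq:wang-condition} does not scale like $t$. Nothing you state controls $|V_i(t)|$ along the continuation, so the condition, and with it the implicit-function continuation, could fail before $t=0$. The homotopy that works, and the one the paper uses following Cui--Sun, scales the injection currents instead. Taking $I_{\mathrm b}(t)=tI_{\mathrm b}$ makes $V(t)=V_0\mathbf 1+t\Zb I_{\mathrm b}$ affine in $t$. With $R_i:=\sum_j|[\Zb]_{ij}|\iota_j<|V_i|$, this gives $|V_i(t)|\ge|V_i|-(1-t)R_i>tR_i$ for all $t\in[0,1]$, so the condition holds along the whole path. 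The injections $s(t)=V(t)\circ\overline{I_{\mathrm b}(t)}$ then vary nonlinearly, which $\mathcal R$ permits.
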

Since $|V_i|=\sqrt{u_i}$, condition~\eqref{eq:wang-condition} is the complement of the boundary condition~\cite[Eq.~(42)]{wang2016necessary} and coincides with~\cite[Eq.~(9)]{cui2018new}. It involves only $(u,p,q)$ and the line impedances, and for fixed injections it is second-order cone representable in the squared voltages~\cite[Prop.~1]{cui2018new}. The pointwise criterion certifies nonsingularity of the full-network Jacobian at one point. Proposition~\ref{thm:wang-strong-regularity} adds that one current-scaling path from the flat solution keeps the criterion valid for the full network and for every compatible subtree, including those with an interior local slack bus. A proof sketch is given in Appendix~\ref{app:wang-strong-regularity}.

\subsection{Affine Elimination and Reduced Jacobians}\label{sec::condensed}
\rev{Nonsingularity of $J$ is most easily analyzed after eliminating the affine equations.} Partition $z=(x,y)$ with $$x=(P,Q,u)\quad\mathrm{and}\quad y=(\ell,p,q).$$
The affine Jacobian $\partial_x g_\lin\in\R^{3n\times 3n}$ is block-triangular, satisfying $\det(\partial_xg_\lin)=1$ as demonstrated in the proofs of~\cite[Thm.~1]{aolaritei2018hierarchical}~\cite[Prop.~1]{dai2024real}. Hence, the affine equations~\eqref{eq:DistFlow-matrix:P}-\eqref{eq:DistFlow-matrix:U} define a global map $x=x^\star(y;u_s)$.
Using the ancestor matrix~\eqref{eq::ancestorMatrix}
we define
\begin{subequations}
  \begin{align}
\hat{\mathbf Z}:=&\hspace{-2pt}-\mathbf Z^2\hspace{-1pt}-2\mathbf R\Gamma\mathbf R-2\mathbf X\Gamma\mathbf X\in\R^{n\times n}_{-},\\
\hat{\mathbf U}:=&\hspace{10pt}\mathbf U\hspace{2pt}-2\mathbf P\Gamma\mathbf R-2\mathbf Q\Gamma\mathbf X\in\R^{n\times n}.    \label{eq:hatU}
  \end{align}
\end{subequations}
Then, the \rev{derivative} of the map \rev{is}
\[
 \mathrm D x^\star(y;u_s)=
    \squarebrackets{\begin{array}{ccc;{2pt/2pt}c}
    \Gamma\mathbf R&\hat C^{-1}&\mathbf0_n&0\\
    \Gamma\mathbf X&\mathbf0_n&\hat C^{-1}&0\\
    -\hat{C}^{-\top}\hat{\mathbf Z}
    &2\hat C^{-\top}\mathbf R\hat C^{-1}
    &2\hat C^{-\top}\mathbf X\hat C^{-1}&\mathbf1
    \end{array}}
\]
with
\begin{equation}\label{eq:affine-elimination-slack}
 \partial_\ell x^\star(y;u_s)=
 \begin{bmatrix}\Gamma\mathbf R\\\Gamma\mathbf X\\-\hat C^{-\top}\hat{\mathbf Z}\end{bmatrix},
 \;
 \partial_{u_s}x^\star(y;u_s)=
 \begin{bmatrix}0\\0\\\mathbf1\end{bmatrix}.
\end{equation}
Define the reduced nonlinear residual
\[
 \hat g_\non(y;u_s):=g_\non(x^\star(y;u_s),y;u_s).
\]
Its derivatives give
\begin{align*}
\mathrm D_{(y,u_s)}\hat g_\non(y;u_s)=
-\begin{bmatrix}
 \mathbf M_\ell&
 \mathbf M_p&
 \mathbf M_q&
 m_s
\end{bmatrix}\in\R^{n\times(3n+1)},
\end{align*}
with
\begin{align*}
 \mathbf M_\ell
 &:=-\partial_\ell\hat g_\non(y;u_s)
 =\hat{\mathbf U}-\mathbf L \hat{C}^{-\top} \hat{\bf Z},\\
 \mathbf M_p
 &:=-\partial_p\hat g_\non(y;u_s)
 =2(\mathbf L\hat C^{-\top}\mathbf R-\mathbf P)\hat C^{-1},\\
 \mathbf M_q
 &:=-\partial_q\hat g_\non(y;u_s)
 =2(\mathbf L\hat C^{-\top}\mathbf X-\mathbf Q)\hat C^{-1},\\
 m_s&
 :=-\partial_{u_s}\hat g_\non(y;u_s)=
 \mathbf L\mathbf1=\ell,
\end{align*}
and, by Schur complement, we have
\begin{equation}\label{eq:subtree-det-reduced}
 \det(J)=(-1)^n\det(\mathbf M_\ell).
\end{equation}
Thus, nonsingularity of $J$ is equivalent to nonsingularity of $\mathbf M_\ell$ at a feasible point. For \rev{$z\in\mathcal R$ with $(p,q)$ fixed}, implicit differentiation gives
\begin{subequations}\label{eq::passive::dus}
\begin{align}
 \partial_{u_s}\ell^\star(p,q;u_s)&=-\mathbf M_\ell^{-1}\ell,\label{eq::dldus}\\
 \partial_{u_s}u^\star(p,q;u_s)&=\mathbf1+\mathbf K_u\ell,\label{eq::dudus}
\end{align}
\end{subequations}
with
\begin{align}\label{eq::Ku}
 \mathbf K_u:=\hat C^{-\top}\hat{\mathbf Z}\mathbf M_\ell^{-1}.
\end{align}
\rev{Regularity alone fixes no sign in~\eqref{eq::passive::dus}.} The following two certificates sign different parts of these formulas. The region $\mathcal F$ guarantees $\mathbf M_\ell^{-1}\in\R^{n\times n}_{+}$. The region $\mathcal C$ guarantees $\mathbf K_u\in\R^{n\times n}_{+}$ and nonnegative impedance-weighted inverse products.

\subsection{Current Certificate under Limited Reverse Flows}
\label{sec::monotonicity::ul}
The path structure gives an explicit test for $\mathbf M_\ell$ to be a $Z$-matrix. Combining that endpoint sign test with compatible-subtree regularity then yields a nonsingular $M$-matrix. \rev{We also show that the pathwise condition behind~\cite[Thm.~2]{aolaritei2018hierarchical}, once stated explicitly, is sufficient for subtree voltage stability.}

Set $\zeta_{kj}:=\rr_k\rr_j+\xx_k\xx_j\ge0$. Lemma~\ref{lem:path-CfC} gives
\begin{equation}\label{eq::hatU}
 [\hat{\mathbf U}]_{ij}=
 \begin{cases}
 u_j,&i=j,\\
 -2(\rr_jP_i+\xx_jQ_i),&i\in\mathcal N_j^{\anc},\\
 0,&\text{otherwise}.
 \end{cases}
\end{equation}
The same path expansion gives
\begin{equation}\label{eq:M-entry}
\begin{gathered}
[\mathbf M_\ell]_{ij}=
\begin{cases}
 u_j-\ell_j\Big(|\zz_j|^2+
 2\displaystyle\sum_{k\in\mathcal N_j^{\anc}}\zeta_{kj}\Big),&i=j,\\[1em]
 -2\Big(\rr_jP_i+\xx_jQ_i+
 \ell_i\displaystyle\sum_{k\in\mathcal N_i^{\anc+}}\zeta_{kj}\Big),
 &i\in\mathcal N_j^{\anc},\\[1em]
 -\ell_i\Big(|\zz_j|^2+
 2\displaystyle\sum_{k\in\mathcal N_j^{\anc}}\zeta_{kj}\Big),
 &j\in\mathcal N_i^{\anc},\\[1em]
 -2\ell_i\displaystyle\sum_{k\in\rev{\mathcal N_{ij}^{\ca}}}
 \zeta_{kj},&\text{otherwise}.
\end{cases}
\end{gathered}
\end{equation}
Only the ancestor off-diagonal entries $i\in\mathcal N^{\anc}_j$ can be positive.
\begin{ass}[Limited reverse flows]
\label{ass:ancestor-coupling}
For every $j\in\mathcal N$ and every $i\in\mathcal N_j^{\anc}$,
\begin{equation}\label{eq:ancestor-coupling}
 \rr_jP_i+\xx_jQ_i\ge
 -\ell_i\sum_{k\in\mathcal N_i^{\anc+}}\zeta_{kj}.
\end{equation}
\end{ass}
\begin{lemm}[$Z$-matrix Structure]\label{lem:Mell-Z}
For $\ell\in\R^n_{+}$, the off-diagonal entries of $\mathbf M_\ell$ are nonpositive if and only if Assumption~\ref{ass:ancestor-coupling} holds.
\end{lemm}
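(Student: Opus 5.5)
The plan is to read the sign of each off-diagonal entry of $\mathbf M_\ell$ directly from the entrywise formula~\eqref{eq:M-entry}. First I verify that formula from the definition $\mathbf M_\ell=\hat{\mathbf U}-\mathbf L\hat C^{-\top}\hat{\mathbf Z}$. This uses the path structure of $\hat C^{-1}$ (Lemma~\ref{lem:path-CfC}), the ancestor matrix $\Gamma$ from~\eqref{eq::ancestorMatrix}, and the entries of $\hat{\mathbf U}$ in~\eqref{eq::hatU}. Since $[\hat C^{-\top}]_{ik}=-1$ exactly when $k\in\mathcal N_i^{\anc+}$, we get
\[
[-\hat C^{-\top}\hat{\mathbf Z}]_{ij}=\sum_{k\in\mathcal N_i^{\anc+}}[\hat{\mathbf Z}]_{kj}.
\]
Here $[\hat{\mathbf Z}]_{kj}=-|\zz_j|^2$ if $k=j$, $-2\zeta_{kj}$ if $k\in\mathcal N_j^{\anc}$, and $0$ otherwise. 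Splitting according to whether $i=j$, $i\in\mathcal N_j^{\anc}$, $j\in\mathcal N_i^{\anc}$, or neither, the sum runs respectively over $\{j\}\cup\mathcal N_j^{\anc}$, $\mathcal N_i^{\anc+}$, $\{j\}\cup\mathcal N_j^{\anc}$, and $\mathcal N_{ij}^{\ca}$. Multiplying by $-\ell_i$ and adding $[\hat{\mathbf U}]_{ij}$, which is nonzero off the diagonal only when $i\in\mathcal N_j^{\anc}$, reproduces the four cases.

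Next comes the case analysis for $i\ne j$, using $\ell\in\R^n_+$ and $\zeta_{kj}\ge0$.
\begin{itemize}
\item In the case $j\in\mathcal N_i^{\anc}$, the entry is $-\ell_i$ times a nonnegative quantity, so it is $\le0$.
\item In the unrelated case, the entry is $-2\ell_i$ times a sum of $\zeta$'s, so it is $\le0$.
\item This leaves the case $i\in\mathcal N_j^{\anc}$. There the entry is
\[
-2\Big(\rr_jP_i+\xx_jQ_i+\ell_i\sum_{k\in\mathcal N_i^{\anc+}}\zeta_{kj}\Big),
\]
which is $\le0$ exactly when~\eqref{eq:ancestor-coupling} holds for that pair $(i,j)$.
\end{itemize}

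Both directions follow. If Assumption~\ref{ass:ancestor-coupling} holds, every off-diagonal entry is nonpositive. Conversely, if every off-diagonal entry is nonpositive, then in particular every ancestor-pair entry is, which is precisely~\eqref{eq:ancestor-coupling} for all $j$ and all $i\in\mathcal N_j^{\anc}$.

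The only real obstacle is the bookkeeping in the first step, namely getting the index sets of the path sums right. In the ancestor case, $k$ ranges over $\mathcal N_i^{\anc+}$, which lies inside $\mathcal N_j^{\anc}$, so $k\ne j$ and only the $\zeta_{kj}$ terms appear. In the unrelated case, the nonzero terms are $k\in\mathcal N_i^{\anc+}\cap\mathcal N_j^{\anc}=\mathcal N_{ij}^{\ca}$, because $i\notin\mathcal N_j^{\anc}$. Once~\eqref{eq:M-entry} is in hand, the equivalence itself is immediate.
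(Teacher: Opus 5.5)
Your proposal is correct and follows the paper's argument: the paper obtains the entrywise formula \eqref{eq:M-entry} from the same path expansion of $\hat{\mathbf U}-\mathbf L\hat C^{-\top}\hat{\mathbf Z}$. It then observes that only the ancestor entries $i\in\mathcal N_j^{\anc}$ can be positive, and their sign condition is exactly \eqref{eq:ancestor-coupling}. One small sign slip in your verification: since $\mathbf M_\ell=\hat{\mathbf U}+\mathbf L(-\hat C^{-\top}\hat{\mathbf Z})$, the nonpositive path sum $\sum_{k\in\mathcal N_i^{\anc+}}[\hat{\mathbf Z}]_{kj}$ is multiplied by $+\ell_i$; the factor $-\ell_i$ is correct only if it is applied to the nonnegative magnitudes $|\zz_j|^2$ and $2\zeta_{kj}$.
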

The nonnegative sending-end power assumption in~\cite[Ass.~1]{aolaritei2018hierarchical} is sufficient: substituting $P_i=P_i^s-\rr_i\ell_i$ and $Q_i=Q_i^s-\xx_i\ell_i$ into~\eqref{eq:ancestor-coupling} leaves
$$\rr_jP_i^s+\xx_jQ_i^s+\ell_i\sum_{k\in\mathcal N_i^{\anc}}\zeta_{kj}\ge0.$$
Define
\begin{equation}\label{eq::F}
 \mathcal F:=\{z:\text{Assumption~\ref{ass:ancestor-coupling} holds}\}.
\end{equation}
In the proof of~\cite[Thm.~2]{aolaritei2018hierarchical}, Aolaritei et al.\ use Perron--Frobenius theory
\cite[Ch.~8]{meyer2023matrix} to establish the nonsingular $M$-matrix property of the reduced power flow Jacobian under their nonnegative flow assumption \cite[Ass.~1]{aolaritei2018hierarchical}. To apply this continuation argument, regularity and the $Z$-matrix property must hold along the same continuous path from the flat solution $z^0$ to the operating point $z$.

Membership in $\mathcal R_{\mathrm{vs}}$ guarantees a regular path from $z^0$ to $z$. However, imposing
Assumption~\ref{ass:ancestor-coupling} only at the end point $z$ does not establish that this path remains in $\mathcal F$. We therefore make the continuation requirement explicit by
\begin{align*}
\operatorname{comp}_{z^0}(\mathcal R\cap\mathcal F)
     = \operatorname{comp}_{z^0} (\mathcal R_{\mathrm{vs}}\cap\mathcal F)
 \subseteq \mathcal R_{\mathrm{vs}}\cap\mathcal F.
\end{align*}
Thus, every point in $\operatorname{comp}_{z^0}(\mathcal R\cap\mathcal F)$ can be reached from $z^0$ along a path on which both regularity and the sign condition are preserved.
\begin{prop}[$M$-matrix via regular path]
\label{thm:continuation-M} If $$z\in\operatorname{comp}_{z^0}(\mathcal R\cap\mathcal F),$$ then $\mathbf M_\ell$ is a nonsingular $M$-matrix and $\mathbf M_\ell^{-1}\in\R^{n\times n}_+$.
\end{prop}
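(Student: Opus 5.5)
We argue by continuation along the path that membership in $\operatorname{comp}_{z^0}(\mathcal R\cap\mathcal F)$ provides. Fix a continuous path $t\mapsto z(t)$, $t\in[0,1]$, with $z(0)=z^0$, $z(1)=z$, and $z(t)\in\mathcal R\cap\mathcal F$ for all $t$. Along it, $\mathbf M_\ell(t)$ depends continuously on $t$, because its entries~\eqref{eq:M-entry} are polynomial in $(P,Q,u,\ell)$. Three facts hold on the whole path.
\begin{enumerate}[label=(\alph*)]
\item Since $z(t)\in\mathcal F$ and $\ell(t)\in\R^n_+$ by~\eqref{eq:DistFlow-matrix:nonlinear}, Lemma~\ref{lem:Mell-Z} shows that $\mathbf M_\ell(t)$ is a $Z$-matrix.
\item Since $z(t)\in\mathcal R$, \eqref{eq:subtree-det-reduced} gives $\det\mathbf M_\ell(t)\ne0$.
\item At the flat point, $P=Q=\ell=0$ and $u=u_s\mathbf1$, so $\mathbf M_\ell(0)=u_s\mathbf I$, which is a nonsingular $M$-matrix.
\end{enumerate}

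Write $\mathbf M_\ell(t)=s\mathbf I-B(t)$ with $B(t)\in\R^{n\times n}_+$ and $s$ a large constant. The constant is chosen uniformly on the compact path, for instance $s>\max_t\max_j[\mathbf M_\ell(t)]_{jj}$, so the diagonal of $B(t)$ is also nonnegative. The eigenvalues of $\mathbf M_\ell(t)$ are $s-\mu$ for $\mu$ an eigenvalue of $B(t)$. By Perron--Frobenius~\cite[Ch.~8]{meyer2023matrix}, $\rho(B(t))$ is itself an eigenvalue of $B(t)$, so $s-\rho(B(t))$ is a real eigenvalue of $\mathbf M_\ell(t)$.

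The function $t\mapsto\rho(B(t))$ is continuous, since the spectrum depends continuously on the entries. At $t=0$ we have $\rho(B(0))=s-u_s<s$. If $\rho(B(t^\ast))=s$ held for some $t^\ast$, then $0$ would be an eigenvalue of $\mathbf M_\ell(t^\ast)$, contradicting (b). By the intermediate value theorem, $\rho(B(t))<s$ for all $t$, in particular at $t=1$. The characterization $Z$-matrix with $s>\rho(B)$~\cite[Thm.~6.2.3]{berman1994nonnegative} then makes $\mathbf M_\ell(1)$ a nonsingular $M$-matrix. The Neumann series
\[
\mathbf M_\ell^{-1}=s^{-1}\sum_{k\ge0}(B/s)^k
\]
converges and has nonnegative entries, so $\mathbf M_\ell^{-1}\in\R^{n\times n}_+$.

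An alternative route uses principal minors. Every eigenvalue of a $Z$-matrix $s\mathbf I-B$ has real part at least $s-\rho(B)$. The determinant is continuous and nonzero along the path, so it stays positive. This alone does not control all leading principal minors, which is why the spectral-radius route above is preferable.

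The main obstacle is ensuring that the $Z$-sign pattern holds along the entire path, not just at the endpoint. Without it, the Perron root argument fails. This is exactly what the hypothesis $\operatorname{comp}_{z^0}(\mathcal R\cap\mathcal F)$ supplies, rather than $\mathcal R_{\mathrm{vs}}\cap\mathcal F$. A secondary technicality is picking $s$ uniformly so that $B(t)\ge0$ throughout; compactness of $[0,1]$ and continuity of the diagonal entries handle this.
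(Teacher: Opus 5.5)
Your proof is correct and is essentially the paper's argument: Perron--Frobenius continuation along a path in $\mathcal R\cap\mathcal F$, where the Perron root cannot cross the critical value because $\mathbf M_\ell$ stays nonsingular. The only difference is cosmetic. The paper splits $\mathbf M_\ell=\diag{u}(\mathbf I-K)$, gets $K\ge0$ directly from the entry formula (using $[\mathbf M_\ell]_{jj}\le u_j$), and tracks $\rho(K)<1$ starting from $K=0$. You instead use a uniform shift $s\mathbf I-B$ and track $\rho(B)<s$.
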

\begin{revblock}
\begin{proof}
Along a continuous path $z(\tau)\in\mathcal R\cap\mathcal F$ from $z^0$ to $z$, put $M=\mathbf M_\ell(z(\tau))$, $\Lambda=\diag{u(\tau)}$, and $K=\Lambda^{-1}(\Lambda-M)$. Eq.~\eqref{eq:M-entry} and Lemma~\ref{lem:Mell-Z} give $K\in\R^{n\times n}_{+}$, with $K(0)=0$. Nonsingularity of $M=\Lambda(\mathbf I-K)$ excludes $\rho(K)=1$ by Perron--Frobenius theory. Continuity therefore gives $\rho(K)<1$ throughout the path, proving the nonsingular $M$-matrix property~\cite[Thm.~6.2.3.N45]{berman1994nonnegative}.
\end{proof}
\end{revblock}
\rev{The argument needs $K\ge0$, hence Assumption~\ref{ass:ancestor-coupling}, at every point of the path. We next replace this pathwise sign requirement by compatible-subtree regularity and a sign test at the end point only.}

Fix a $\kappa\in\mathcal{S}(\mathcal G)$ and use the compatible operating point $(z_\kappa;u_{s_\kappa})$ from Definition~\ref{def:compatible-subtree-operating-point}. Distinguish the principal submatrix
\[
 A_\kappa:=\mathbf M_\ell[\mathcal N_\kappa,\mathcal N_\kappa]
\]
from the local reduced current Jacobian $\mathbf M_{\ell,\kappa}$, which is formed on the subtree $\mathcal G_\kappa$ with its own local slack bus $s_\kappa$. Comparing the entries in~\eqref{eq:M-entry} gives the exact correction for the upstream impedance removed when the slack changes:
\begin{equation}\label{eq:subtree-reduced-rank-one}
 \mathbf M_{\ell,\kappa}=A_\kappa+\ell_\kappa v_\kappa^\top,
\end{equation}
where
\[
 [v_\kappa]_j:=
 \begin{cases}
 0,&s_\kappa=0,\\[0.3em]
 2\displaystyle\sum_{k\in\mathcal N_{s_\kappa}^{\anc+}}
   \zeta_{kj},&s_\kappa\ne0,
 \end{cases}
 \qquad j\in\mathcal N_\kappa.
\]
For subtrees rooted at bus $0$, i.e., $\kappa\in\mathcal{S}_0(\mathcal G)$, we have $v_\kappa=0$. In particular, \rev{the} topological ordering in Section~\ref{sec::graph} makes each prefix $\{0,\ldots,k\}$ connected, and its local matrix equals the corresponding leading principal submatrix \rev{of} $\mathbf M_\ell$, i.e., $$\mathbf M_\ell^{[k]}:=\mathbf M_\ell[1\!:\!k,1\!:\!k]\rev{.}$$ Determinant continuity~\cite{aolaritei2017distributed,aolaritei2018hierarchical} on those subtrees rooted at $0$ yields the following signs.
\begin{lemm}[Leading principal minors of $\mathbf M_\ell$]
\label{lemm::positive::Ml}
If $z\in\mathcal R_{\mathrm{svs}}$, then
\[
 \det(\mathbf M_\ell^{[k]})>0,\qquad k=1,\ldots,n.
\]
\end{lemm}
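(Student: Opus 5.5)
The plan is a determinant-continuity argument along the common regular path that defines $\mathcal R_{\mathrm{svs}}$, applied separately to each prefix subtree.

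Fix $k\in\{1,\ldots,n\}$ and let $\kappa_k\in\mathcal S_0(\mathcal G)$ be the subtree with bus set $\{0,\ldots,k\}$. This set is connected by the topological ordering of Section~\ref{sec::graph}. Its local slack is bus $0$, so $v_{\kappa_k}=0$ in~\eqref{eq:subtree-reduced-rank-one}. Hence the local reduced current Jacobian is exactly the leading principal submatrix, $\mathbf M_{\ell,\kappa_k}=\mathbf M_\ell^{[k]}$.

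Applying the Schur-complement identity~\eqref{eq:subtree-det-reduced} to the subtree $\mathcal G_{\kappa_k}$ gives $\det(J_{\kappa_k})=(-1)^k\det(\mathbf M_\ell^{[k]})$. The affine elimination of Section~\ref{sec::condensed} is identical on any tree with its own slack, so the identity transfers verbatim. Consequently $\det(J_{\kappa_k})\neq0$ if and only if $\det(\mathbf M_\ell^{[k]})\neq0$. For $k=n$ the prefix is the whole network, and the condition is the requirement $z\in\mathcal R$.

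By Definition~\ref{def:strong-voltage-regularity} there is a continuous path $z(\tau)$, $\tau\in[0,1]$, from $z^0$ to $z$. Along it, $g(z(\tau);u_s)=0$, and every compatible subtree Jacobian, in particular each $J_{\kappa_k}$, is nonsingular. The restriction $z_{\kappa_k}(\tau)$ of Definition~\ref{def:compatible-subtree-operating-point} depends continuously on $\tau$. The entries~\eqref{eq:M-entry} are polynomial in $(P,Q,u,\ell)$, so $\tau\mapsto\det(\mathbf M_\ell^{[k]}(z(\tau)))$ is continuous and never vanishes on $[0,1]$.

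At the flat solution $z^0$ we have $P=Q=\ell=0$ and $u=u_s\mathbf 1$. Then~\eqref{eq:M-entry} gives $\mathbf M_\ell=u_s\mathbf I$, so $\det(\mathbf M_\ell^{[k]}(z^0))=u_s^k>0$. By the intermediate value theorem the determinant stays positive along the whole path, and in particular $\det(\mathbf M_\ell^{[k]})>0$ at $z$.

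The main obstacle is the bookkeeping that justifies the identification $\mathbf M_{\ell,\kappa_k}=\mathbf M_\ell^{[k]}$ and the determinant relation on the subtree. Two facts must be checked.

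First, the local matrix built from the compatible point depends only on the retained $(P,Q,u,\ell)$ of buses $1,\ldots,k$ and on the impedances. The modified injections $p^\kappa,q^\kappa$ enter only the affine right-hand side, not the Jacobian with respect to $(P,Q,u,\ell)$. Also, the ancestor sets of buses $1,\ldots,k$ lie inside the prefix.

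Second, the root voltage $u_{s_{\kappa_k}}=u_s$ is fixed along the path, so no rescaling as in Lemma~\ref{lem:strong-subtree-inheritance} is needed.

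With these checks the sign of $(-1)^k$ is irrelevant: the argument only uses nonvanishing of the determinant plus its positive value at $z^0$.
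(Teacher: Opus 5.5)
Your proposal is correct and follows the paper's argument: $\det(\mathbf M_\ell^{[k]})=u_s^k>0$ at $z^0$, and along the common regular path $\mathbf M_\ell^{[k]}$ coincides with the local reduced Jacobian of the prefix subtree rooted at bus $0$ (where $v_\kappa=0$), whose determinant cannot vanish by the subtree version of \eqref{eq:subtree-det-reduced}. Your extra bookkeeping (ancestor sets lie inside the prefix, and the compatible injections enter only the affine right-hand side) makes explicit what the paper's three-line proof leaves implicit.
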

\begin{proof}
\rev{At $z^0$, $\mathbf M_\ell=u_s\mathbf I_n$, so $\det(\mathbf M_\ell^{[k]})=u_s^k>0$. Along the common regular path, $\mathbf M_\ell^{[k]}$ is the local matrix $\mathbf M_{\ell,\kappa}$ of the compatible prefix subtree, whose determinant cannot vanish by~\eqref{eq:subtree-det-reduced}. Hence all leading principal minors remain positive.}
\end{proof}
\begin{prop}[$M$-matrix via positive principal minors]
\label{thm:strong-region-M}
If $$z\in\mathcal R_{\mathrm{svs}}\cap\mathcal F,$$ then $\mathbf M_\ell$ is a nonsingular $M$-matrix and $\mathbf M_\ell^{-1}\in\R^{n\times n}_+$.
\end{prop}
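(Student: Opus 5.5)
The plan is to combine the endpoint $Z$-matrix property with the positivity of the leading principal minors, and then invoke the classical characterization of nonsingular $M$-matrices recalled in the Notations.

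First, since $z\in\mathcal R_{\mathrm{svs}}\subseteq\mathcal R$, the quadratic equation~\eqref{eq:DistFlow-matrix:nonlinear} together with $u\in\R^n_{++}$ gives $\ell\in\R^n_+$. Membership $z\in\mathcal F$ means Assumption~\ref{ass:ancestor-coupling} holds at $z$. Lemma~\ref{lem:Mell-Z} then shows that every off-diagonal entry of $\mathbf M_\ell$ is nonpositive, so $\mathbf M_\ell$ is a $Z$-matrix. This sign information is needed only at the end point $z$, not along any path. That is exactly the improvement over Proposition~\ref{thm:continuation-M}.

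Second, because $z\in\mathcal R_{\mathrm{svs}}$, Lemma~\ref{lemm::positive::Ml} yields $\det(\mathbf M_\ell^{[k]})>0$ for all $k=1,\ldots,n$. Here the topological ordering is essential: each leading principal submatrix coincides with the local reduced Jacobian of the prefix subtree rooted at $0$, because $v_\kappa=0$ in~\eqref{eq:subtree-reduced-rank-one}. The lemma already packages the continuation argument: the minors are positive at $z^0$, and they cannot vanish along the common regular path by~\eqref{eq:subtree-det-reduced}.

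Third, a $Z$-matrix whose leading principal minors are all positive is a nonsingular $M$-matrix. This is the definition adopted in the Notations, and it is equivalent to having a nonnegative inverse by~\cite[Thm.~6.2.3]{berman1994nonnegative} (condition on leading principal minors, e.g. N1/E-type equivalence). Hence $\mathbf M_\ell^{-1}\in\R^{n\times n}_+$.

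There is essentially no hard step once Lemmas~\ref{lem:Mell-Z} and~\ref{lemm::positive::Ml} are available. The only point needing care is that Lemma~\ref{lemm::positive::Ml} uses the determinants of the prefix subtree Jacobians. That is legitimate because their sign is fixed by path-connectedness within $\mathcal R_{\mathrm{svs}}$, independently of any $Z$-sign along the path.
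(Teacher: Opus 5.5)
Your proposal is correct and follows the paper's proof: the endpoint $Z$-matrix property from Lemma~\ref{lem:Mell-Z} (with $\ell\in\R^n_+$), the positive leading principal minors from Lemma~\ref{lemm::positive::Ml}, and then the nonsingular $M$-matrix characterization. For that last step, the paper cites condition E17 of~\cite[Thm.~6.2.3]{berman1994nonnegative}, which is the precise item to reference.
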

\begin{proof}
Since $\mathbf M_\ell$ is a $Z$-matrix (Lemma~\ref{lem:Mell-Z}) and all its leading principal minors are positive (Lemma~\ref{lemm::positive::Ml}), \rev{it} is a nonsingular $M$-matrix by~\cite[Thm.~6.2.3.E17]{berman1994nonnegative}.
\end{proof}
To compare the two certificates, we first record the subtree regularity implied by the $M$-matrix property itself.
\begin{lemm}[Subtree regularity under a nonsingular $M$-matrix]
\label{lem:Mell-subtree-regularity}
If $z\in\mathcal R$ and $\mathbf M_\ell$ is a nonsingular $M$-matrix, then
\begin{equation}\label{eq::subtree-regularity}
 \det(J_\kappa(z_\kappa;u_{s_\kappa}))\ne0,
 \qquad \forall \kappa\in\mathcal{S}(\mathcal G).
\end{equation}
\end{lemm}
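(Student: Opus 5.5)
We need $\det J_\kappa\neq 0$ for every $\kappa$. By the Schur-complement identity~\eqref{eq:subtree-det-reduced} applied to the subtree with its own local slack bus, this is equivalent to nonsingularity of $\mathbf M_{\ell,\kappa}$. The rank-one relation~\eqref{eq:subtree-reduced-rank-one} gives $\mathbf M_{\ell,\kappa}=A_\kappa+\ell_\kappa v_\kappa^\top$. The plan is therefore in two steps: first show that $A_\kappa$ is a nonsingular $M$-matrix, then show that the rank-one update preserves nonsingularity.

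\textbf{Step 1 (principal submatrix).} $A_\kappa$ is a principal submatrix of the nonsingular $M$-matrix $\mathbf M_\ell$. Principal submatrices of nonsingular $M$-matrices are again nonsingular $M$-matrices, since all principal minors of $\mathbf M_\ell$ are positive~\cite[Thm.~6.2.3]{berman1994nonnegative}. Hence $A_\kappa$ is a $Z$-matrix with $A_\kappa^{-1}\in\R^{n_\kappa\times n_\kappa}_+$. If $s_\kappa=0$, then $v_\kappa=0$ and we are done.

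\textbf{Step 2 (rank-one correction).} For $s_\kappa\neq0$, the matrix determinant lemma gives
\[
\det\mathbf M_{\ell,\kappa}=\det(A_\kappa)\bigl(1+v_\kappa^\top A_\kappa^{-1}\ell_\kappa\bigr).
\]
Every entry of $v_\kappa$ is a sum of $\zeta_{kj}\ge0$, so $v_\kappa\ge0$. Also $\ell_\kappa\ge0$, because $z\in\mathcal R$ and~\eqref{eq:DistFlow-matrix:nonlinear} imply $\ell\in\R^n_+$. Together with $A_\kappa^{-1}\ge0$ from Step 1, this gives $v_\kappa^\top A_\kappa^{-1}\ell_\kappa\ge0$. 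The bracket is therefore at least $1$, and $\det\mathbf M_{\ell,\kappa}\ge\det A_\kappa>0$.

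\textbf{Main obstacle.} The step needing the most care is confirming that the local Jacobian $J_\kappa$ at the compatible point $(z_\kappa;u_{s_\kappa})$ has the same structure as the full-network Jacobian. Specifically, we must check that~\eqref{eq:subtree-det-reduced} holds with $\mathbf M_{\ell,\kappa}$ built on $\mathcal G_\kappa$, which needs a unit-determinant affine block for the subtree's own reduced incidence matrix, and that~\eqref{eq:subtree-reduced-rank-one} is exact rather than approximate. I would verify the latter entrywise from~\eqref{eq:M-entry}. On the subtree, the ancestor sums over $\mathcal N^{\anc}_j$ lose exactly the terms $k\in\mathcal N^{\anc+}_{s_\kappa}$, and these terms appear only in rows multiplied by $\ell_i$. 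Restoring them adds $\ell_i[v_\kappa]_j$ in every entry: the diagonal entries, the descendant entries, and the "otherwise" entries, where the common-ancestor set shrinks by $\mathcal N^{\anc+}_{s_\kappa}$. In the ancestor case $i\in\mathcal N^{\anc}_j$, the correction also arises from the sum over $\mathcal N_i^{\anc+}$.

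The sign of the correction matters. Removing upstream impedance increases $\mathbf M_{\ell,\kappa}$ relative to $A_\kappa$, because the removed terms enter~\eqref{eq:M-entry} with a minus sign. This gives the $+\ell_\kappa v_\kappa^\top$ in~\eqref{eq:subtree-reduced-rank-one}, and that plus sign is exactly what makes Step 2 work.
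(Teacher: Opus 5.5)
Your proposal is correct and follows the paper's proof essentially verbatim. The principal submatrix $A_\kappa$ of the nonsingular $M$-matrix $\mathbf M_\ell$ has a nonnegative inverse, and the matrix determinant lemma applied to $\mathbf M_{\ell,\kappa}=A_\kappa+\ell_\kappa v_\kappa^\top$ with $\ell_\kappa,v_\kappa\ge0$ gives $\det\mathbf M_{\ell,\kappa}\ge\det A_\kappa>0$, after which the subtree version of~\eqref{eq:subtree-det-reduced} yields the claim. Your entrywise check of the rank-one relation, which the paper only asserts, is also right: passing from $A_\kappa$ to the local matrix removes the terms $k\in\mathcal N^{\anc+}_{s_\kappa}$, which enter~\eqref{eq:M-entry} with a minus sign, so every entry increases by $\ell_i[v_\kappa]_j$.
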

\begin{proof}
For each $\kappa\in\mathcal{S}(\mathcal G)$, the principal submatrix $A_\kappa$ is also a nonsingular $M$-matrix, and hence $A_\kappa^{-1}\in\R^{n_\kappa\times n_\kappa}_+$ by~\cite[Thm 6.2.4]{berman1994nonnegative}. Moreover, $\ell_\kappa\in\R^{n_\kappa}_+$ and $v_\kappa\in\R^{n_\kappa}_+$. The matrix determinant lemma applied to \eqref{eq:subtree-reduced-rank-one} gives
\[
 \det(\mathbf M_{\ell,\kappa})
 =\det(A_\kappa)
       \bigl(1+v_\kappa^\top A_\kappa^{-1}\ell_\kappa\bigr)>0.
\]
The subtree version of~\eqref{eq:subtree-det-reduced} now gives~\eqref{eq::subtree-regularity}.
\end{proof}

\rev{Applying Proposition~\ref{thm:continuation-M} and Lemma~\ref{lem:Mell-subtree-regularity} at every point of a path in $\operatorname{comp}_{z^0}(\mathcal R\cap\mathcal F)$ shows that the path lies in the set of Definition~\ref{def:strong-voltage-regularity}, which gives the following inclusions.}
\begin{col}
\label{col:continuation-strong-region}
\begin{equation*}
\operatorname{comp}_{z^0}(\mathcal R\cap\mathcal F)
 \subseteq\mathcal R_{\mathrm{svs}}\cap\mathcal F
 \subseteq\mathcal R_{\mathrm{vs}}\cap\mathcal F.
\end{equation*}
\end{col}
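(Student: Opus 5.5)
The first inclusion is the substantive one. I would prove it by showing that the set $\mathcal R\cap\mathcal F$ lies inside the set $\mathcal R':=\{z\in\mathcal R\,|\,\det(J_\kappa)\ne0\ \forall\kappa\in\mathcal S(\mathcal G)\}$ along any path that witnesses membership in $\operatorname{comp}_{z^0}(\mathcal R\cap\mathcal F)$. Fix $z\in\operatorname{comp}_{z^0}(\mathcal R\cap\mathcal F)$ and a continuous path $z(\tau)$, $\tau\in[0,1]$, from $z^0$ to $z$ with $z(\tau)\in\mathcal R\cap\mathcal F$ for every $\tau$.

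For each intermediate point, the point $z(\tau)$ itself belongs to $\operatorname{comp}_{z^0}(\mathcal R\cap\mathcal F)$: the restricted path $z([0,\tau])$ joins it to $z^0$ inside $\mathcal R\cap\mathcal F$. Proposition~\ref{thm:continuation-M} therefore applies at $z(\tau)$, and $\mathbf M_\ell(z(\tau))$ is a nonsingular $M$-matrix. Since $z(\tau)\in\mathcal R$, Lemma~\ref{lem:Mell-subtree-regularity} gives $\det J_\kappa\ne0$ for all $\kappa\in\mathcal S(\mathcal G)$ at $z(\tau)$. Hence the entire path lies in $\mathcal R'$ and starts at $z^0$. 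By Definition~\ref{def:strong-voltage-regularity}, $z\in\operatorname{comp}_{z^0}(\mathcal R')=\mathcal R_{\mathrm{svs}}$.

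Membership of $z$ in $\mathcal F$ is immediate from $z=z(1)\in\mathcal F$. This gives $z\in\mathcal R_{\mathrm{svs}}\cap\mathcal F$.

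The second inclusion is set-theoretic. Since $\mathcal R'\subseteq\mathcal R$, any path in $\mathcal R'$ from $z^0$ is also a path in $\mathcal R$, so $\mathcal R_{\mathrm{svs}}\subseteq\mathcal R_{\mathrm{vs}}$. Intersecting both sides with $\mathcal F$ gives the claim.

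The only delicate point is that Proposition~\ref{thm:continuation-M} must be invoked at every intermediate point, not just at the endpoint. This is what forces the use of path-connected components, and it is handled by the observation that sub-paths remain in $\mathcal R\cap\mathcal F$. I would also check that $z^0\in\mathcal R'$; this holds because at $z^0$ every local matrix $\mathbf M_{\ell,\kappa}=u_s\mathbf I$.
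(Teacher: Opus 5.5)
Your proposal is correct and follows the paper's own argument. The paper also applies Proposition~\ref{thm:continuation-M} and Lemma~\ref{lem:Mell-subtree-regularity} at every point of a witnessing path in $\mathcal R\cap\mathcal F$, which places the whole path in the set of Definition~\ref{def:strong-voltage-regularity}, and it obtains the second inclusion from the set-theoretic fact $\mathcal R_{\mathrm{svs}}\subseteq\mathcal R_{\mathrm{vs}}$. Your sub-path observation, that each intermediate point is itself in $\operatorname{comp}_{z^0}(\mathcal R\cap\mathcal F)$, makes explicit a step the paper leaves implicit.
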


Before proceeding, we record a necessary condition for  $z\in\mathcal R_{\mathrm{svs}}$.
\begin{lemm}
\label{lem:strong-branch-limit}
If $z\in\mathcal R_{\mathrm{svs}}$, then
\begin{equation*}
u_j-\abs{\zz_j}^2\ell_j>0,\qquad \forall j\in\mathcal N.
\end{equation*}
\end{lemm}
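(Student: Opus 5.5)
Each branch $j$ forms a two-bus compatible subtree $\kappa_j\in\mathcal S(\mathcal G)$, with local slack bus $s_{\kappa_j}=\pi(j)$ and the single non-slack bus $j$. The plan is to compute its $1\times1$ local current Jacobian $\mathbf M_{\ell,\kappa_j}$ explicitly. Then we use Lemma~\ref{lem:strong-subtree-inheritance} together with the sign argument of Lemma~\ref{lemm::positive::Ml} to show that this scalar stays positive.

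First, we apply~\eqref{eq:M-entry} to the subtree $\kappa_j$ with its own slack bus. Since bus $j$ has no strict non-slack ancestors inside $\kappa_j$, the ancestor sum vanishes and
\[
\mathbf M_{\ell,\kappa_j}=u_j-\abs{\zz_j}^2\ell_j .
\]
We can cross-check this against the rank-one correction~\eqref{eq:subtree-reduced-rank-one}. The diagonal entry $[\mathbf M_\ell]_{jj}=u_j-\ell_j(\abs{\zz_j}^2+2\sum_{k\in\mathcal N_j^{\anc}}\zeta_{kj})$ plus $\ell_j[v_{\kappa_j}]_j=2\ell_j\sum_{k\in\mathcal N_{\pi(j)}^{\anc+}}\zeta_{kj}$ gives the same expression, because $\mathcal N_{\pi(j)}^{\anc+}=\mathcal N_j^{\anc}$.

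Second, we use the subtree version of~\eqref{eq:subtree-det-reduced}: $\det J_{\kappa_j}=-\mathbf M_{\ell,\kappa_j}$. Along the common regular path $z(t)$ from $z^0$ to $z$ guaranteed by Definition~\ref{def:strong-voltage-regularity}, $\det J_{\kappa_j}\neq0$ holds at every point. Hence the continuous scalar $u_j(t)-\abs{\zz_j}^2\ell_j(t)$ never vanishes. At the flat solution it equals $u_s>0$, so by the intermediate value theorem it remains positive up to $t=1$, which is the claim.

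The only point needing care is continuity of the compatible restriction along the path, including the local slack voltage $u_{\pi(j)}(t)$ and the adjusted injections. This follows because these quantities are continuous functions of $z(t)$. Alternatively, we can invoke Lemma~\ref{lem:strong-subtree-inheritance} directly, which gives $z_{\kappa_j}\in\mathcal R_{\mathrm{svs},\kappa_j}$ and reduces the claim to the single-branch, $n=1$ case of the same argument.
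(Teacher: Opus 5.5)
Your proposal is correct and follows the paper's proof essentially verbatim. The paper also passes to the compatible single-edge subtree $\{\pi(j),j\}$, identifies its reduced Jacobian as the scalar $u_j-\abs{\zz_j}^2\ell_j$, and concludes from its value $u_s>0$ at $z^0$ together with its nonvanishing along the common regular path via~\eqref{eq:subtree-det-reduced}; your rank-one cross-check and continuity remark are harmless details that the paper leaves implicit.
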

\begin{proof}
The compatible single-edge subtree $\{\pi(j),j\}$ has reduced Jacobian $u_j-|\zz_j|^2\ell_j$. It starts at $u_s>0$ and cannot vanish on the common path, by~\eqref{eq:subtree-det-reduced}.
\end{proof}
This branchwise condition \rev{coincides} with the sufficient power flow \rev{regularity condition} for a two-bus \rev{system}~\eqref{eq:wang-condition}.
\rev{It} also appears as Condition~C9.7 in the global optimality analysis via \rev{a} Lyapunov-like function in~\cite[Thm.~9.7]{steven2025PSA}.
\subsection{Voltage and Total-Loss Certificate}
\label{sec:gan-matrix-certificate}
\rev{The region $\mathcal F$ signs the whole inverse $\mathbf M_\ell^{-1}$. When reverse flows violate Assumption~\ref{ass:ancestor-coupling}, a weaker conclusion still suffices for the exactness arguments: we now sign the matrix $\mathbf K_u$ and the total-loss responses without requiring $\mathbf M_\ell^{-1}\in\R^{n\times n}_{+}$.}

The matrix $\hat{\mathbf U}$ is upper triangular by~\eqref{eq::hatU}. If $u\in\R^n_{++}$, we have
\begin{equation*}
 \det(\hat{\mathbf U})=\prod_{j\in\mathcal N}u_j>0.
\end{equation*}
Then, we \rev{factor} $$\mathbf M_\ell=\mathbf W\hat{\mathbf U},$$ where
\[
 \mathbf W:=\mathbf I-\mathbf L\hat C^{-\top}\hat{\mathbf Z}\hat{\mathbf U}^{-1}.
\]

\begin{ass}
\label{ass:voltage-ancestor-coupling}
At $z\in\mathcal R$, assume
\begin{subequations}\label{eq:T}
\begin{align}
 \mathbf T_r:=&-\hat C^{-1}\mathbf R\hat{\mathbf U}^{-1}\in\R^{n\times n}_+,\\
 \mathbf T_x:=&-\hat C^{-1}\mathbf X\hat{\mathbf U}^{-1}\in\R^{n\times n}_+.
\end{align}
\end{subequations}
\end{ass}
Set
\begin{equation}\label{eq::C}
\mathcal C:=\{z\,|\,\text{Assumption~\ref{ass:voltage-ancestor-coupling} holds}\}.
\end{equation}
Gan et al.'s path-product C1 condition~\cite[Thm.~1]{gan2014exact} implies \rev{Assumption}~\ref{ass:voltage-ancestor-coupling}. For completeness, we provide \rev{the} proof in Appendix~\ref{app:proof-gan-C1-ancestor-sums}.
\begin{lemm}
\label{lemma:gan-C1-ancestor-sums}
Suppose $\mathscr r,\mathscr x\in\R^{n}_{++}$ and  $z\in\mathcal R$ satisfy C1 from~\cite[Thm.~1]{gan2014exact}. Then $z\in\mathcal C$.
\end{lemm}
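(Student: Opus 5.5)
The plan is to compute the columns of $\mathbf T_r$ and $\mathbf T_x$ explicitly along root paths, and to show that each column is a product of $2\times2$ matrices of exactly the type appearing in Gan et al.'s condition C1, applied to the vector $(\rr_j,\xx_j)$. Positivity of these products is what C1 asserts.

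\emph{Step 1: a recursion for $\hat{\mathbf U}^{-1}$.} Fix a column index $j$ and let $y:=\hat{\mathbf U}^{-1}\mathbf e_j$. By~\eqref{eq::hatU}, $\hat{\mathbf U}$ is upper triangular with diagonal $u$, and its only nonzero off-diagonal entries lie in rows $k\in\mathcal N_m^{\anc}$. Hence $y$ is supported on $\mathcal N_j^{\anc+}$, and $y_j=1/u_j$. Write $w_m:=(\rr_m,\xx_m)\in\R^2$ and $s_k:=\sum_m w_m y_m$, where the sum runs over $m$ on the path from $k$ down to $j$ inclusive. Then the $k$th row of $\hat{\mathbf U}y=\mathbf e_j$ reads
\[
u_ky_k=2(P_k,Q_k)\,s_{k'},
\]
where $k'$ is the child of $k$ on the path to $j$. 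Consequently
\[
s_k=A_k s_{k'},\qquad A_k:=\mathbf I_2+\tfrac{2}{u_k}\,w_k(P_k,Q_k),
\]
with terminal value $s_j=w_j/u_j$.

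\emph{Step 2: identify the entries of $\mathbf T$.} By Lemma~\ref{lem:path-CfC}, $-\hat C^{-1}$ has ones exactly at the pairs $(i,k)$ with $i\in\mathcal N_k^{\anc+}$. Therefore, for $i\in\mathcal N_j^{\anc+}$,
\[
([\mathbf T_r]_{ij},[\mathbf T_x]_{ij})^\top=\sum_{k\,:\,i\in\mathcal N_k^{\anc+}}w_ky_k=s_i=A_iA_{i_1}\cdots A_{j'}\,\frac{w_j}{u_j},
\]
where $i,i_1,\dots,j'$ are the buses on the path from $i$ to the parent $j'$ of $j$. All other entries of $\mathbf T_r$ and $\mathbf T_x$ vanish. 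The diagonal case $i=j$ gives $w_j/u_j\ge0$. It thus suffices to show that every such path product applied to $w_j$ is nonnegative.

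\emph{Step 3: compare with C1; this is the main obstacle.} Gan et al.\ orient lines toward the root and state C1 with matrices $\underline A_k=\mathbf I-\tfrac{2}{\underline u_k}w_k(\hat S_k^{+})^\top$. Here $\underline u_k$ is the lower voltage bound and $\hat S_k^+$ is the positive part of the linearized (lossless) flow in their orientation, which is the negative of ours. C1 requires $\underline A_{i}\cdots\underline A_{j'}w_j>0$ for every subpath. The difference between the two matrices is
\[
A_k-\underline A_k=2w_kc_k^\top,\qquad c_k:=(P_k,Q_k)/u_k+\hat S_k^{+}/\underline u_k .
\]
I would show $c_k\ge0$ using two facts. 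First, $\underline u_k\le u_k$. Second, $\hat S_k^+\ge\max\{-(P_k,Q_k),0\}$, because the linearized flow dominates the true reverse flow: losses $\rr\ell,\xx\ell\ge0$ only enlarge upstream draw. This second fact is the delicate point; I would check it carefully against Gan's definition of $\hat S$ via~\eqref{eq:DistFlow-matrix:P}--\eqref{eq:DistFlow-matrix:Q}.

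\emph{Step 4: induction from the leaf end.} Given $c_k\ge0$ and $w_k\ge0$, induct from the $j$ end of the path. Suppose the partial product $x:=\underline A_{k'}\cdots\underline A_{j'}w_j$ and its counterpart $x':=A_{k'}\cdots A_{j'}w_j$ satisfy $x'\ge x>0$, where positivity of $x$ is C1 applied to that subpath. Then
\[
A_kx'=\underline A_kx'+2w_k(c_k^\top x')\ge\underline A_kx'.
\]
Moreover $\underline A_kx'\ge\underline A_kx$, because $\underline A_k$ has the form $\mathbf I$ minus a nonnegative rank-one matrix only when $\hat S_k^+\ge0$. Its off-diagonal entries may still be negative, so monotonicity in $x$ is not automatic. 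If this step fails, I would instead apply the rank-one comparison directly to the vector $x'$, i.e., show $A_kx'\ge\underline A_kx'>0$ by invoking C1 in the perturbed form. Concluding positivity of $x'$ is the crux, and the strictness $\rr,\xx\in\R^n_{++}$ would be used here to keep every partial product strictly positive. The result is $\mathbf T_r,\mathbf T_x\in\R^{n\times n}_+$, i.e., $z\in\mathcal C$.
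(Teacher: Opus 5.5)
Your Steps~1--3 match the paper's proof. The column recursion for $\hat{\mathbf U}^{-1}$, the identification of $([\mathbf T_r]_{ij},[\mathbf T_x]_{ij})$ with $A_i\cdots A_{j'}w_j/u_j$, and the splitting $A_k=\underline A_k+2w_kc_k^\top$ with $c_k\ge0$ are exactly the ingredients used there. For $c_k\ge0$ you need the injection bounds as well as nonnegative losses. The affine balances give $P=\hat C^{-1}p+\Gamma\mathbf R\ell\ge\hat C^{-1}\overline p$, because $\hat C^{-1}\le0$, $p\le\overline p$, $\Gamma\ge0$ and $\ell\ge0$. The same holds for $Q$.

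The genuine gap is Step~4, which is the substance of the lemma and which you leave open. The hypothesis $x'\ge x>0$ does not propagate. The matrix $\underline A_k=\mathbf I-\tfrac{2}{\underline u_k}w_k(\hat S_k^+)^\top$ has a negative off-diagonal entry whenever $\hat S_k^+\neq0$, so it is not order preserving, and $x'\ge x$ does not yield $\underline A_kx'\ge\underline A_kx$. Your fallback, ``$\underline A_kx'>0$ by C1 in the perturbed form'', is also unsupported. C1 asserts positivity of $\underline A$-products applied only to the specific vectors $w_b$, not to an arbitrary positive vector.

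The missing idea is to expand $\prod_k(\underline A_k+2w_kc_k^\top)\,w_j$ over the sets of positions where the rank-one part is selected. Let the selected buses be $a_1,\dots,a_r$, ordered from upstream to downstream, and set $a_{r+1}:=j$. The corresponding term is
\[
\Bigl(\prod_{m\in[i,a_1)}\underline A_m\Bigr)w_{a_1}\,\prod_{s=1}^{r}2c_{a_s}^\top\Bigl(\prod_{m\in(a_s,a_{s+1})}\underline A_m\Bigr)w_{a_{s+1}},
\]
where $[i,a_1)$ denotes the path buses from $i$ through the parent of $a_1$, and $(a_s,a_{s+1})$ those strictly between $a_s$ and $a_{s+1}$. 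The vector factor is a C1 vector, hence positive. Each scalar is nonnegative, because $c_{a_s}\ge0$ and the bracketed vector is again a C1 vector of a sub-segment. The term with no selection is the full C1 vector and is strictly positive.

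Equivalently, strengthen your induction hypothesis: $x'$ is a nonnegative combination of the C1 vectors $(\prod_{m\in[k',b)}\underline A_m)w_b$, with $b$ on the segment and coefficient $1$ at $b=j$. Then $\underline A_k$ maps each generator to another C1 vector. The correction adds $2(c_k^\top x')\,w_k$, a nonnegative multiple of the empty-product C1 vector $w_k$. This is the path-product comparison of Gan et al.\ that the paper invokes. It also shows that $x'\ge x$ does hold, but only as a consequence of the expansion, not as a propagable induction hypothesis.
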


\begin{rem}\label{rem::sufficient::C}
Under the assumptions of Lemma~\ref{lemma:gan-C1-ancestor-sums}, each of the conditions (1)--(5) in~\cite[Thm.~4]{gan2014exact} implies $z\in\mathcal C$. These include conditions involving constant, nondecreasing, or nonincreasing resistance-to-reactance ratios from the slack bus toward the leaves. \rev{Since C1 is evaluated on injection and voltage bounds rather than at an operating point, it certifies $z\in\mathcal C$ at every feasible point within those bounds, before any OPF is solved.}
\end{rem}

\begin{rem}[\rev{Inheritance of $\mathcal C$}]\label{rem::C-inheritance}
The entries of $\hat C^{-1}$ and $\hat{\mathbf U}^{-1}$ are supported on
ancestor paths. For $i,j\in\mathcal N_\kappa$, the path from $i$ to $j$ lies in
the connected subtree $\kappa$, and compatible restriction leaves
$(P,Q,u,\ell)$ unchanged. Hence $\mathbf T_{r,\kappa}$ and $\mathbf T_{x,\kappa}$
are principal submatrices of $\mathbf T_r$ and $\mathbf T_x$, and
$z\in\mathcal C$ implies $z_\kappa\in\mathcal C_\kappa$ for every
$\kappa\in\mathcal S(\mathcal G)$. In particular, $\mathcal C_\kappa$ follows
from the path products of C1 on the segments within $\kappa$ alone\rev{, evaluated with the full-network injection bounds}.
\end{rem}

\begin{lemm}
\label{lem:gan-triangular-signs}
If $z\in\mathcal C$, then
\[
 \hat{\mathbf Z}\hat{\mathbf U}^{-1}\in\R^{n\times n}_{-},\;\mathrm{and}\;
 \hat{\mathbf U}^{-\top}\zeta\in\R^n_{+}\quad\mathrm{with}\;\zeta\in\{\mathscr r,\mathscr x\}.
\]
\end{lemm}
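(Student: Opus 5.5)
The plan is to express both quantities in Lemma~\ref{lem:gan-triangular-signs} algebraically through $\mathbf T_r$ and $\mathbf T_x$. Once they are rewritten as products of entrywise signed matrices, Assumption~\ref{ass:voltage-ancestor-coupling} gives the signs directly. No continuation or spectral argument should be needed. The only structural facts I expect to use are $\Gamma=-\mathbf I-\hat C^{-1}$ from~\eqref{eq::ancestorMatrix}, $\hat C=\hat C_f-\mathbf I$ with $\hat C_f\in\R^{n\times n}_+$, and $\hat C^{-\top}c_0=-\mathbf 1$ from~\eqref{eq::Cc0}.

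For the first claim, I substitute $\Gamma=-\mathbf I-\hat C^{-1}$ into $\hat{\mathbf Z}$. Since $\mathbf Z^2=\mathbf R^2+\mathbf X^2$, this gives
\[
\hat{\mathbf Z}=\mathbf R^2+\mathbf X^2+2\mathbf R\hat C^{-1}\mathbf R+2\mathbf X\hat C^{-1}\mathbf X .
\]
Right-multiplying by $\hat{\mathbf U}^{-1}$, which exists because $\det\hat{\mathbf U}=\prod_j u_j>0$, the cross terms become $-2\mathbf R\mathbf T_r-2\mathbf X\mathbf T_x$. For the diagonal terms, I invert the definition of $\mathbf T_r$: since $\hat C\mathbf T_r=-\mathbf R\hat{\mathbf U}^{-1}$, we have $\mathbf R\hat{\mathbf U}^{-1}=(\mathbf I-\hat C_f)\mathbf T_r$, so $\mathbf R^2\hat{\mathbf U}^{-1}=\mathbf R(\mathbf I-\hat C_f)\mathbf T_r$. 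The analogous identity holds for $\mathbf X$. Collecting terms should give
\[
\hat{\mathbf Z}\hat{\mathbf U}^{-1}=-\mathbf R(\mathbf I+\hat C_f)\mathbf T_r-\mathbf X(\mathbf I+\hat C_f)\mathbf T_x .
\]
Each factor here is entrywise nonnegative: $\mathbf R,\mathbf X\ge0$, $\hat C_f\ge0$, and $\mathbf T_r,\mathbf T_x\ge0$ by Assumption~\ref{ass:voltage-ancestor-coupling}. Hence the product lies in $\R^{n\times n}_-$.

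For the second claim, I write $\mathscr r=\mathbf R\mathbf 1$ and replace $\mathbf 1$ by $-\hat C^{-\top}c_0$. This gives
\[
\hat{\mathbf U}^{-\top}\mathscr r=-\hat{\mathbf U}^{-\top}\mathbf R\hat C^{-\top}c_0=\mathbf T_r^\top c_0 .
\]
The vector $c_0$ has entries $+1$ exactly at the children of the slack bus and $0$ elsewhere, so $c_0\in\R^n_+$. Since $\mathbf T_r\ge0$, the result is nonnegative. The same computation with $\mathbf X$ and $\mathbf T_x$ handles $\mathscr x$.

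The main obstacle is bookkeeping rather than conceptual. The step most prone to error is the diagonal term $\mathbf R^2\hat{\mathbf U}^{-1}$, which is not directly a multiple of $\mathbf T_r$. The trick is to pass through $\hat C\mathbf T_r$ and use $\hat C=\hat C_f-\mathbf I$. I would double-check the resulting sign combination $-(\mathbf I+\hat C_f)$ entrywise on a two-bus line before finalizing.
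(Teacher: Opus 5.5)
Your proposal is correct and follows essentially the same route as the paper. You reach the same identity $\hat{\mathbf Z}\hat{\mathbf U}^{-1}=-\mathbf R(\mathbf I+\hat C_f)\mathbf T_r-\mathbf X(\mathbf I+\hat C_f)\mathbf T_x$; the paper gets it in one step from $\mathbf I+2\Gamma=(\mathbf I+\hat C_f)(-\hat C^{-1})$, which is equivalent to your substitution $\Gamma=-\mathbf I-\hat C^{-1}$ combined with $\hat C=\hat C_f-\mathbf I$. For the second claim, writing $\mathbf 1=-\hat C^{-\top}c_0$ to obtain $\mathbf T_\zeta^\top c_0$ is exactly the paper's argument, and your explicit remark that $c_0\ge0$ fills in a step the paper leaves implicit.
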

\begin{revblock}
\begin{proof}
Since $\mathbf I+2\Gamma=(\mathbf I+\hat C_f)(-\hat C^{-1})$, the definitions give
\[
 \hat{\mathbf Z}\hat{\mathbf U}^{-1}
 =-\mathbf R(\mathbf I+\hat C_f)\mathbf T_r
  -\mathbf X(\mathbf I+\hat C_f)\mathbf T_x\in\R^{n\times n}_{-}.
\]
For $\zeta\in\{\rr,\xx\}$, use $-\hat C^{-\top}c_0=\mathbf1$ to obtain $\hat{\mathbf U}^{-\top}\zeta=\mathbf T_\zeta^\top c_0\in\R^n_{+}$.
\end{proof}
\end{revblock}
\rev{These two signs are all that the factorization $\mathbf M_\ell=\mathbf W\hat{\mathbf U}$ requires: the first makes $\mathbf W$ a $Z$-matrix, and the second will sign the weighted total losses.}

\begin{prop}
\label{prop:gan-transformed-M}
If $z\in\mathcal R_{\mathrm{svs}}\cap \mathcal C$, then
\[\mathbf W =\mathbf I - \mathbf L \hat{C}^{-\top}\hat{\mathbf Z}\hat{\mathbf U}^{-1}\]
is a nonsingular $M$-matrix, and
\[
\mathbf K_u\in\R^{n\times n}_{+},\qquad
\mathbf M_\ell^{-\top}\mathscr r\in\R^{n}_{+},\qquad
\mathbf M_\ell^{-\top}\mathscr x\in\R^{n}_{+}.
\]
Moreover, for $\alpha,\beta\ge0$,
\begin{equation}\label{eq:gan-strict-weighted-inverse}
\xi:=\alpha\mathscr r+\beta\mathscr x\in\R_{++}^n\quad\Longrightarrow\quad\mathbf M_\ell^{-\top}\xi\in\R_{++}^n.
\end{equation}
\end{prop}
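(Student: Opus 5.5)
The plan is to mirror the proof of Proposition~\ref{thm:strong-region-M}, but to apply it to $\mathbf W$ instead of $\mathbf M_\ell$. There are three steps: show that $\mathbf W$ is a $Z$-matrix, show that its leading principal minors are positive, and then deduce the signs from the factorization $\mathbf M_\ell=\mathbf W\hat{\mathbf U}$.

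\textbf{$Z$-matrix property of $\mathbf W$.} By Lemma~\ref{lem:gan-triangular-signs}, $\hat{\mathbf Z}\hat{\mathbf U}^{-1}\in\R^{n\times n}_-$. Lemma~\ref{lem:incidence-matrix} gives $-\hat C^{-\top}\in\R^{n\times n}_+$, and we have $\mathbf L\ge0$. Hence
\[
\mathbf L\hat C^{-\top}\hat{\mathbf Z}\hat{\mathbf U}^{-1}=\mathbf L(-\hat C^{-\top})(-\hat{\mathbf Z}\hat{\mathbf U}^{-1})\in\R^{n\times n}_+ ,
\]
so $\mathbf W=\mathbf I-N$ with $N\ge0$. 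This makes $\mathbf W$ a $Z$-matrix.

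\textbf{Positive leading principal minors.} I would exploit triangularity. Both $\hat{\mathbf U}$ and $\hat{\mathbf U}^{-1}$ are upper triangular, with positive diagonal $u$. For an upper triangular $\hat{\mathbf U}$, the leading $k\times k$ block of $\mathbf W\hat{\mathbf U}$ equals $\mathbf W^{[k]}\hat{\mathbf U}^{[k]}$. Therefore
\[
\det(\mathbf W^{[k]})=\det(\mathbf M_\ell^{[k]})\Big/\prod_{j\le k}u_j>0
\]
by Lemma~\ref{lemm::positive::Ml}. This step only requires $z\in\mathcal R_{\mathrm{svs}}$. The $Z$-matrix property and positive leading minors together make $\mathbf W$ a nonsingular $M$-matrix by \cite[Thm.~6.2.3.E17]{berman1994nonnegative}, so $\mathbf W^{-1}\ge0$. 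One point needs checking here: the block identity uses only that $\hat{\mathbf U}$ is upper triangular, since then $[\mathbf W\hat{\mathbf U}]_{1:k,1:k}=\mathbf W_{1:k,1:k}\hat{\mathbf U}_{1:k,1:k}$. This is the step I expect to need the most care, because $\mathbf W$ itself is not triangular.

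\textbf{Signs of $\mathbf K_u$ and the weighted inverses.} Since $\mathbf M_\ell^{-1}=\hat{\mathbf U}^{-1}\mathbf W^{-1}$, we can write
\[
\mathbf K_u=\hat C^{-\top}\hat{\mathbf Z}\hat{\mathbf U}^{-1}\mathbf W^{-1}=(-\hat C^{-\top})(-\hat{\mathbf Z}\hat{\mathbf U}^{-1})\mathbf W^{-1}\ge0 .
\]
Similarly, $\mathbf M_\ell^{-\top}\zeta=\mathbf W^{-\top}(\hat{\mathbf U}^{-\top}\zeta)$. Here $\hat{\mathbf U}^{-\top}\zeta\ge0$ by Lemma~\ref{lem:gan-triangular-signs}, and $\mathbf W^{-\top}\ge0$, so $\mathbf M_\ell^{-\top}\zeta\ge0$ for $\zeta\in\{\rr,\xx\}$.

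\textbf{Strict positivity in \eqref{eq:gan-strict-weighted-inverse}.} Since $\mathbf W^{-\top}=\sum_k N^{\top k}\ge\mathbf I$ (Neumann series, valid because $\rho(N)<1$), it suffices to show $\hat{\mathbf U}^{-\top}\xi\in\R^n_{++}$. Write $w:=\hat{\mathbf U}^{-\top}\xi=\mathbf T_\xi^\top c_0$, where $\mathbf T_\xi:=-\hat C^{-1}(\alpha\mathbf R+\beta\mathbf X)\hat{\mathbf U}^{-1}\ge0$. Then $w$ solves $\hat{\mathbf U}^\top w=\xi$, which is lower triangular. Component $j$ reads
\[
u_jw_j=\xi_j+2\sum_{i\in\mathcal N_j^{\anc}}(\rr_jP_i+\xx_jQ_i)\,w_i .
\]
That identity does not immediately give strictness. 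The cleaner route uses the formula $w=\mathbf T_\xi^\top c_0$: entry $j$ sums row entries of $\mathbf T_\xi$ over the children of the slack. The diagonal contribution from the root-path element $k=j$ of $-\hat C^{-1}$ is $\xi_j/u_j>0$, and every other term is nonnegative. The resulting lower bound $w_j\ge\xi_j/u_j>0$ is the step I would verify most carefully. If it fails, I would instead bound $\mathbf W^{-\top}w\ge w$ and use the strictly positive diagonal $\xi_j/u_j$ of the factor $\diag{\xi}\hat{\mathbf U}^{-1}$ after the nonnegative factor $-\hat C^{-1}\ge\mathbf I$.
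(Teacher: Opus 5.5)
Your $Z$-matrix argument is correct and matches the paper's proof. So are the prefix identity $\mathbf W^{[k]}\hat{\mathbf U}^{[k]}=\mathbf M_\ell^{[k]}$ combined with Lemma~\ref{lemm::positive::Ml}, and the factorization signs for $\mathbf K_u$ and $\mathbf M_\ell^{-\top}\zeta$. The gap is in the strict statement~\eqref{eq:gan-strict-weighted-inverse}.

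\textbf{The reduction asks for something false on $\mathcal C$.} You use $\mathbf W^{-\top}\ge\mathbf I$ to reduce strictness to $w=\hat{\mathbf U}^{-\top}\xi\in\R^n_{++}$. But in $w_j=\sum_{k\in\mathcal N_j^{\anc+}}\xi_k[\hat{\mathbf U}^{-1}]_{kj}$ the terms with $k\ne j$ are not nonnegative. $\mathcal C$ signs only the path sums $\mathbf T_r,\mathbf T_x$, not $\hat{\mathbf U}^{-1}$ itself, and $\hat{\mathbf U}^{-1}$ has negative entries as soon as there is reverse flow. Your bound $w_j\ge\xi_j/u_j$ holds on $\mathcal F_s$, where $\hat{\mathbf U}^{-1}\ge0$, but not on $\mathcal C$.

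Concretely, on the path $0\to1\to2$,
\[
 w_2=[\mathbf T_\xi]_{12}=\frac{\xi_2u_1+2\xi_1(\rr_2P_1+\xx_2Q_1)}{u_1u_2}.
\]
Take $\xi=\rr$, $\rr_1=\xx_1=\xx_2=1$, $\rr_2=0.01$, $P_1=P_2=Q_2=0$ and $Q_1=-0.005\,u_1$. This is a small reverse reactive flow near the flat point, so~\eqref{eq:wang-condition} holds with a wide margin and $z\in\mathcal R_{\mathrm{svs}}$. Also $[\mathbf T_x]_{12}=(\xx_2-\xx_1\rr_2/\rr_1)/u_2>0$, so $z\in\mathcal C$. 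Yet $w_2=0$, while $\rr\in\R^2_{++}$.

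Your fallback fails for the same reason. The inequality $-\hat C^{-1}\ge\mathbf I$ transfers a lower bound only through a nonnegative right factor, and $\diag{\xi}\hat{\mathbf U}^{-1}$ is not one. Even if it worked, it would bound a diagonal entry of $\mathbf T_\xi$, whereas $w_j$ is the entry in the row of the top ancestor of $j$.

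\textbf{What is missing.} Strictness must come from $\mathbf W^{-\top}-\mathbf I$, that is, from the loss coupling, and not from $b:=\hat{\mathbf U}^{-\top}\xi$ alone. In the example, $[\mathbf M_\ell^{-\top}\rr]_2>0$ only because $\ell_1>0$.

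The paper treats $\xi\in\{\rr,\xx\}$ separately. It writes $v:=\mathbf M_\ell^{-\top}\xi=b+\mathbf A^\top\mathbf L v$ with $\mathbf A:=\hat C^{-\top}\hat{\mathbf Z}\hat{\mathbf U}^{-1}\ge0$. It then takes the first index $j$ in topological order with $\xi_j>0$ and $v_j=0$, and splits into two cases.

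\begin{itemize}
\item Suppose every strict ancestor $i$ of $j$ has $\xi_i\ell_i=0$. If $\xi_i=0$ the term vanishes. If $\ell_i=0$, then $P_i=Q_i=0$, so row $i$ of $\hat{\mathbf U}^{-1}$ is $\mathbf e_i^\top/u_i$. Hence $b_j=\xi_j/u_j>0$.
\item Otherwise let $k$ be the deepest strict ancestor with $\xi_k\ell_k>0$, and $h$ its child toward $j$. The same reasoning gives $[\mathbf T_\xi]_{hj}=\xi_j/u_j$. The factorization of $\mathbf A$ in the proof of Lemma~\ref{lem:gan-triangular-signs} then gives $A_{kj}\ge\xi_k\xi_j/u_j$. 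Minimality gives $v_k>0$, so $v_j\ge A_{kj}\ell_kv_k>0$.
\end{itemize}

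The weighted case $\xi=\alpha\rr+\beta\xx$ then follows componentwise.
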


\begin{revblock}
\begin{proof}
Lemma~\ref{lem:gan-triangular-signs} makes $\mathbf W$ a $Z$-matrix. \rev{Since $\hat{\mathbf U}^{-1}$ is upper triangular, $\mathbf W^{[k]}=\mathbf M_\ell^{[k]}(\hat{\mathbf U}^{-1})^{[k]}$, and} the positive prefix minors from Lemma~\ref{lemm::positive::Ml} give
\[
 \det\mathbf W^{[k]}=
 \frac{\det\mathbf M_\ell^{[k]}}{\prod_{j=1}^k u_j}>0,
 \qquad k=1,\ldots,n.
\]
Hence $\mathbf W$ is a nonsingular $M$-matrix~\cite[Thm.~6.2.3.E17]{berman1994nonnegative}. Factoring $\mathbf M_\ell=\mathbf W\hat{\mathbf U}$ gives $\mathbf K_u=\hat C^{-\top}\hat{\mathbf Z}\hat{\mathbf U}^{-1}\mathbf W^{-1}\in\R^{n\times n}_{+}$ and
\begin{equation*}
 \mathbf M_\ell^{-\top}\xi
 =\mathbf W^{-\top}\hat{\mathbf U}^{-\top}\xi\in\R^n_{+},
 \quad \xi\in\{\rr,\xx\}.
\end{equation*}
\rev{The strict inequality~\eqref{eq:gan-strict-weighted-inverse} requires a separate argument along ancestor paths, given in Appendix~\ref{app:proof-prop-gan-transformed-M}.}
\end{proof}
\end{revblock}
\rev{The proposition shows why $\mathcal C$ suffices without $\mathbf M_\ell^{-1}\ge0$: the sign information is carried by $\hat{\mathbf U}^{-1}$ through~\eqref{eq:T}, and $\mathbf W^{-1}\ge0$ propagates it.}

\rev{The two regions are not nested in general, but they share a common core.} For comparison, let $\mathcal F_s\subseteq\mathcal F$ impose
$\rr_jP_i+\xx_jQ_i\ge0$ for every $i\in\mathcal N_j^{\anc}$ and $j\in\mathcal N$, without the loss allowance in~\eqref{eq:ancestor-coupling}.
\begin{col}
\label{col:continuation-gan-region}
\[
\operatorname{comp}_{z^0}(\mathcal R_{\mathrm{vs}}\cap\mathcal F_s)
\subseteq\mathcal R_{\mathrm{svs}}\cap\mathcal F_s
\subseteq\mathcal R_{\mathrm{svs}}\cap\mathcal C.
\]
\end{col}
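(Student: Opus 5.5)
The statement consists of two inclusions, and I would prove them separately. The first reduces to Corollary~\ref{col:continuation-strong-region}, and the second to a sign argument on the upper triangular matrix $\hat{\mathbf U}$ from~\eqref{eq::hatU}.

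\emph{First inclusion.} First note that the flat solution $z^0$ lies in $\mathcal F_s$: there $P=Q=0$, so every product $\rr_jP_i+\xx_jQ_i$ vanishes. Since $\mathcal F_s\subseteq\mathcal F$ and $\mathcal R_{\mathrm{vs}}\subseteq\mathcal R$, we have $\mathcal R_{\mathrm{vs}}\cap\mathcal F_s\subseteq\mathcal R\cap\mathcal F$. A path-connected component of a subset that contains $z^0$ lies inside the corresponding component of any superset. Hence
\[
\operatorname{comp}_{z^0}(\mathcal R_{\mathrm{vs}}\cap\mathcal F_s)\subseteq\operatorname{comp}_{z^0}(\mathcal R\cap\mathcal F)\subseteq\mathcal R_{\mathrm{svs}},
\]
where the last step is Corollary~\ref{col:continuation-strong-region}. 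Each point of the left-hand component also lies in $\mathcal F_s$ by construction. This proves the first inclusion.

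\emph{Second inclusion.} Take $z\in\mathcal R_{\mathrm{svs}}\cap\mathcal F_s$. Then $z\in\mathcal R$, so $u\in\R^n_{++}$, and it only remains to verify Assumption~\ref{ass:voltage-ancestor-coupling}. By~\eqref{eq::hatU}, $\hat{\mathbf U}$ has the following structure:
\begin{itemize}
  \item it is upper triangular under the topological ordering;
  \item its diagonal entries are $u_j>0$;
  \item its only nonzero off-diagonal entries are $-2(\rr_jP_i+\xx_jQ_i)$ for $i\in\mathcal N_j^{\anc}$, and these are nonpositive by the definition of $\mathcal F_s$.
\end{itemize}
So $\hat{\mathbf U}$ is a $Z$-matrix. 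Its leading principal minors are $\prod_{j\le k}u_j>0$, so it is a nonsingular $M$-matrix by~\cite[Thm.~6.2.3]{berman1994nonnegative}, and therefore $\hat{\mathbf U}^{-1}\in\R^{n\times n}_+$. Alternatively, write $\hat{\mathbf U}=\mathbf U(\mathbf I-N)$ with $N\ge0$ strictly upper triangular, hence nilpotent, and expand $(\mathbf I-N)^{-1}$ as a finite Neumann series.

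Lemma~\ref{lem:incidence-matrix} gives $-\hat C^{-1}\in\R^{n\times n}_+$, and $\mathbf R,\mathbf X\ge0$ are diagonal. Therefore
\[
\mathbf T_r=(-\hat C^{-1})\mathbf R\hat{\mathbf U}^{-1}\ge0,\qquad \mathbf T_x=(-\hat C^{-1})\mathbf X\hat{\mathbf U}^{-1}\ge0,
\]
as products of nonnegative matrices. Thus $z\in\mathcal C$, which gives the second inclusion.

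There is no serious obstacle here. The only point needing care is confirming that the nonzero off-diagonal support of $\hat{\mathbf U}$ is exactly the ancestor pattern $i\in\mathcal N_j^{\anc}$ from~\eqref{eq::hatU}, so that $\mathcal F_s$ signs all of them. The first inclusion also relies on $z^0\in\mathcal F_s$, so that the component is nonempty and is based at $z^0$.
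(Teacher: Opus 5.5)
Your proposal is correct and follows the paper's own argument. The first inclusion comes from Corollary~\ref{col:continuation-strong-region}; you usefully make explicit that $\mathcal R_{\mathrm{vs}}\cap\mathcal F_s\subseteq\mathcal R\cap\mathcal F$ (using $\ell\ge0$ on $\mathcal R$) and that path components based at $z^0$ are monotone under inclusion. The second comes from $\hat{\mathbf U}$ being an upper triangular $Z$-matrix with positive diagonal on $\mathcal F_s$, so $\hat{\mathbf U}^{-1}\in\R^{n\times n}_+$ and therefore $\mathbf T_r,\mathbf T_x\ge0$.
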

\begin{proof}
The first inclusion follows from Corollary~\ref{col:continuation-strong-region}. On $\mathcal F_s$, $\hat{\mathbf U}$ is an upper triangular $Z$-matrix with positive diagonal, so $\hat{\mathbf U}^{-1}\in\R^{n\times n}_{+}$ and~\eqref{eq:T} holds.
\end{proof}
A similar relationship between C1 from~\cite[Thm.~1]{gan2014exact} and no reverse power flows, i.e., $P,Q\in\R^n_{+}$, can be found in \rev{statement 1) of}~\cite[Thm.~4]{gan2014exact}.

\subsection{Monotonicity with respect to Slack Voltage}
\label{sec:slack-voltage-monotonicity}

\rev{The sign properties of Sections~\ref{sec::monotonicity::ul} and~\ref{sec:gan-matrix-certificate} now translate the derivative formulas~\eqref{eq::passive::dus} into monotone responses to the slack voltage. We distinguish passive systems, where the nodal injections are held fixed, from dispatchable systems, where the receiving-end branch powers can be held fixed instead.}

\subsubsection{Passive Systems}
\label{sec:fixed-injection-slack}
Applying the two certificates, i.e., Propositions~\ref{thm:strong-region-M} \& \ref{prop:gan-transformed-M} to Eqs.~\eqref{eq::dldus}\eqref{eq::dudus}, we have the following network responses to the slack \rev{voltage} $u_{s}$ for passive systems, i.e., when nodal power injections $(p,q)$ are \rev{fixed}:
\begin{col}[Voltage and current monotonicity]
\label{prop:slack-monotonicity-injections}
If $z\in\mathcal R_{\mathrm{svs}}\cap\mathcal F$, then
\begin{align*}
 \partial_{u_s}\ell^\star(p,q;u_s)&\in\R^{n}_{-},\\
 \partial_{u_s}u^\star(p,q;u_s)-\mathbf 1& \in\R^{n}_{+}.
\end{align*}
More precisely,
\begin{equation*}
 [\partial_{u_s}\ell^\star(p,q;u_s)]_j
 \in
 \begin{dcases}
\R_{--}, & \ell_j>0,\\
\{0\}, & \ell_j=0.\\
 \end{dcases}
\end{equation*}
\end{col}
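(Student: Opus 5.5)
The plan is to read off all three claims directly from the implicit-differentiation formulas~\eqref{eq::dldus} and~\eqref{eq::dudus}. These formulas are valid because $\mathcal R_{\mathrm{svs}}\subseteq\mathcal R$ gives $\det J\ne0$, and hence, by~\eqref{eq:subtree-det-reduced}, $\mathbf M_\ell$ is nonsingular. The sign information comes from Proposition~\ref{thm:strong-region-M}: for $z\in\mathcal R_{\mathrm{svs}}\cap\mathcal F$, $\mathbf M_\ell$ is a nonsingular $M$-matrix with $\mathbf M_\ell^{-1}\in\R^{n\times n}_+$. We also use $\ell\in\R^n_+$, which follows from~\eqref{eq:DistFlow-matrix:nonlinear} and $u\in\R^n_{++}$.

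\textbf{Step 1: currents.} Set $w:=\mathbf M_\ell^{-1}\ell$, so that $\partial_{u_s}\ell^\star=-w$. Since $\mathbf M_\ell^{-1}\ge0$ and $\ell\ge0$, we get $w\ge0$, which is the first inclusion.

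\textbf{Step 2: voltages.} By~\eqref{eq::dudus}, $\partial_{u_s}u^\star-\mathbf1=\mathbf K_u\ell$ with $\mathbf K_u=\hat C^{-\top}\hat{\mathbf Z}\mathbf M_\ell^{-1}$. We sign this product factor by factor.
\begin{itemize}
\item Lemma~\ref{lem:incidence-matrix} gives $\hat C^{-\top}\in\R^{n\times n}_-$, since its entries are $-1$ or $0$.
\item By definition, $\hat{\mathbf Z}\in\R^{n\times n}_-$.
\item Hence $\hat C^{-\top}\hat{\mathbf Z}\in\R^{n\times n}_+$.
\end{itemize}
Multiplying by $\mathbf M_\ell^{-1}\ge0$ gives $\mathbf K_u\in\R^{n\times n}_+$, and applying this to $\ell\ge0$ gives $\mathbf K_u\ell\ge0$.

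\textbf{Step 3: the refined statement.} We work with the linear system $\mathbf M_\ell w=\ell$ row by row.

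If $\ell_j=0$, then~\eqref{eq:DistFlow-matrix:nonlinear} forces $P_j^2+Q_j^2=u_j\ell_j=0$, so $P_j=Q_j=0$. Inspecting~\eqref{eq:M-entry}, every off-diagonal entry in row $j$ is a multiple of $\ell_j$ or of $\rr_{\cdot}P_j+\xx_{\cdot}Q_j$. Hence row $j$ of $\mathbf M_\ell$ reduces to $u_j\mathbf e_j^\top$. Row $j$ of $\mathbf M_\ell w=\ell$ then reads $u_jw_j=0$, and $u_j>0$ gives $w_j=0$.

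If $\ell_j>0$, we use that $\mathbf M_\ell$ is a $Z$-matrix (Lemma~\ref{lem:Mell-Z}) and that $w\ge0$. Then row $j$ gives
\[
[\mathbf M_\ell]_{jj}\,w_j=\ell_j-\sum_{k\ne j}[\mathbf M_\ell]_{jk}w_k\ge\ell_j>0 .
\]
A nonsingular $M$-matrix has positive diagonal entries, so $[\mathbf M_\ell]_{jj}>0$, and therefore $w_j>0$.

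Combining the two cases, $[\partial_{u_s}\ell^\star]_j=-w_j$ is negative when $\ell_j>0$ and zero when $\ell_j=0$.

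\textbf{Main obstacle.} The only delicate point is the zero-current case. It requires the verification, from~\eqref{eq:M-entry}, that every off-diagonal entry of row $j$ vanishes once $\ell_j=0$, including the entries $-2(\rr_kP_j+\xx_kQ_j+\dots)$ for descendants $k$ of $j$. Everything else is sign bookkeeping on top of Proposition~\ref{thm:strong-region-M}.
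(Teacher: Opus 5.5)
Your proposal is correct and matches the paper's proof essentially step for step: the weak signs come from Proposition~\ref{thm:strong-region-M} together with $\hat C^{-\top}\hat{\mathbf Z}\in\R^{n\times n}_{+}$, and the zero-current case uses $P_j=Q_j=0$ to reduce row $j$ of $\mathbf M_\ell$ to $u_j\mathbf e_j^\top$. The only difference is cosmetic and lies in the case $\ell_j>0$. There the paper uses the positive diagonal of $\mathbf M_\ell^{-1}$, giving $w_j\ge[\mathbf M_\ell^{-1}]_{jj}\ell_j>0$, whereas you use the $Z$-matrix row inequality; both arguments are valid.
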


\begin{revblock}
\begin{proof}
The nonnegative inverse in Proposition~\ref{thm:strong-region-M} and $\hat C^{-\top}\hat{\mathbf Z}\in\R^{n\times n}_{+}$ give the weak signs from~\eqref{eq::dldus}--\eqref{eq::dudus}. Set $\rev{\beta}=\mathbf M_\ell^{-1}\ell\in\R^n_{+}$. If $\ell_j>0$, the positive inverse diagonal gives $\rev{\beta}_j>0$. If $\ell_j=0$, exactness gives $P_j=Q_j=0$ and row $j$ of $\mathbf M_\ell$ is $u_j\mathbf e_j^\top$. Hence $u_j\rev{\beta}_j=\ell_j=0$, proving the strictness characterization.
\end{proof}
\end{revblock}
\rev{The observation that $\ell_j=0$ reduces row $j$ of $\mathbf M_\ell$ to $u_j\mathbf e_j^\top$ is reused below.}

\begin{col}[Voltage and total-loss monotonicity]
\label{col:gan-slack-monotonicity}
If $z\in\mathcal R_{\mathrm{svs}}\cap \mathcal C$, then
\begin{align*}
 \partial_{u_s}u^\star(p,q;u_s)-\mathbf 1& \in\R^{n}_{+},
\end{align*}
and the total active and reactive losses satisfy
\begin{align*}
   \partial_{u_s}\big(\mathscr \zeta^\top\ell^\star(p,q;u_s)\big)\leq0,\qquad\mathrm{with}\quad \zeta\in\{\rr,\xx\}.
\end{align*}
More precisely, each loss decreases strictly if and only if it is non-lossless at the operating point:
 \begin{align}\label{eq::strict::ineq}
   \partial_{u_s}\big(\mathscr \zeta^\top\ell^\star(p,q;u_s)\big)<0
 &\;\Longleftrightarrow\; \mathscr \zeta^\top\ell>0\;\mathrm{with}\;  \zeta\in\{\rr,\xx\}.
 \end{align}
\end{col}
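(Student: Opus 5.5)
We plug Proposition~\ref{prop:gan-transformed-M} into the implicit-differentiation formulas~\eqref{eq::dldus}--\eqref{eq::dudus}, which hold because $z\in\mathcal R_{\mathrm{svs}}\subseteq\mathcal R$. We treat the voltage statement first, then the weak loss inequality, and finally the strict characterization~\eqref{eq::strict::ineq}.

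For the voltages, \eqref{eq::dudus} gives $\partial_{u_s}u^\star-\mathbf 1=\mathbf K_u\ell$. Proposition~\ref{prop:gan-transformed-M} gives $\mathbf K_u\in\R^{n\times n}_+$, and $\ell\in\R^n_+$ follows from~\eqref{eq:DistFlow-matrix:nonlinear} with $u>0$. Hence $\mathbf K_u\ell\in\R^n_+$.

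For the losses, take $\zeta\in\{\rr,\xx\}$ and apply~\eqref{eq::dldus}:
\[
\partial_{u_s}\big(\zeta^\top\ell^\star\big)=-\zeta^\top\mathbf M_\ell^{-1}\ell=-(\mathbf M_\ell^{-\top}\zeta)^\top\ell .
\]
Proposition~\ref{prop:gan-transformed-M} gives $\mathbf M_\ell^{-\top}\zeta\in\R^n_+$, and $\ell\ge0$, so this derivative is $\le0$.

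The strict characterization needs more care, because $\zeta$ may have zero entries (for example $\rr_j=0$ on some branch), and then~\eqref{eq:gan-strict-weighted-inverse} does not apply directly. Write $w:=\mathbf M_\ell^{-\top}\zeta$.

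For the direction ``$\Rightarrow$'', argue by contraposition. If $\zeta^\top\ell=0$, then $\ell_j=0$ wherever $\zeta_j>0$. For every $j$ with $\ell_j>0$ we therefore have $\zeta_j=0$.
\begin{itemize}
\item Consider an index $j$ with $\ell_j=0$. By the observation after Corollary~\ref{prop:slack-monotonicity-injections}, $P_j=Q_j=0$ and row $j$ of $\mathbf M_\ell$ is $u_j\mathbf e_j^\top$. Hence $[\mathbf M_\ell^{-1}\ell]_j=\ell_j/u_j=0$.
\item So $\beta:=\mathbf M_\ell^{-1}\ell$ is supported only on indices where $\zeta_j=0$.
\item Consequently $\zeta^\top\beta=0$, and the derivative vanishes.
\end{itemize}

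For the direction ``$\Leftarrow$'', suppose $\zeta^\top\ell>0$, so some $j$ has $\zeta_j>0$ and $\ell_j>0$. It suffices to show $w_j>0$ for every index with $\zeta_j>0$. Use the factorization $w=\mathbf W^{-\top}\hat{\mathbf U}^{-\top}\zeta$.
\begin{itemize}
\item Since $\mathbf W$ is a nonsingular $M$-matrix, $\mathbf W^{-\top}$ has positive diagonal and is nonnegative. Thus $w\ge \diag{\mathbf W^{-\top}}\,\hat{\mathbf U}^{-\top}\zeta$ componentwise, and it is enough to show $[\hat{\mathbf U}^{-\top}\zeta]_j>0$ whenever $\zeta_j>0$.
\item From Lemma~\ref{lem:gan-triangular-signs}, $\hat{\mathbf U}^{-\top}\zeta=\mathbf T_\zeta^\top c_0$. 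Its $j$th component is the $j$th column sum of $-\hat C^{-1}\diag{\zeta}\hat{\mathbf U}^{-1}$ restricted appropriately.
\item The diagonal term contributes $\zeta_j/u_j>0$. The other terms are nonnegative by Assumption~\ref{ass:voltage-ancestor-coupling}.
\end{itemize}

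The main obstacle is exactly this strictness step. The nonnegativity coming from $\mathcal C$ is entrywise on $\mathbf T_\zeta$, not on $\hat{\mathbf U}^{-1}$, so the diagonal-plus-nonnegative argument has to be phrased through $\mathbf T_\zeta$. To do so, we would expand $[\mathbf T_\zeta^\top c_0]_j$ along the ancestor path of $j$, as in the appendix argument invoked for~\eqref{eq:gan-strict-weighted-inverse}, and isolate a strictly positive contribution $\zeta_j/u_j$ while showing the remainder is nonnegative.

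If that decomposition fails, the fallback is to apply~\eqref{eq:gan-strict-weighted-inverse} to the perturbed weight $\zeta+\epsilon\mathbf 1$ and combine it with the support argument above. Alternatively, we could restrict to the compatible subtree on which $\zeta>0$ and use Remark~\ref{rem::C-inheritance} to transfer the conclusion back to the full network.
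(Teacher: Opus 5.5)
Your voltage bound, the weak loss inequality from $\mathbf M_\ell^{-\top}\zeta\in\R^n_+$, and the contrapositive of ``$\Rightarrow$'' are correct. The support argument ($u_j\beta_j=\ell_j=0$ whenever $\ell_j=0$) is neater than the paper's and does not need $\mathcal C$.

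The gap is in ``$\Leftarrow$''. You reduce it to the claim that $b_j:=[\hat{\mathbf U}^{-\top}\zeta]_j>0$ whenever $\zeta_j>0$, and that claim is false. Under reverse flow the ancestor terms $\zeta_i[\hat{\mathbf U}^{-1}]_{ij}$ are negative. $\mathcal C$ only makes the full path sums $[\mathbf T_\zeta]_{ij}$ nonnegative, and these already contain $\zeta_j/u_j$; indeed $b_j=[\mathbf T_\zeta]_{k_1j}$ for the first bus $k_1$ on the root path of $j$, so $b_j$ can vanish.

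Concretely, take the feeder $0\to1\to2$ with $\rr_1=\rr_2=1$, $\xx=0$, $u_s=1$, and real phasors $V_0=1$, $V_1=2$, $V_2=2.1$. Then $P_1=-2$, $u_1=4$, $\ell_1=1$, $u_2=4.41$, $\ell_2=0.01$, and $[\hat{\mathbf U}]_{12}=-2\rr_2P_1=4$. Hence $\mathbf T_r=\diag{1/4,\,1/u_2}$ and $\mathbf T_x=0$. The path $V_1(t)=1+t$, $V_2(t)=1+1.1t$ keeps all three subtree Jacobians nonsingular, so $z\in\mathcal R_{\mathrm{svs}}\cap\mathcal C$. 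Yet $b=\hat{\mathbf U}^{-\top}\rr=(1/4,\,0)$, although $\rr_2\ell_2>0$. So no path expansion can isolate $\zeta_j/u_j$ with a nonnegative remainder, and the diagonal bound $w_j\ge[\mathbf W^{-\top}]_{jj}b_j$ gives nothing at $j=2$. There $w_2>0$ holds only through the off-diagonal entry $[\mathbf W^{-1}]_{12}$.

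Your fallbacks do not close the gap. The weight $\zeta+\epsilon\mathbf 1$ is not of the form $\alpha\rr+\beta\xx$ that \eqref{eq:gan-strict-weighted-inverse} requires. The admissible weight $\rr+\epsilon\xx\in\R^n_{++}$ gives strict positivity only for $\epsilon>0$, and strictness is lost as $\epsilon\to0$. Strictness also does not transfer from a subtree, since $\mathbf M_{\ell,\kappa}^{-1}$ is not a principal submatrix of $\mathbf M_\ell^{-1}$.

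The missing idea is to stop targeting a prescribed index and prove $b^\top\ell>0$ instead. Since $\mathbf W^{-1}=\hat{\mathbf U}\mathbf M_\ell^{-1}=\mathbf I_n+\mathbf L\mathbf K_u$, the derivative equals $-b^\top\mathbf L(\mathbf 1+\mathbf K_u\ell)$, which is negative if and only if $b^\top\ell>0$. Suppose $b^\top\ell=0$. Then $b_j=0$ wherever $\ell_j>0$, and $P_j=Q_j=0$ wherever $\ell_j=0$, so $\mathbf Pb=\mathbf Qb=0$. This gives $\zeta=\hat{\mathbf U}^\top b=\mathbf U b$, hence $\zeta^\top\ell=\sum_j u_jb_j\ell_j=0$. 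In the example, strictness is carried by $b_1\ell_1$, not by $j=2$.

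If you prefer your componentwise target, the statement that $w_j>0$ whenever $\zeta_j>0$ is true, but the strictness must come from the off-diagonal part of $\mathbf W^{-\top}$. Set $\mathbf A:=\hat C^{-\top}\hat{\mathbf Z}\hat{\mathbf U}^{-1}\in\R^{n\times n}_+$, so that $w=b+\mathbf A^\top\mathbf L w$. Then induct in topological order, using the deepest strict ancestor $k$ of $j$ with $\zeta_k\ell_k>0$, for which $A_{kj}\ge\zeta_k\zeta_j/u_j$. This is what the appendix argument behind \eqref{eq:gan-strict-weighted-inverse} actually proves.
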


\begin{revblock}
\begin{proof}
By Proposition~\ref{prop:gan-transformed-M}, $\mathbf K_u\in\R_+^{n\times n}$. Equation~\eqref{eq::dudus} therefore gives
\[
 \partial_{u_s}u^\star(p,q;u_s)-\mathbf1
 =\mathbf K_u\ell\in\R_+^n.
\]
Fix $\zeta\in\{\mathscr r,\mathscr x\}$ and set $b:=\hat{\mathbf U}^{-\top}\zeta\in\R_+^n$, where nonnegativity follows from Lemma~\ref{lem:gan-triangular-signs}. Since $\mathbf M_\ell=\mathbf W\hat{\mathbf U}$ \rev{and $\mathbf W^{-1}=\hat{\mathbf U}\mathbf M_\ell^{-1}=(\mathbf M_\ell+\mathbf L\hat C^{-\top}\hat{\mathbf Z})\mathbf M_\ell^{-1}=\mathbf I_n+\mathbf L\mathbf K_u$}, Equation~\eqref{eq::dldus} yields
\[
 \partial_{u_s}\bigl(\zeta^\top\ell^\star(p,q;u_s)\bigr)
 =-\zeta^\top\mathbf M_\ell^{-1}\ell
 =-b^\top\mathbf W^{-1}\ell
 \rev{=-b^\top\mathbf L\bigl(\mathbf1+\mathbf K_u\ell\bigr)\le0.}
\]
\rev{The equivalence between strict decrease and $\zeta^\top\ell>0$ is verified in Appendix~\ref{app:proof-col-gan-slack-monotonicity}.}
\end{proof}
\end{revblock}
\rev{The identity $\mathbf W^{-1}=\mathbf I_n+\mathbf L\mathbf K_u$ shows where the total-loss sign comes from: a slack-voltage increase raises every voltage, and under $\mathcal C$ the resulting current changes are weighted by the nonnegative vector $b$. Unlike Corollary~\ref{prop:slack-monotonicity-injections}, the $\mathcal C$ certificate does not sign individual current derivatives. Only voltages and impedance-weighted total losses are signed.}

\subsubsection{Dispatchable Systems}

For a dispatchable subtree, injection adjustments permit a second response in which $(P,Q)$ are fixed. Use the alternative partition
\begin{equation*}
 x_d:=(p,q,u),\qquad y_d:=(\ell,P,Q).
\end{equation*}
The affine Jacobian $\partial_{x_d}g_\lin=\operatorname{diag}(-\mathbf I_n,-\mathbf I_n,-\hat C^\top)$ is nonsingular. Thus the affine equations determine $x_d=x_d^\star(y_d;u_s)$ with
\begin{equation*}
 \mathrm D_{(y_d,u_s)}x_d^\star(y_d,u_s)=
 \left[\begin{array}{ccc;{2pt/2pt}c}
 \hat C_f\mathbf R&\hat C&\mathbf0_n&0\\
 \hat C_f\mathbf X&\mathbf0_n&\hat C&0\\
 \hat C^{-\top}\mathbf Z^2&2\hat C^{-\top}\mathbf R
                         &2\hat C^{-\top}\mathbf X&\mathbf1
 \end{array}\right],
\end{equation*}
with
\begin{equation}\label{eq::partial::xy::receiving}
 \partial_\ell x_d^\star=
 \begin{bmatrix}\hat C_f\mathbf R\\\hat C_f\mathbf X\\\hat C^{-\top}\mathbf Z^2\end{bmatrix},
 \qquad \partial_{u_s}x_d^\star=\begin{bmatrix}0\\0\\\mathbf1\end{bmatrix}.
\end{equation}
Define the reduced nonlinear residual $$\hat g_\non^r(y_d;u_s):=g_\non(x_d^\star(y_d;u_s),y_d;u_s).$$ Its derivative is
\begin{equation*}
 \mathrm D_{(y_d,u_s)}\hat g_\non^r(y_d,u_s)
 =-\begin{bmatrix}\mathbf M_\ell^r&\mathbf M_P^r&\mathbf M_Q^r&\ell\end{bmatrix},
\end{equation*}
with
\begin{align*}
 \mathbf M_\ell^r:=&\;\mathbf U+\mathbf L\hat C^{-\top}\mathbf Z^2,\\
  \mathbf M_P^r:=&\;2(\mathbf L\hat C^{-\top}\mathbf R-\mathbf P),\notag\\
 \mathbf M_Q^r:=&\;2(\mathbf L\hat C^{-\top}\mathbf X-\mathbf Q).\notag
\end{align*}
\rev{The superscript $r$ marks the reduction at fixed receiving-end powers. Unlike $\mathbf M_\ell$, the matrix $\mathbf M_\ell^r$ is triangular, so regularity alone fixes its signs.}
\begin{prop}\label{prop::nonsingular::Mr}
If
$$z\in\mathcal R_{\mathrm{svs}},$$
then $\mathbf{M}^r_\ell$ is a nonsingular $M$-matrix and $$(\mathbf{M}^r_\ell)^{-1}\in\R^{n\times n}_{+}.$$
\end{prop}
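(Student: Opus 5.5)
The plan is to read off the sign structure of $\mathbf M_\ell^r=\mathbf U+\mathbf L\hat C^{-\top}\mathbf Z^2$ entrywise from the path structure of $\hat C^{-1}$ in Lemma~\ref{lem:path-CfC}. The diagonal sign will come from Lemma~\ref{lem:strong-branch-limit}, and the conclusion will follow from triangularity. No continuation argument along a path is needed, because triangularity makes the determinant an explicit product of diagonal entries.

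\textbf{Step 1 (entries).} By Lemma~\ref{lem:path-CfC}, $[\hat C^{-\top}]_{ij}=[\hat C^{-1}]_{ji}=-1$ if $j\in\mathcal N_i^{\anc+}$ and $0$ otherwise. Hence
\[
[\mathbf M_\ell^r]_{ij}=
\begin{cases}
u_i-\ell_i|\zz_i|^2,& i=j,\\
-\ell_i|\zz_j|^2,& j\in\mathcal N_i^{\anc},\\
0,&\text{otherwise}.
\end{cases}
\]
Under the topological ordering $\pi(j)<j$, every strict ancestor $j$ of $i$ satisfies $j<i$. So $\mathbf M_\ell^r$ is lower triangular, which is the claim made just before the proposition. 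Since $\ell\in\R^n_+$ by~\eqref{eq:DistFlow-matrix:nonlinear}, all off-diagonal entries are nonpositive, and $\mathbf M_\ell^r$ is a $Z$-matrix.

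\textbf{Step 2 (diagonal).} This is the only place where $z\in\mathcal R_{\mathrm{svs}}$ enters, and it is the main step. Lemma~\ref{lem:strong-branch-limit} gives $u_j-|\zz_j|^2\ell_j>0$ for every $j\in\mathcal N$, because the single-edge compatible subtrees stay regular along the common path from $z^0$. Note that membership in $\mathcal R$ alone would not suffice, since the full-network Jacobian does not control these individual branch quantities. Hence every diagonal entry of $\mathbf M_\ell^r$ is positive.

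\textbf{Step 3 (conclusion).} Write $\mathbf M_\ell^r=\mathbf D(\mathbf I-\mathbf N)$, where $\mathbf D$ is the positive diagonal part and $\mathbf N:=\mathbf D^{-1}(\mathbf D-\mathbf M_\ell^r)\in\R^{n\times n}_+$ is strictly lower triangular. Then $\mathbf N$ is nilpotent, so
\[
(\mathbf M_\ell^r)^{-1}=\Bigl(\sum_{k=0}^{n-1}\mathbf N^k\Bigr)\mathbf D^{-1}\in\R^{n\times n}_+ .
\]
A $Z$-matrix with nonnegative inverse is a nonsingular $M$-matrix by~\cite[Thm.~6.2.3]{berman1994nonnegative}. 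Alternatively, its leading principal minors are products of positive diagonal entries, which gives the same conclusion via~\cite[Thm.~6.2.3.E17]{berman1994nonnegative}.
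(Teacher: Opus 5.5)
Your proposal is correct and follows the paper's proof essentially step for step: the same entrywise lower-triangular $Z$-matrix structure from Lemma~\ref{lem:path-CfC}, positive diagonal entries from Lemma~\ref{lem:strong-branch-limit}, and the $M$-matrix conclusion via positive leading principal minors. Your nilpotent Neumann-series expansion is only a more explicit way of exhibiting $(\mathbf M_\ell^r)^{-1}\ge0$, and it does not change the argument in any essential way.
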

\begin{proof}
Since $z\in\mathcal R_{\mathrm{svs}}$, we have $u\in\R^n_{++}$ and $\ell\in\R^n_{+}$, and the path structure (Lemma~\ref{lem:incidence-matrix}) makes $\mathbf M_\ell^r$ a lower triangular $Z$-matrix:
\begin{equation*}
 [\mathbf M_\ell^r]_{ij}=
 \begin{cases}
 u_i-|\zz_i|^2\ell_i,&i=j,\\
 -\ell_i|\zz_j|^2,&j\in\mathcal N_i^{\anc},\\
 0,&\text{otherwise}.
 \end{cases}
\end{equation*}%
\rev{Its leading principal minors are the products $\prod_{j\le k}(u_j-|\zz_j|^2\ell_j)$, which are positive by Lemma~\ref{lem:strong-branch-limit}. Hence $\mathbf M_\ell^r$ is a nonsingular $M$-matrix by~\cite[Thm.~6.2.3.E17]{berman1994nonnegative}.}
\end{proof}
Similar to Eq.~\eqref{eq::passive::dus}, we have
\begin{subequations}\label{eq::dispatch::ul}
  \begin{align}
 \partial_{u_s}\ell^\star(P,Q;u_s)
 &=-(\mathbf M_\ell^r)^{-1}\ell,
 \\
 \partial_{u_s}u^\star(P,Q;u_s)
 &=\mathbf1-\hat C^{-\top}\mathbf Z^2
                  (\mathbf M_\ell^r)^{-1}\ell.
\end{align}
\end{subequations}
\rev{Analogous} to Corollary~\ref{prop:slack-monotonicity-injections}, we have the following statement:
\begin{col}[Monotonicity at fixed receiving-end powers]
\label{prop:slack-monotonicity-receiving}
If $z\in\mathcal R_\mathrm{svs}$, then
\begin{align*}
 \partial_{u_s}\ell^\star(P,Q;u_s)&\in\R^{n}_{-},\\
 \partial_{u_s}u^\star(P,Q;u_s)-\mathbf 1& \in\R^{n}_{+}.
\end{align*}
More precisely,
\begin{equation*}
 [\partial_{u_s}\ell^\star(P, Q;u_s)]_j
 \in
 \begin{dcases}
\R_{--}, & \ell_j>0,\\
\{0\}, & \ell_j=0.\\
 \end{dcases}
\end{equation*}
\end{col}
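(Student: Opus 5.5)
The plan is to mirror the proof of Corollary~\ref{prop:slack-monotonicity-injections}, with $\mathbf M_\ell$ replaced by the receiving-end reduction $\mathbf M_\ell^r$ and using the derivative formulas~\eqref{eq::dispatch::ul}.

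\textbf{Weak signs.} First I will invoke Proposition~\ref{prop::nonsingular::Mr}. Since $z\in\mathcal R_{\mathrm{svs}}$, it gives $(\mathbf M_\ell^r)^{-1}\in\R^{n\times n}_+$. Because $\ell\in\R^n_+$ by~\eqref{eq:DistFlow-matrix:nonlinear}, the vector $\beta:=(\mathbf M_\ell^r)^{-1}\ell$ lies in $\R^n_+$, and so $\partial_{u_s}\ell^\star(P,Q;u_s)=-\beta\in\R^n_-$. For the voltages, $\hat C^{-\top}\in\R^{n\times n}_-$ by Lemma~\ref{lem:incidence-matrix} and $\mathbf Z^2$ is a nonnegative diagonal matrix. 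Hence $-\hat C^{-\top}\mathbf Z^2\in\R^{n\times n}_+$, and $\partial_{u_s}u^\star-\mathbf 1=-\hat C^{-\top}\mathbf Z^2\beta\in\R^n_+$.

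\textbf{Strict characterization.} If $\ell_j>0$, then $\beta_j\ge[(\mathbf M_\ell^r)^{-1}]_{jj}\,\ell_j$, because every other term in row $j$ of $(\mathbf M_\ell^r)^{-1}\ell$ is nonnegative. The diagonal of the inverse of a nonsingular $M$-matrix is strictly positive. More concretely, $\mathbf M_\ell^r$ is lower triangular, so $[(\mathbf M_\ell^r)^{-1}]_{jj}=(u_j-|\zz_j|^2\ell_j)^{-1}>0$ by Lemma~\ref{lem:strong-branch-limit}. This gives $\beta_j>0$.

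If instead $\ell_j=0$, I will use the explicit entries of $\mathbf M_\ell^r$ from the proof of Proposition~\ref{prop::nonsingular::Mr}. In row $j$, the off-diagonal entries are $-\ell_j|\zz_k|^2$, which vanish, and the diagonal entry is $u_j$. So row $j$ of $\mathbf M_\ell^r$ is $u_j\mathbf e_j^\top$. Row $j$ of $\mathbf M_\ell^r\beta=\ell$ then reads $u_j\beta_j=\ell_j=0$, and since $u_j>0$ we get $\beta_j=0$. Unlike the passive case, this step does not need $P_j=Q_j=0$, because the row structure follows directly from the triangular form.

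\textbf{Main obstacle.} The only point needing care is the legitimacy of~\eqref{eq::dispatch::ul}, i.e., that the implicit function theorem applies with $(P,Q)$ held fixed. This reduces to nonsingularity of $\mathbf M_\ell^r$, which Proposition~\ref{prop::nonsingular::Mr} already supplies. Everything else is sign bookkeeping.
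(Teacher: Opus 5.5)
Your proposal is correct and follows essentially the paper's argument: Proposition~\ref{prop::nonsingular::Mr} gives $\beta=(\mathbf M_\ell^r)^{-1}\ell\ge0$, the voltage sign comes from $-\hat C^{-\top}\mathbf Z^2\beta\ge0$, and strictness is read off row $j$ of $\mathbf M_\ell^r\beta=\ell$. The only cosmetic difference is in the strictness step: the paper handles both cases at once through the row identity $(u_j-|\zz_j|^2\ell_j)\beta_j=\ell_j\bigl(1+\sum_{k\in\mathcal N_j^{\anc}}|\zz_k|^2\beta_k\bigr)$, whereas you split into $\ell_j>0$ (using the positive inverse diagonal) and $\ell_j=0$ (using that row $j$ reduces to $u_j\mathbf e_j^\top$).
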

\begin{proof}
By Proposition~\ref{prop::nonsingular::Mr}, we have $(\mathbf{M}^r_\ell)^{-1}\in\R^{n\times n}_+$.
For $\beta:=(\mathbf M_\ell^r)^{-1}\ell\in\R^n_{+}$, \rev{row $j$ of $\mathbf M_\ell^r\beta=\ell$ reads}
\[
 \rev{(u_j-|\zz_j|^2\ell_j)}\beta_j=\ell_j\left(1+\sum_{k\in\mathcal N_j^{\anc}}|\zz_k|^2\beta_k\right).
\]
Hence $\beta_j>0$ exactly when $\ell_j>0$. The voltage sign follows from $-\hat C^{-\top}\mathbf Z^2\beta\in\R^n_{+}$.
\end{proof}
Following from~\eqref{eq::partial::xy::receiving}, we also have the injection adjustments:
\begin{subequations}\label{eq::dispatch::pq}
\begin{align}
 \partial_{u_s}p^\star(P,Q;u_s)
 &=-\hat C_f\mathbf R(\mathbf M_\ell^r)^{-1}\ell\in\R^n_{-},\\
 \partial_{u_s}q^\star(P,Q;u_s)
 &=-\hat C_f\mathbf X(\mathbf M_\ell^r)^{-1}\ell\in\R^n_{-}.
\end{align}
\end{subequations}
Their compatibility with operating bounds is checked in the hybrid certificate.

\section{Exactness in Different Systems}
\label{sec:exactness-certificates}

\rev{We now turn from the DistFlow equations to the OPF problem~\eqref{eq:opf-distflow-split}. The sensitivities of Section~\ref{sec:regularity-sensitivity} are used to certify positive nonlinear multipliers, first for fully dispatchable and passive systems in this section, and then, in Section~\ref{sec:hybrid-exactness}, for their interconnection. Section~\ref{sec:normal-cones} states the multiplier certificate itself. Sections~\ref{sec:fully-controllable} and~\ref{sec:passive-fixed-injection} construct, for each type of system, a first-order direction that isolates one branch multiplier and determines its sign.}

\subsection{Exactness Certificate via Nonlinear Multipliers}
\label{sec:normal-cones}

To retain active operating bounds in the multiplier argument, we express stationarity using normal and tangent cones. For a closed convex set $\mathcal S$ and $w\in\mathcal S$, write
\begin{align*}
 N_{\mathcal S}(w)
 :=&\{\xi:\langle\xi,\widetilde w-w\rangle\le0
                \text{ for every }\widetilde w\in\mathcal S\},\\
 T_{\mathcal S}(w)
 :=&\operatorname{cl}\operatorname{cone}(\mathcal S-w).
\end{align*}
Set $N_{\mathcal S}(w)=\varnothing$ outside $\mathcal S$. Normal-tangent polarity gives
\begin{equation}\label{eq:normal-tangent-polarity}
 \xi\in N_{\mathcal S}(w),\quad d\in T_{\mathcal S}(w)
 \quad\Longrightarrow\quad \langle\xi,d\rangle\le0,
\end{equation}
see~\cite[Ch.~6]{rockafellar1998variational}.
The original problem~\eqref{eq:opf-distflow-split} and its relaxation~\eqref{eq:opf-soc} share the Lagrangian
\[
\mathcal L(z,\lambda_\lin,\lambda_\non):=f(z)+\lambda_\lin^\top g_\lin(z)+\lambda_\non^\top g_\non(z).
\]
For $w=(z,\lambda_\lin,\lambda_\non)$, define
\[
\mathscr F(w):=\begin{bmatrix}\nabla_z\mathcal L(z,\lambda_\lin,\lambda_\non)\\-g_\lin(z)\\-g_\non(z)\end{bmatrix}.
\]
Their \rev{Karush--Kuhn--Tucker (KKT)} systems can be written as generalized equations
\begin{subequations}\label{eq:GE-KKT}
\begin{align}
0&\in\mathscr F(w)+N_{\mathcal Z\times\mathbb R^{3n}\times\mathbb R^n}(w),\label{eq:GE-equality}\\
0&\in\mathscr F(w)+N_{\mathcal Z\times\mathbb R^{3n}\times\mathbb R_+^n}(w).\label{eq:GE-SOC}
\end{align}
\end{subequations}
\rev{A KKT point of the original problem is feasible for the relaxation and satisfies complementarity automatically, since $g_\non(z)=0$. Hence} nonnegative nonlinear multipliers $\lambda_\non$ transfer an original KKT point to the relaxed KKT system.
The stronger condition
\begin{equation*}
 \lambda_\non^*\in\R_{++}^n
\end{equation*}
forces every relaxed optimizer to be exact. Related multiplier arguments appear in~\cite[Lemmas~3-6]{gan2012exactness}. Stating this condition as \rev{a} general certificate allows the passive and dispatchable subtrees to use the same exactness argument, including when a subtree root voltage is a decision variable.
\begin{thm}[Exactness via nonlinear multipliers]
\label{thm:positive-multiplier-transfer}
Under Assumption~\ref{ass:convex-exactness}, let $w^*=(z^*,\lambda_\lin^*,\lambda_\non^*)$ be a KKT point of the original problem~\eqref{eq:opf-distflow-split}. If $\lambda_\non^*\in\R_{++}^n$, then $w^*$ is also a KKT point of the relaxed problem~\eqref{eq:opf-soc}, $z^*$ is a global minimizer of both problems, and every relaxed optimizer satisfies $g_\non(z)=0$. Moreover, if the slack bus voltage is fixed, then $z^*$ is the unique global minimizer of both~\eqref{eq:opf-distflow-split} and~\eqref{eq:opf-soc}.
\end{thm}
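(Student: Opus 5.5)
The plan is to go from the KKT point of the original problem~\eqref{eq:opf-distflow-split} to a KKT point of the convex relaxation~\eqref{eq:opf-soc}, and then use convexity and strict complementarity.

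\textbf{Step 1: transfer of the KKT point.} The generalized equations~\eqref{eq:GE-equality} and~\eqref{eq:GE-SOC} differ only in the normal cone attached to $\lambda_\non$: $N_{\R^n}$ versus $N_{\R^n_+}$. Since $g_\non(z^*)=0$ and $\lambda_\non^*\in\R^n_{++}$, we have $N_{\R^n_+}(\lambda_\non^*)=\{0\}=N_{\R^n}(\lambda_\non^*)$. The inclusion $0\in-g_\non(z^*)+\{0\}$ holds in both systems, and the $z$ and $\lambda_\lin$ components are identical. Hence $w^*$ solves~\eqref{eq:GE-SOC}.

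\textbf{Step 2: global optimality.} Under Assumption~\ref{ass:convex-exactness}, $g_\lin$ is affine and each component of $g_\non$ is convex. This holds because $P_j^2+Q_j^2-u_j\ell_j\le0$ is the rotated cone. Formally the function is not globally convex, so I will use instead that the feasible set is convex and argue via the Lagrangian on the feasible set, or rewrite the cone constraint with a convex representation. With $\lambda_\non^*\ge0$, I will argue directly. Take any relaxed-feasible $\tilde z$ and set $d=\tilde z-z^*$. Convexity of $f$ gives $f(\tilde z)\ge f(z^*)+\nabla f^\top d$. Stationarity gives $-\nabla f-\partial g_\lin^\top\lambda_\lin^*-\partial g_\non^\top\lambda_\non^*\in N_{\mathcal Z}(z^*)$, so
\[
\nabla f^\top d\ge-(\lambda_\non^*)^\top\partial g_\non(z^*)d,
\]
using $\partial g_\lin d=0$. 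The key inequality is $\partial g_{\non,j}(z^*)d\le g_{\non,j}(\tilde z)-g_{\non,j}(z^*)\le0$. This is a gradient inequality at a point on the boundary of the convex set $\{g_{\non,j}\le0,\,u_j>0,\,\ell_j\ge 0\}$. It follows because the gradient at a tight point is a normal to the convex rotated cone, equivalently by quasiconvexity of $g_{\non,j}$ on $u,\ell>0$. I expect this to be the main technical point. I will verify it by writing $P_j^2+Q_j^2\le u_j\ell_j$ as the SOC constraint $\|(2P_j,2Q_j,u_j-\ell_j)\|\le u_j+\ell_j$ and checking that the gradient of $g_{\non,j}$ at a tight point is a positive multiple of the SOC normal. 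The degenerate case $u_j\ell_j=0$ is excluded since $u_j>0$, and if $\ell_j=0$ then $P_j=Q_j=0$ and the gradient is $(0,0,-\ell_j,-u_j)$, which is handled directly. Therefore $f(\tilde z)\ge f(z^*)$, so $z^*$ is a global minimizer of the relaxation and hence of the original problem.

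\textbf{Step 3: every relaxed optimizer is exact.} For a relaxed optimizer $\tilde z$, equality $f(\tilde z)=f(z^*)$ forces every inequality in Step 2 to be tight. In particular $\sum_j\lambda^*_{\non,j}\,\partial g_{\non,j}(z^*)d=0$, and each term is nonpositive, so each $\partial g_{\non,j}(z^*)d=0$. To conclude $g_{\non,j}(\tilde z)=0$, I will use convexity of the feasible set. The midpoint $\bar z=(z^*+\tilde z)/2$ is also optimal. Applying the same argument to the pair $(z^*,\bar z)$ with the exact convexity gap of the SOC representation, strict inequality $g_{\non,j}(\tilde z)<0$ would give a strictly negative first-order term, unless $\tilde z_j$ lies on the same ray as $z^*_j$. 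A cleaner route is to use the fact that the Lagrangian $\mathcal L(\cdot,\lambda^*)$ restricted to the affine set is minimized at both points, together with complementarity of $\tilde z$. The argument is that $(\tilde z,\lambda^*)$ is also a saddle point, since the multiplier set is the same for all optimizers of a convex program. This gives $\lambda_{\non,j}^*g_{\non,j}(\tilde z)=0$, and positivity of $\lambda^*$ yields $g_{\non}(\tilde z)=0$. I will adopt the saddle-point route, justifying it through strong duality, which holds because $w^*$ is a KKT pair.

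\textbf{Step 4: uniqueness with fixed slack voltage.} If $z^*$ and $\tilde z$ are both optimal, their midpoint is optimal and hence exact by Step 3. But the exact DistFlow set is strictly nonconvex along rays: for each $j$, $P_j^2+Q_j^2=u_j\ell_j$ at $z^*$, $\tilde z$ and their midpoint forces $(P_j,Q_j,u_j,\ell_j)$ and $(\tilde P_j,\ldots)$ to be proportional. Combined with the linear equations~\eqref{eq:DistFlow-matrix:U} and fixed $u_s$, I will show by induction from the root in topological order that the proportionality constant is $1$: $u_0$ is equal for both, and equal $u_{\pi(j)}$ propagates to branch $j$. This gives $\tilde z=z^*$.
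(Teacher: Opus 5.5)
\textbf{There is a genuine gap in Step 3: the saddle-point route you adopt fails.} Steps 1 and 2 are sound in substance, and Step 4 is the hollow-feasible-set argument of \cite{gan2014exact} that the paper cites. Step 4 is fine once Step 3 is repaired.

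Saddle points obtained from a KKT pair, zero duality gap, and a single multiplier shared by all optimizers are theorems for convex programs in the functional sense, i.e.\ when $\mathcal L(\cdot,\lambda^*)$ is convex. With the constraint written as $g_{\non,j}=P_j^2+Q_j^2-u_j\ell_j$ this fails, as you noted yourself. The $(u_j,\ell_j)$ block of $\nabla^2(\lambda_\non^{*\top}g_\non)$ is $\bigl[\begin{smallmatrix}0&-\lambda_j^*\\-\lambda_j^*&0\end{smallmatrix}\bigr]$, which is indefinite. Concretely, take one branch with fixed injections ($P_1=1$, $Q_1=0$), a free slack voltage, and $f=\ell_1+(u_1-1)^2$. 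The KKT point has $\lambda_1^*=1/u_1^*>0$, but on $\mathcal Z\cap\{g_\lin=0\}$ the Lagrangian, as a function of $(u_1,\ell_1)$, has Hessian $\bigl[\begin{smallmatrix}2&-\lambda_1^*\\-\lambda_1^*&0\end{smallmatrix}\bigr]$. So $z^*$ is a saddle of $\mathcal L(\cdot,\lambda^*)$, not a minimizer, and the KKT pair gives no strong duality for this representation. The multipliers are not invariant either. If $\tilde z_j=s_jz_j^*$ branchwise, then $\nabla g_{\non,j}(\tilde z)=s_j\nabla g_{\non,j}(z^*)$, so the natural multiplier at $\tilde z$ is $\lambda_j^*/s_j$. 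Hence neither ``$(\tilde z,\lambda^*)$ is a saddle point'' nor $\lambda_{\non,j}^*g_{\non,j}(\tilde z)=0$ is justified.

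\textbf{Two ways to repair it.}

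\emph{Finish the ray argument you abandoned.} Euler's identity gives $\nabla g_{\non,j}(z^*)^\top z^*=2g_{\non,j}(z^*)=0$. So the tightness $\nabla g_{\non,j}(z^*)^\top d=0$, which you correctly derive from $\lambda^*_{\non,j}>0$, reads $x^{*\top}\tilde x=t^*\tilde t$ in your coordinates $x=(2P_j,2Q_j,u_j-\ell_j)$, $t=u_j+\ell_j$. Then
\[
t^*\tilde t=x^{*\top}\tilde x\le\|x^*\|\,\|\tilde x\|=t^*\|\tilde x\|\le t^*\tilde t .
\]
Since $t^*>0$, this forces $\|\tilde x\|=\tilde t$, i.e.\ $g_{\non,j}(\tilde z)=0$. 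No midpoint or duality is needed.

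\emph{Use the paper's route, which makes your saddle-point idea valid.} It adopts the convex representation you mention in Step 2: set $h_j:=(P_j^2+Q_j^2)/u_j-\ell_j$, convex for $u_j>0$, and $\mu_j^*:=u_j^*\lambda_{\non,j}^*>0$. Since $h_j(z^*)=0$, we have $\nabla g_{\non,j}(z^*)=u_j^*\nabla h_j(z^*)$. The KKT system then says $z^*$ minimizes the convex function $\widetilde{\mathcal L}:=f+\lambda_\lin^{*\top}g_\lin+\sum_j\mu_j^*h_j$ over $\mathcal Z$. Therefore $f(z^*)\le\widetilde{\mathcal L}(\tilde z)=f(\tilde z)+\sum_j\mu_j^*h_j(\tilde z)\le f(\tilde z)$. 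This gives global optimality and, at any relaxed optimizer, $h_j(\tilde z)=0$ in one line.

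\textbf{A smaller point in Step 2.} The chain $\nabla g_{\non,j}(z^*)^\top d\le g_{\non,j}(\tilde z)-g_{\non,j}(z^*)$ is false. It is the convexity inequality, and its remainder is $g_{\non,j}(d)$, which equals $-1$ for $d_u=d_\ell=1$, $d_P=d_Q=0$. Only $\nabla g_{\non,j}(z^*)^\top d\le0$ is needed. That does follow from convexity of $\{g_{\non,j}\le0\}$ by differentiating $g_{\non,j}$ along the segment from $z^*$ to $\tilde z$, and no separate case for $\ell_j^*=0$ is needed because $t^*=u_j^*+\ell_j^*>0$.
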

\begin{revblock}
\begin{proof}
Normalize the branch constraints as $h_j(z):=(P_j^2+Q_j^2)/u_j-\ell_j$, which are convex for $u_j>0$, and set $\mu_j^*:=u_j^*[\lambda_\non^*]_j>0$. At the exact point $z^*$, $\nabla g_{\non,j}(z^*)=u_j^*\nabla h_j(z^*)$, so~\eqref{eq:GE-equality} is the stationarity condition of the convex function
$\widetilde{\mathcal L}(z):=f(z)+(\lambda_\lin^*)^\top g_\lin(z)+\sum_j\mu_j^*h_j(z)$ over $\mathcal Z$. Hence $\widetilde{\mathcal L}(z)\ge\widetilde{\mathcal L}(z^*)=f(z^*)$ on $\mathcal Z$, and for every relaxed feasible $z$,
\[
 f(z^*)\le\widetilde{\mathcal L}(z)=f(z)+\sum_j\mu_j^*h_j(z)\le f(z).
\]
This proves global optimality for both problems and the relaxed KKT conditions. At a relaxed optimizer $\bar z$, equality throughout forces $h_j(\bar z)=0$ for every $j$, since $\mu_j^*>0$ and $h_j(\bar z)\le0$. Uniqueness under a fixed slack voltage follows from the hollow feasible set argument in~\cite[Thm.~3]{gan2014exact}.
\end{proof}
\end{revblock}

\rev{Theorem~\ref{thm:positive-multiplier-transfer} reduces exactness to the sign of $n$ scalars. The next statement shows how to read off one multiplier at a time from a suitable first-order direction. It} is consistent with the classical sensitivity interpretation of Lagrange multipliers under right-hand-side perturbations; see  \cite[Prop.~3.2.2]{bertsekas1997nonlinear}\cite[\rev{Cor.}~4.8]{pacaud2025sensitivity}. \rev{At a KKT point of~\eqref{eq:opf-distflow-split} there is a normal $\xi^*\in N_{\mathcal Z}(z^*)$ such that}
\begin{equation}\label{eq::first::order::optimality}
 \nabla f(z^*)+\mathrm Dg_\lin(z^*)^\top\lambda_\lin^*
 +\mathrm Dg_\non(z^*)^\top\lambda_\non^*+\xi^*=0.
\end{equation}
\begin{prop}[Individual multiplier formula]
\label{prop:isolating-direction}
At a KKT point satisfying~\eqref{eq:GE-equality}, suppose $\delta^j$ obeys
\begin{equation}\label{eq:isolating-direction}
 \mathrm Dg_\lin(z^*)\delta^j=0,\qquad
 \mathrm Dg_\non(z^*)\delta^j=\eta\mathbf e_j,\quad\eta\ne0.
\end{equation}
Then
\begin{equation}\label{eq:isolating-multiplier}
 [\lambda_\non^*]_j
 =-\frac{\langle\nabla f(z^*)+\xi^*,\delta^j\rangle}{\eta}.
\end{equation}
If
\begin{equation}\label{eq:isolating-objective-condition}
 \langle\nabla f(z^*),\delta^j\rangle<0,\quad
 \delta^j\in T_{\mathcal Z}(z^*),\quad\eta>0,
\end{equation}
then $[\lambda_\non^*]_j>0$.
\end{prop}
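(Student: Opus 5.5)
The plan is to pair the stationarity identity~\eqref{eq::first::order::optimality} with the direction $\delta^j$ and read off the $j$th multiplier. At a KKT point satisfying~\eqref{eq:GE-equality}, the first block of the generalized equation provides a normal $\xi^*\in N_{\mathcal Z}(z^*)$ for which~\eqref{eq::first::order::optimality} holds. The other two blocks hold automatically, since $z^*$ satisfies $g(z^*)=0$ and the multiplier sets are the full spaces $\R^{3n}$ and $\R^n$. Taking the inner product of~\eqref{eq::first::order::optimality} with $\delta^j$ gives
\[
 \langle\nabla f(z^*)+\xi^*,\delta^j\rangle
 +(\lambda_\lin^*)^\top\mathrm Dg_\lin(z^*)\delta^j
 +(\lambda_\non^*)^\top\mathrm Dg_\non(z^*)\delta^j=0.
\]
By~\eqref{eq:isolating-direction}, the linear term vanishes and the nonlinear term reduces to $\eta[\lambda_\non^*]_j$. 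Dividing by $\eta\ne0$ then yields~\eqref{eq:isolating-multiplier}. No sign information is used in this step.

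For the sign claim, I would bound the numerator in~\eqref{eq:isolating-multiplier}. Since $\mathcal Z$ is closed and convex by Assumption~\ref{ass:convex-exactness}, and since $\xi^*\in N_{\mathcal Z}(z^*)$ and $\delta^j\in T_{\mathcal Z}(z^*)$, normal--tangent polarity~\eqref{eq:normal-tangent-polarity} gives $\langle\xi^*,\delta^j\rangle\le0$. Combined with $\langle\nabla f(z^*),\delta^j\rangle<0$, this makes the numerator $\langle\nabla f(z^*)+\xi^*,\delta^j\rangle$ strictly negative. Because $\eta>0$, it follows that $[\lambda_\non^*]_j>0$.

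The only point that needs care is the first step: the normal $\xi^*$ appearing in~\eqref{eq::first::order::optimality} must be a genuine element of $N_{\mathcal Z}(z^*)$. This is what lets polarity be applied to the same vector that was eliminated in the stationarity equation. The point follows by unpacking the product-set normal cone in~\eqref{eq:GE-equality} as $N_{\mathcal Z}(z^*)\times\{0\}\times\{0\}$. Everything else is linear algebra.
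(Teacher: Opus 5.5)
Your proposal is correct and follows essentially the same route as the paper: take the inner product of the stationarity identity with $\delta^j$ to isolate $\eta[\lambda_\non^*]_j$, then use normal--tangent polarity to get $\langle\xi^*,\delta^j\rangle\le0$ and conclude the sign. Your remark that the product-set normal cone in~\eqref{eq:GE-equality} unpacks as $N_{\mathcal Z}(z^*)\times\{0\}\times\{0\}$, so that $\xi^*$ is a genuine element of $N_{\mathcal Z}(z^*)$, is a detail the paper leaves implicit.
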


\begin{proof}
Take the inner product of~\eqref{eq::first::order::optimality} with $\delta^j$\rev{. Then}~\eqref{eq:isolating-direction} gives the formula~\eqref{eq:isolating-multiplier}\rev{, and} the normal-tangent polarity~\eqref{eq:normal-tangent-polarity} gives \rev{$\langle\xi^*,\delta^j\rangle\le0$, hence the sign under}~\eqref{eq:isolating-objective-condition}.
\end{proof}
\rev{A direction satisfying~\eqref{eq:isolating-direction} is a multiplier test direction: it preserves the affine DistFlow equations and perturbs exactly one nonlinear residual. It need not be feasible for the relaxation. The two classical certificates differ precisely in how such a direction is built, which we show next.}

\subsection{Fully Dispatchable Systems}
\label{sec:fully-controllable}
A system is fully dispatchable when all nodal injections $(p,q)$ are controllable. Dispatchable injections allow \rev{a test direction} to be supported on one branch and its incident buses. \rev{Inspired} by the voltage-preserving perturbation in~\cite[Thm.~1]{farivar2013branch}, we reduce the branch current and adjust the incident powers while holding voltages fixed. Let $i:=\pi(j)$ and $\mathbf e_{ij}:=\mathbf e_{\pi(j)}+\mathbf e_j$\rev{, with $\mathbf e_0:=0\in\R^n$ because the slack injection is not part of $(p,q)$.}
\begin{lemm}[Voltage-preserving direction]
\label{prop:voltage-preserving-direction}
At a feasible point $z\in\mathcal R_\mathrm{svs}$, define
\begin{align}
\delta^j =& \;(\delta^j_P,\delta^j_Q, \delta^j_u,\delta^j_\ell,\delta^j_p,\delta^j_q)\notag\\
=&\;(\tfrac{\rr_j}{2}\mathbf e_j,\tfrac{\xx_j}{2}\mathbf e_j,0,-\mathbf e_j,-\tfrac{\rr_j}{2}\mathbf e_{ij},-\tfrac{\xx_j}{2}\mathbf e_{ij}).\label{eq:voltage-preserving-direction}
\end{align}
Then
\begin{equation*}
 \mathrm Dg_\lin(z)\delta^j=0,\qquad
 \mathrm Dg_\non(z)\delta^j=\eta_j\mathbf e_j,\quad
\end{equation*}
with $\eta_j>0$.
\end{lemm}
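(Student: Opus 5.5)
The argument is a direct computation: apply the Jacobian blocks in $\mathrm Dg(z;u_s)$ to $\delta^j$ row by row, first the three affine blocks~\eqref{eq:DistFlow-matrix:P}--\eqref{eq:DistFlow-matrix:U}, then the nonlinear block~\eqref{eq:DistFlow-matrix:nonlinear}. The last step is to sign the resulting scalar $\eta_j$ using Lemma~\ref{lem:strong-branch-limit}.

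\emph{Active-power block.} Its linearization is $\hat C\delta_P+\hat C_f\mathbf R\delta_\ell-\delta_p$. With $\delta_P=\tfrac{\rr_j}{2}\mathbf e_j$ and $\delta_\ell=-\mathbf e_j$ we have $\hat C\mathbf e_j=\hat C_f\mathbf e_j-\mathbf e_j$, and $\hat C_f\mathbf e_j=\mathbf e_{\pi(j)}$ (this is $0$ when $\pi(j)=0$, consistent with the convention $\mathbf e_0:=0$). The first two terms therefore give $\tfrac{\rr_j}{2}(\mathbf e_{\pi(j)}-\mathbf e_j)-\rr_j\mathbf e_{\pi(j)}=-\tfrac{\rr_j}{2}(\mathbf e_{\pi(j)}+\mathbf e_j)$. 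This is cancelled exactly by $-\delta_p=\tfrac{\rr_j}{2}\mathbf e_{ij}$.

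\emph{Reactive-power block.} The identical computation with $\xx_j$ in place of $\rr_j$ shows that this block also vanishes.

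\emph{Voltage block.} Since $\delta_u=0$ and the $u_s$ column is not perturbed, the row reduces to $2(\mathbf R\delta_P+\mathbf X\delta_Q)+\mathbf Z^2\delta_\ell=(\rr_j^2+\xx_j^2)\mathbf e_j-|\zz_j|^2\mathbf e_j=0$. Together the three blocks give $\mathrm Dg_\lin(z)\delta^j=0$.

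\emph{Nonlinear block.} Its row is $2\mathbf P\delta_P+2\mathbf Q\delta_Q-\mathbf L\delta_u-\mathbf U\delta_\ell$, which is diagonal in structure and supported only on coordinate $j$. It equals $\eta_j\mathbf e_j$ with
\[
 \eta_j=\rr_jP_j+\xx_jQ_j+u_j .
\]
The main (and only) obstacle is showing $\eta_j>0$, because $P_j,Q_j$ can be negative under reverse flow. The plan is to use the voltage equation~\eqref{eq::distflow}, $u_{\pi(j)}=u_j+2(\rr_jP_j+\xx_jQ_j)+|\zz_j|^2\ell_j$, to write
\[
 2\eta_j=u_j+u_{\pi(j)}-|\zz_j|^2\ell_j=(u_j-|\zz_j|^2\ell_j)+u_{\pi(j)}.
\]
Lemma~\ref{lem:strong-branch-limit} gives $u_j-|\zz_j|^2\ell_j>0$ on $\mathcal R_{\mathrm{svs}}$, and $u_{\pi(j)}>0$ since either $\pi(j)=0$ with $u_s>0$ or $u\in\R^n_{++}$. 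Hence $\eta_j>0$, which completes the proof.
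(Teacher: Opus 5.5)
Your proof is correct and is essentially the paper's argument: substitute $\delta^j$ into the affine blocks using $\hat C\mathbf e_j=\mathbf e_{\pi(j)}-\mathbf e_j$ and $\hat C_f\mathbf e_j=\mathbf e_{\pi(j)}$, then read off $\eta_j=u_j+\rr_jP_j+\xx_jQ_j$ from the nonlinear block. You then rewrite $\eta_j$ via the voltage equation as $\tfrac12(u_{\pi(j)}+u_j-|\zz_j|^2\ell_j)$, which is positive by $u_{\pi(j)}>0$ and Lemma~\ref{lem:strong-branch-limit}, and you additionally handle the case $\pi(j)=0$ explicitly through the convention $\mathbf e_0:=0$.
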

\begin{proof}
Substitution using $\hat C\mathbf e_j=\mathbf e_i-\mathbf e_j$ and $\hat C_f\mathbf e_j=\mathbf e_i$ preserves the affine equations~\eqref{eq:DistFlow-matrix:P}-\eqref{eq:DistFlow-matrix:U}. Only nonlinear residual $j$ changes, with
\begin{align*}
 \eta_j=u_j+\rr_jP_j+\xx_jQ_j=\tfrac12(u_i+u_j-|\zz_j|^2\ell_j).
\end{align*}
Then, $\eta_j>0$ follows from $u_{\pi(j)}>0$ and Lemma~\ref{lem:strong-branch-limit}.
\end{proof}
The associated objective decrease is
\begin{align}\label{eq:fully-controllable-monotone-objective}
 d_j^*:=&-\langle\nabla f(z^*),\delta^j\rangle\notag \\=&\partial_{\ell_j}\varphi^*
   +\tfrac{\rr_j}{2}(\partial_{p_i}\varphi^*+\partial_{p_j}\varphi^*)\notag\\
   &\qquad\qquad+\tfrac{\xx_j}{2}(\partial_{q_i}\varphi^*+\partial_{q_j}\varphi^*).
\end{align}
with  $\varphi^*:=f(z^*)=\varphi(u^*,\ell^*,p^*,q^*;p_0(z^*),q_0(z^*))$. \rev{for $i=0$, the derivatives $\partial_{p_i}\varphi^*,\partial_{q_i}\varphi^*$ refer to the slack-injection arguments $p_0,q_0$. Since $\delta_u^j=0$, no voltage-cost sign is needed.}

\begin{thm}[Exactness in dispatchable systems]
\label{thm:fully-controllable-exactness}
Under Assumption~\ref{ass:convex-exactness}, let $w^*=(z^*,\lambda_\lin^*,\lambda_\non^*)$ be a KKT point with fixed slack voltage with $z^*\in\mathcal R_{\mathrm{svs}}$. If, for every $j\in\mathcal N$,
\begin{equation*}
 \delta^j\in T_{\mathcal Z}(z^*),\qquad d_j^*>0,
\end{equation*}
then
\begin{equation*}
 [\lambda_\non^*]_j\ge \frac{d_j^*}{\eta_j^*}>0,
\end{equation*}
and the conclusions of Theorem~\ref{thm:positive-multiplier-transfer} hold.
\end{thm}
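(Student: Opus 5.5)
The plan is to apply Proposition~\ref{prop:isolating-direction} once for each branch $j\in\mathcal N$, using the voltage-preserving direction $\delta^j$ of Lemma~\ref{prop:voltage-preserving-direction}, and then to close with Theorem~\ref{thm:positive-multiplier-transfer}.

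First, since $z^*\in\mathcal R_{\mathrm{svs}}$ is feasible, Lemma~\ref{prop:voltage-preserving-direction} gives $\mathrm Dg_\lin(z^*)\delta^j=0$ and $\mathrm Dg_\non(z^*)\delta^j=\eta_j^*\mathbf e_j$. Here
\[
\eta_j^*=\tfrac12\bigl(u^*_{\pi(j)}+u^*_j-|\zz_j|^2\ell^*_j\bigr)>0.
\]
So $\delta^j$ meets~\eqref{eq:isolating-direction} with $\eta=\eta_j^*$.

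The slack voltage is fixed, so $u_s$ is not a decision variable. This means the KKT stationarity~\eqref{eq::first::order::optimality} holds in $z$ alone, with some normal vector $\xi^*\in N_{\mathcal Z}(z^*)$. Formula~\eqref{eq:isolating-multiplier} then gives
\[
[\lambda_\non^*]_j=\frac{-\langle\nabla f(z^*),\delta^j\rangle-\langle\xi^*,\delta^j\rangle}{\eta_j^*}.
\]

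Second, we bound the two terms in the numerator. The first term equals $d_j^*$ by definition. For this to hold, the gradient of $f$ must be computed through the composition $\varphi(u,\ell,p,q;p_0(z),q_0(z))$, with $p_0,q_0$ taken from~\eqref{eq::slack::injections}. I will verify this with a chain-rule check. Along $\delta^j$ we have $\delta p_0=\rr_j(-1)-\mathbf1^\top\delta p$. Since $\mathbf1^\top\delta p=-\tfrac{\rr_j}{2}(1+[\pi(j)\ne0])$, this gives $\delta p_0=-\tfrac{\rr_j}{2}$ when $\pi(j)=0$ and $\delta p_0=0$ otherwise. The same computation applies to $q_0$.

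This is exactly the convention in~\eqref{eq:fully-controllable-monotone-objective}: for $i=0$, the terms $\partial_{p_i}\varphi^*$ and $\partial_{q_i}\varphi^*$ refer to the slack-injection arguments. Because $\delta_u^j=0$, no voltage-derivative term appears.

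For the second term, the assumption $\delta^j\in T_{\mathcal Z}(z^*)$ and the polarity~\eqref{eq:normal-tangent-polarity} give $\langle\xi^*,\delta^j\rangle\le0$. Combining the two bounds yields $[\lambda_\non^*]_j\ge d_j^*/\eta_j^*>0$.

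Finally, this holds for every $j$, so $\lambda_\non^*\in\R^n_{++}$. Theorem~\ref{thm:positive-multiplier-transfer} then gives global optimality, exactness, and uniqueness, the last because the slack voltage is fixed.

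The only delicate step is the bookkeeping for $\langle\nabla f(z^*),\delta^j\rangle$. It must match $-d_j^*$, including the root-branch case in which $\mathbf e_0=0$ and the change lands in $p_0,q_0$ instead. I would check this case explicitly against the sign of the $\rr_j\ell_j$ loss term in $p_0$.
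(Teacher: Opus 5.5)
Your proposal is correct and follows the paper's proof: you combine Lemma~\ref{prop:voltage-preserving-direction} with the multiplier formula of Proposition~\ref{prop:isolating-direction}, drop the term $-\langle\xi^*,\delta^j\rangle\ge0$ by polarity, and conclude with Theorem~\ref{thm:positive-multiplier-transfer}. Your chain-rule check that $\delta p_0=0$ for $\pi(j)\ne0$ and $\delta p_0=-\rr_j/2$ for $\pi(j)=0$ is right, and it confirms the closed form~\eqref{eq:fully-controllable-monotone-objective}, which the paper states without derivation.
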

\begin{proof}
Proposition~\ref{prop:isolating-direction}, Lemma~\ref{prop:voltage-preserving-direction},  and the normal-tangent polarity~\eqref{eq:normal-tangent-polarity} give
$$[\lambda_\non^*]_j=\frac{d_j^*-\langle\xi^*,\delta^j\rangle}{\eta_j^*}\ge \frac{d_j^*}{\eta_j^*}.$$
\end{proof}
\begin{rem}[Sufficient objective conditions]
\label{rem:fully-controllable-objective}
Nonnegative current and incident-injection derivatives make every term in~\eqref{eq:fully-controllable-monotone-objective} nonnegative. Strictness holds if the current derivative is positive or an endpoint injection derivative has a positive coefficient: $\rr_j>0$ for active power and $\xx_j>0$ for reactive power.
\end{rem}
\begin{rem}[Compatibility with operating bounds]
\label{rem:fully-controllable-bounds}
For box~\eqref{eq:operating-set}, tangent membership requires
\[
 \begin{cases}
 z_k^*>[z_{\text{\normalfont\scshape min}}]_k,&[\delta^j]_k<0,\\
 z_k^*<[z_{\text{\normalfont\scshape max}}]_k,&[\delta^j]_k>0.
 \end{cases}
\]
Hence the lower current bound should be inactive. For $\rr_j>0$, the upper $P_j$ bound and incident non-slack lower $p$ bounds must also be inactive. $\xx_j>0$ gives the analogous reactive requirements. Active voltage bounds cause no obstruction.
\end{rem}
\subsection{Passive Systems}
\label{sec:passive-fixed-injection}
\rev{When no injection can move, the voltage-preserving direction is unavailable, because it requires injection headroom at both ends of the branch. The test direction must instead keep all injections fixed and let the voltages respond, which is exactly the fixed-injection sensitivity of Section~\ref{sec::condensed}.} In a passive system, all non-slack injections $(p,q)=(p^{\mathrm{fix}},q^{\mathrm{fix}})$ are fixed. With slack voltage fixed, regularity isolates a feasible solution and supplies a zero-normal multiplier, even at active operating bounds.

Set $x=(P,Q,u)$ and
\[
 \mathcal Z^{\mathrm{fix}}
 :=\{(x,\ell):(x,\ell,p^{\mathrm{fix}},q^{\mathrm{fix}})\in\mathcal Z\}.
\]
Let $z\rev{^*} = (x^*,\ell^*; p^{\mathrm{fix}},q^{\mathrm{fix}})$ be a feasible point \rev{of} the DistFlow equations~\eqref{eq:DistFlow-matrix},
\begin{equation}\label{eq:passive-reduced-jacobian}
 \mathbf M_\ell^*=-\partial_\ell\hat g_\non(\ell^*,p^{\mathrm{fix}},q^{\mathrm{fix}};u_s)\end{equation}
be the reduced Jacobian matrix, and
\begin{equation}\label{eq:dxdl}
\begin{aligned}
 c_\ell^*\hspace{-1pt}:=\hspace{-1pt}\nabla_\ell\varphi^*\hspace{-2pt}-\hspace{-1pt}(\hat C^{-\top}\hat{\mathbf Z})^\top\nabla_u\varphi^*\hspace{-1pt}+\hspace{-1pt}\partial_{p_0}\varphi^*\rr\hspace{-1pt}+\hspace{-1pt}\partial_{q_0}\varphi^*\xx,
\end{aligned}
\end{equation}
be the effective reduced objective gradient, both \rev{evaluated} at the point $z^*$. \rev{By~\eqref{eq:affine-elimination-slack}, $c_\ell^*=\nabla_\ell f(z^*)+(\partial_\ell x^\star)^\top\nabla_xf(z^*)$ is the gradient of the objective along the fixed-injection solution map.}
\begin{prop}[Multiplier for passive systems]
\label{prop:passive-zero-normal}
Let $z^*\in\mathcal R$ be feasible with fixed nodal injections and slack voltage. There is a KKT point with zero normal in $N_{\mathcal Z^{\mathrm{fix}}}(x^*,\ell^*)$ and
\begin{equation}\label{eq:passive-multiplier-identity}
 \lambda_\non^*=(\mathbf M_\ell^*)^{-\top}c_\ell^*.
\end{equation}
This remains valid at active bounds.
\end{prop}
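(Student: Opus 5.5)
We construct the multipliers explicitly and then verify stationarity, with the normal component $\xi^*$ set to zero. In a passive system the decision variables are $(x,\ell)=(P,Q,u,\ell)$ and the injections are frozen. The stationarity condition \eqref{eq::first::order::optimality} therefore reduces to its $(x,\ell)$ blocks:
\begin{equation*}
 \nabla_{(x,\ell)}f(z^*)+\partial_{(x,\ell)}g_\lin(z^*)^\top\lambda_\lin+\partial_{(x,\ell)}g_\non(z^*)^\top\lambda_\non=0 .
\end{equation*}
This is a square linear system of $4n$ equations in the $4n$ unknowns $(\lambda_\lin,\lambda_\non)$. Its coefficient matrix is exactly $J(z^*;u_s)^\top$. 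Since $z^*\in\mathcal R$, $J$ is nonsingular, so the system has a unique solution. With $\xi^*=0\in N_{\mathcal Z^{\mathrm{fix}}}(x^*,\ell^*)$, which always holds because $(x^*,\ell^*)\in\mathcal Z^{\mathrm{fix}}$, the triple $(z^*,\lambda_\lin,\lambda_\non)$ satisfies the generalized equation \eqref{eq:GE-equality} restricted to the passive variables. This holds whether or not bounds are active. The reason is that the normal cone always contains $0$, and no constraint qualification beyond regularity of $J$ is needed. This settles existence and the claim about active bounds.

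To identify $\lambda_\non$, we eliminate $\lambda_\lin$ by the same block elimination used in Section~\ref{sec::condensed}. We split the stationarity system into its $x$-rows and its $\ell$-rows. Since $\partial_x g_\lin$ has determinant $1$, the $x$-rows give
\begin{equation*}
 \lambda_\lin=-(\partial_xg_\lin)^{-\top}\bigl(\nabla_xf+(\partial_xg_\non)^\top\lambda_\non\bigr).
\end{equation*}
We substitute this into the $\ell$-rows and use $\partial_\ell x^\star=-(\partial_xg_\lin)^{-1}\partial_\ell g_\lin$. The $\ell$-rows then become
\begin{equation*}
 \nabla_\ell f+(\partial_\ell x^\star)^\top\nabla_xf+\bigl(\partial_\ell g_\non+\partial_xg_\non\,\partial_\ell x^\star\bigr)^\top\lambda_\non=0 .
\end{equation*}
The bracket is the chain-rule derivative $\partial_\ell\hat g_\non=-\mathbf M_\ell^*$. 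The first two terms are $c_\ell^*$, by the remark after \eqref{eq:dxdl}. Hence $c_\ell^*-(\mathbf M_\ell^*)^\top\lambda_\non=0$. By \eqref{eq:subtree-det-reduced}, $\mathbf M_\ell^*$ is nonsingular, and this gives \eqref{eq:passive-multiplier-identity}.

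The main care point is confirming that $c_\ell^*$ as written in \eqref{eq:dxdl} matches $\nabla_\ell f+(\partial_\ell x^\star)^\top\nabla_xf$. The objective depends on $x$ only through $u$ and through $(p_0,q_0)$, and $p_0,q_0$ depend on $\ell$ directly via \eqref{eq::slack::injections}. So we must track three contributions. The first is the $u$-block of $\partial_\ell x^\star$, namely $-\hat C^{-\top}\hat{\mathbf Z}$, which produces the voltage term. The second is the explicit $\rr,\xx$ dependence of $p_0,q_0$, which produces $\partial_{p_0}\varphi^*\rr+\partial_{q_0}\varphi^*\xx$. The third is that $P,Q$ do not enter $\varphi$ directly, so their blocks contribute nothing. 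The paper already records this identity, so we only need to check that no $P,Q$ dependence is hidden in $p_0,q_0$; there is none, because \eqref{eq::slack::injections} eliminated it. Finally, the resulting KKT point is the one referred to in the statement. Uniqueness of $(\lambda_\lin,\lambda_\non)$ given $\xi^*=0$ follows from nonsingularity of $J$.
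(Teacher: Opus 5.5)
Your proposal is correct and follows the paper's proof. Both arguments solve $J(z^*)^\top(\lambda_\lin,\lambda_\non)=-\nabla_{(x,\ell)}f(z^*)$ with zero normal, using nonsingularity of $J$ and $0\in N_{\mathcal Z^{\mathrm{fix}}}$ even at active bounds, and then eliminate $\lambda_\lin$ through the affine block to obtain $c_\ell^*-(\mathbf M_\ell^*)^\top\lambda_\non=0$. The paper adds two remarks that your restriction to $(x,\ell)$ covers implicitly: nonsingularity of $J$ makes $z^*$ an isolated feasible point, hence a local minimizer, and if $(p,q)$ are kept as variables, the normal cones of their singleton sets absorb their stationarity components.
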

\begin{proof}
The square Jacobian $J(z^*)$ is nonsingular, so the inverse function theorem \rev{implies} that $z^*$ is \rev{an} isolated feasible solution~\cite[Thm.~1A.1]{dontchev2009implicit}, and thus a local minimizer. For the zero normal, solve
\begin{equation*}
 J(z^*)^\top
 \begin{bmatrix}\lambda_\lin^*\\\lambda_\non^*\end{bmatrix}
 =-\nabla_{(x,\ell)}f(z^*).
\end{equation*}
\rev{Eliminating $\lambda_\lin^*$ through the affine block, as in Section~\ref{sec::condensed},}
yields $c_\ell^*-\mathbf M_\ell^{\rev{*}\top}\lambda_\non^*=0$.
If fixed injections remain variables, normals to their singleton sets absorb their stationarity components. Zero belongs to every operating normal cone, including at active bounds.
\end{proof}
\begin{thm}[Exactness certificate in passive systems]
\label{thm:passive-exactness}
Under Assumption~\ref{ass:convex-exactness}, let $z^*$ be feasible with fixed nodal injections and slack voltage. Suppose either
\begin{enumerate}[label=\arabic*)]
\item $z^*\in\mathcal R_{\mathrm{svs}}\cap\mathcal F$ and $c_\ell^*\in\R_{++}^n$;
\item $z^*\in\mathcal R_{\mathrm{svs}}\cap\mathcal C$ and $c_\ell^*=\alpha\rr+\beta\xx\in\R_{++}^n$, with $\alpha,\beta\ge0$.
\end{enumerate}
Then $\lambda_\non^*$ in~\eqref{eq:passive-multiplier-identity} is strictly positive, and Theorem~\ref{thm:positive-multiplier-transfer} applies.
\end{thm}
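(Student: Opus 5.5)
The plan is to combine the closed-form multiplier of Proposition~\ref{prop:passive-zero-normal} with the sign results of Section~\ref{sec:regularity-sensitivity}, and then invoke Theorem~\ref{thm:positive-multiplier-transfer}.

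First, since $z^*\in\mathcal R_{\mathrm{svs}}\subseteq\mathcal R$ and the nodal injections and slack voltage are fixed, Proposition~\ref{prop:passive-zero-normal} supplies a KKT point $w^*$ of~\eqref{eq:opf-distflow-split} with zero operating normal. Its nonlinear multiplier is $\lambda_\non^*=(\mathbf M_\ell^*)^{-\top}c_\ell^*$, and this holds even when operating bounds are active. It therefore remains to show $\lambda_\non^*\in\R^n_{++}$ in each case.

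\emph{Case 1).} By Proposition~\ref{thm:strong-region-M}, $\mathbf M_\ell^*$ is a nonsingular $M$-matrix, so $(\mathbf M_\ell^*)^{-\top}\in\R^{n\times n}_+$. Weak nonnegativity of $\lambda_\non^*$ is then immediate. For strictness, I would use that a nonsingular $M$-matrix has an inverse with strictly positive diagonal. This follows, for instance, from $\mathbf M_\ell^{*}(\mathbf M_\ell^*)^{-1}=\mathbf I$: the diagonal entry of $\mathbf M_\ell^*$ is positive, its off-diagonal entries are nonpositive, and the inverse is nonnegative, so the diagonal entry of the inverse is at least $1/[\mathbf M_\ell^*]_{jj}>0$. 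Then
\[
[\lambda_\non^*]_j\ge[(\mathbf M_\ell^*)^{-1}]_{jj}[c_\ell^*]_j>0,
\]
because all cross terms are nonnegative and $c_\ell^*\in\R^n_{++}$.

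\emph{Case 2).} Here $c_\ell^*=\xi=\alpha\rr+\beta\xx\in\R^n_{++}$ with $\alpha,\beta\ge0$. Proposition~\ref{prop:gan-transformed-M}, in particular~\eqref{eq:gan-strict-weighted-inverse}, applied at $z^*\in\mathcal R_{\mathrm{svs}}\cap\mathcal C$, gives $\lambda_\non^*=\mathbf M_\ell^{*-\top}\xi\in\R^n_{++}$ directly. The only care needed is to confirm that the $\mathbf M_\ell$ in that proposition is the same matrix as $\mathbf M_\ell^*$ in~\eqref{eq:passive-reduced-jacobian}. Both are $-\partial_\ell\hat g_\non$ evaluated at $z^*$, so they coincide.

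With $\lambda_\non^*\in\R^n_{++}$ in either case, Theorem~\ref{thm:positive-multiplier-transfer} applies under Assumption~\ref{ass:convex-exactness}, and it yields global optimality, exactness, and, since the slack voltage is fixed, uniqueness.

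I expect the main obstacle to be the strictness step rather than the weak sign. In case 2 this difficulty is delegated to the appendix argument behind~\eqref{eq:gan-strict-weighted-inverse}. In case 1 it reduces to the positive-diagonal property of $M$-matrix inverses, which I would verify explicitly as sketched above.
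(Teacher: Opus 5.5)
Your proposal is correct and follows the paper's proof. Proposition~\ref{prop:passive-zero-normal} supplies the closed-form multiplier, case 1) uses the nonnegative inverse with positive diagonal from Proposition~\ref{thm:strong-region-M}, case 2) uses~\eqref{eq:gan-strict-weighted-inverse}, and Theorem~\ref{thm:positive-multiplier-transfer} then applies. Your explicit bound $[(\mathbf M_\ell^*)^{-1}]_{jj}\ge 1/[\mathbf M_\ell^*]_{jj}$ just fills in the positive-diagonal step that the paper asserts without proof.
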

\begin{proof}
Proposition~\ref{prop:passive-zero-normal} supplies \rev{the identity}~\eqref{eq:passive-multiplier-identity}. In case 1), Proposition~\ref{thm:strong-region-M} makes $(\mathbf M_\ell^*)^{-1}$ nonnegative with positive diagonal. In case 2), use the strict weighted-inverse property~\eqref{eq:gan-strict-weighted-inverse}. Both give $(\mathbf M^*_\ell)^{-\top}c_\ell^*\in\R^n_{++}$.
\end{proof}
This fixed-injection certificate requires no tangent direction or interior operating point. \rev{When some injections or the slack voltage become decision variables, the zero-normal selection is no longer available, but the same quantity still bounds the multipliers from below along the corresponding test direction.}
\begin{col}[Multiplier lower bound]
\label{col:passive-multiplier-bound}
Under Assumption~\ref{ass:convex-exactness}, let $w^*=(z^*,\lambda_\lin^*,\lambda_\non^*)$ be a KKT point \rev{of~\eqref{eq:opf-distflow-split}} with $z^*\in\mathcal R_{\mathrm{svs}}$, \rev{where some injections $(p,q)$ or the slack voltage $u_s$ may be decision variables}. For \rev{each} $j\in\mathcal N$, define the direction
\begin{align}
\delta^j =& \;(\delta^j_P,\delta^j_Q,\delta^j_u,\delta^j_\ell,\delta^j_p,\delta^j_q)\notag\\
=&\;(\Gamma\mathbf R \delta_\ell^j,\, \Gamma\mathbf X\delta_\ell^j,\, \mathbf K_u^*\mathbf e_j,\,\delta_\ell^j,0,0)\rev{,\qquad \delta^j_{u_s}=0,}
\label{eq::direction::passive}
\end{align}
with
\begin{align*}
 \delta_\ell^j=-(\mathbf M_\ell^*)^{-1}\mathbf e_j\rev{,\qquad \mathbf K_u^*:=\hat C^{-\top}\hat{\mathbf Z}(\mathbf M_\ell^*)^{-1}}.
\end{align*}
Then \rev{$\mathrm Dg_\lin\,\delta^j=0$ and $\mathrm Dg_\non\,\delta^j=\mathbf e_j$. If $\delta^j\in T_{\mathcal Z}(z^*)$, then}
\begin{equation*}
 \rev{[\lambda_\non^*]_j}\ge\rev{[}(\mathbf M_\ell^*)^{-\top}c_\ell^*\rev{]_j}.
\end{equation*}
\rev{If, in addition, either condition of Theorem~\ref{thm:passive-exactness} holds and $\delta^j\in T_{\mathcal Z}(z^*)$ for every $j\in\mathcal N$}, then
\[
 \lambda_\non^*\in\R^n_{++},
\]
and Theorem~\ref{thm:positive-multiplier-transfer} applies\rev{, with uniqueness only for fixed slack voltage. Under either condition, the direction satisfies}
\[
\delta_u^j\in\R^n_{+}, \quad \rr^\top \delta_\ell^j\rev{\le0},\quad\xx^\top \delta_\ell^j\rev{\le0}, \quad\mathrm{and} \quad
\delta_{p_0}^j,\delta_{q_0}^j\leq 0\rev{,}
\]
\rev{where $\delta_{p_0}^j=\rr^\top\delta_\ell^j$ and $\delta_{q_0}^j=\xx^\top\delta_\ell^j$ are the induced slack-injection changes.}
\end{col}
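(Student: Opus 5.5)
The plan is to verify that $\delta^j$ in \eqref{eq::direction::passive} is a multiplier test direction in the sense of \eqref{eq:isolating-direction} with $\eta=1$, and then apply Proposition~\ref{prop:isolating-direction}. The direction is the derivative of the fixed-injection solution map along $\ell$, so linearity of the affine equations should do most of the work.

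\emph{Affine equations.} With $\delta_p^j=\delta_q^j=0$ and $\delta_{u_s}^j=0$, the affine rows are
\[
\hat C\delta_P^j+\hat C_f\mathbf R\delta_\ell^j,\qquad
\hat C\delta_Q^j+\hat C_f\mathbf X\delta_\ell^j,\qquad
2(\mathbf R\delta_P^j+\mathbf X\delta_Q^j)+\mathbf Z^2\delta_\ell^j-\hat C^\top\delta_u^j.
\]
I will not expand these by hand. Instead, $(\delta_P^j,\delta_Q^j,\delta_u^j)=\partial_\ell x^\star\,\delta_\ell^j$ by \eqref{eq:affine-elimination-slack}. The voltage block is $-\hat C^{-\top}\hat{\mathbf Z}\delta_\ell^j=\hat C^{-\top}\hat{\mathbf Z}(\mathbf M_\ell^*)^{-1}\mathbf e_j=\mathbf K_u^*\mathbf e_j$, which matches the stated $\delta_u^j$. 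Since $x^\star$ solves the affine equations identically, its derivative satisfies $\mathrm Dg_\lin\delta^j=0$.

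\emph{Nonlinear row.} By the chain rule, $\mathrm Dg_\non\delta^j=\partial_\ell\hat g_\non\,\delta_\ell^j=-\mathbf M_\ell^*\delta_\ell^j=\mathbf e_j$. Nonsingularity of $\mathbf M_\ell^*$ comes from $z^*\in\mathcal R$ and \eqref{eq:subtree-det-reduced}.

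\emph{Multiplier bound.} Proposition~\ref{prop:isolating-direction} with $\eta=1$ gives $[\lambda_\non^*]_j=-\langle\nabla f,\delta^j\rangle-\langle\xi^*,\delta^j\rangle$. Tangency and \eqref{eq:normal-tangent-polarity} give $-\langle\xi^*,\delta^j\rangle\ge0$. The slack injections are dependent through \eqref{eq::slack::injections}, so the induced changes are $\delta_{p_0}^j=\rr^\top\delta_\ell^j$ and $\delta_{q_0}^j=\xx^\top\delta_\ell^j$, because $\delta_p^j=\delta_q^j=0$. Hence $\langle\nabla f,\delta^j\rangle=(c_\ell^*)^\top\delta_\ell^j=-[(\mathbf M_\ell^*)^{-\top}c_\ell^*]_j$, using the definition \eqref{eq:dxdl} and $(\partial_\ell x^\star)^\top\nabla_x f$. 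This yields $[\lambda_\non^*]_j\ge[(\mathbf M_\ell^*)^{-\top}c_\ell^*]_j$. The step I expect to need the most care is this bookkeeping: confirming that the chain-rule gradient of $\varphi$ through $p_0,q_0$ reproduces exactly the $\partial_{p_0}\varphi^*\rr+\partial_{q_0}\varphi^*\xx$ terms, and that no $u_s$ or $(p,q)$ components enter even though they are now decision variables. Their components of $\delta^j$ are zero, so their normal-cone parts are still covered by polarity.

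\emph{Positivity and signs.} Under either condition of Theorem~\ref{thm:passive-exactness}, the right-hand side is strictly positive, exactly as in that proof: case 1) uses Proposition~\ref{thm:strong-region-M}, and case 2) uses \eqref{eq:gan-strict-weighted-inverse}. So tangency for all $j$ gives $\lambda_\non^*\in\R^n_{++}$, and Theorem~\ref{thm:positive-multiplier-transfer} applies, with uniqueness only when $u_s$ is fixed. For the direction signs:
\begin{itemize}
\item $\delta_u^j=\mathbf K_u^*\mathbf e_j\ge0$ holds because $\mathbf K_u^*\in\R_+^{n\times n}$. Under $\mathcal C$ this is Proposition~\ref{prop:gan-transformed-M}. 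Under $\mathcal F$ it follows from $\hat C^{-\top}\hat{\mathbf Z}\ge0$ and $(\mathbf M_\ell^*)^{-1}\ge0$.
\item $\zeta^\top\delta_\ell^j=-[(\mathbf M_\ell^*)^{-\top}\zeta]_j\le0$ for $\zeta\in\{\rr,\xx\}$. Under $\mathcal C$ this is Proposition~\ref{prop:gan-transformed-M}. Under $\mathcal F$ it follows from $(\mathbf M_\ell^*)^{-1}\ge0$ and $\zeta\ge0$.
\end{itemize}
These give $\delta_{p_0}^j,\delta_{q_0}^j\le0$.
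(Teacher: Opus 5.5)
Your proposal is correct and follows the paper's proof essentially step for step. You identify $\delta^j$ with $\partial_\ell x^\star\,\delta_\ell^j$ to get $\mathrm Dg_\lin\delta^j=0$ and $\mathrm Dg_\non\delta^j=-\mathbf M_\ell^*\delta_\ell^j=\mathbf e_j$, reduce $-\langle\nabla f(z^*),\delta^j\rangle$ to $[(\mathbf M_\ell^*)^{-\top}c_\ell^*]_j$, apply Proposition~\ref{prop:isolating-direction} with $\eta=1$ together with polarity, and read off positivity and the signs of $\delta_u^j$, $\rr^\top\delta_\ell^j$, $\xx^\top\delta_\ell^j$, $\delta_{p_0}^j$, $\delta_{q_0}^j$ from Propositions~\ref{thm:strong-region-M} and~\ref{prop:gan-transformed-M}, exactly as the paper does.
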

\rev{The direction is $\partial_\ell x^\star\,\delta_\ell^j$ completed by $\delta_\ell^j$, hence tangent to the fixed-injection solution map, and~\eqref{eq:isolating-direction} holds with $\eta=1$. Since $f$ does not depend on $(P,Q)$ directly, $-\langle\nabla f(z^*),\delta^j\rangle=[(\mathbf M_\ell^*)^{-\top}c_\ell^*]_j$, and Proposition~\ref{prop:isolating-direction} gives the bound. The sign verifications are in Appendix~\ref{app:proof-passive-multiplier-bound}.}
\begin{rem}[Sufficient objective conditions]
\label{rem:passive-objective-conditions}
Since $\hat C^{-\top}\hat{\mathbf Z}\in\R^{n\times n}_+$, case 1) holds if
\begin{align*}
 \nabla_u\varphi^*&\in\R^n_-,\\
 \nabla_\ell\varphi^*
   +\partial_{p_0}\varphi^*\,\rr+\partial_{q_0}\varphi^*\,\xx&\in\R^n_{++}.
\end{align*}
For case 2), weighted loss and slack costs
$\varphi=a\rr^\top\ell+b\xx^\top\ell+\psi(p_0,q_0)$,
where $a,b\ge0$ and $\psi$ is nondecreasing in both arguments, give
\[
 c_\ell^*=(a+\partial_{p_0}\psi^*)\rr
             +(b+\partial_{q_0}\psi^*)\xx.
\]
Its componentwise positivity suffices. For a slack cost $f_0(p_0)$, the gradient is $c_\ell^*=f_0'(p_0^*)\rr$. It is positive if $f_0'(p_0^*)>0$ and $\rr\in\R^n_{++}$. Unlike a finite-perturbation argument using strict increase~\cite{gan2014exact}, this first-order certificate requires a positive derivative: $t^3$ is convex and strictly increasing on $t\ge0$, yet has zero derivative at $0$. Under $\mathcal C$, the weighted form is essential. Arbitrary positive current-cost vectors are not covered by its weighted inverse signs.
\end{rem}
\rev{The two directions of this section have different footprints. The voltage-preserving direction~\eqref{eq:voltage-preserving-direction} is supported on one branch and leaves every voltage unchanged. The passive direction~\eqref{eq::direction::passive} raises voltages throughout the network and, when it is used inside one region of a larger network, propagates to every region downstream. Composing the two therefore requires the subtree responses of Section~\ref{sec:slack-voltage-monotonicity}.}

\section{Hybrid Exactness by Graph Decomposition}
\label{sec:hybrid-exactness}

\rev{In this section, we compose the local certificates of Section~\ref{sec:exactness-certificates} into a certificate for a partially dispatchable system, by constructing for every branch a test direction that satisfies~\eqref{eq:isolating-direction} on the full network. Compared with the voltage-preserving direction~\eqref{eq:voltage-preserving-direction} of a dispatchable subtree, the passive direction~\eqref{eq::direction::passive} changes voltages and therefore propagates to the descendants of its subtree. Section~\ref{sec:hybrid-partition} partitions the network and fixes the nomenclature, Section~\ref{sec:hybrid-extension} extends a passive direction through the affected descendants, Section~\ref{sec:hybrid-certificate} states the hybrid certificate, and Section~\ref{sec:der-placement} draws conclusions for the placement of controllable DERs.}

\begin{figure*}[tb]
\centering
\begin{tikzpicture}[x=1cm,y=1cm,>=stealth,
  control/.style={circle,draw=black,fill=black,
    inner sep=0pt,minimum size=5pt},
  fixed/.style={circle,draw=black,fill=white,inner sep=0pt,minimum size=5pt},
  passive/.style={draw=black!65,dashed,line width=.8pt},
  dispatch/.style={draw=black,line width=1.1pt},
  lab/.style={font=\footnotesize,align=center}]

\node[lab] at (0,3.0) {(a) General hybrid partition};
\node[control,label={[font=\footnotesize]above:$s_\kappa$}] (a) at (0,2.2) {};
\node[fixed] (ab) at (-.675,1.525) {};
\node[fixed] (ac) at (.675,1.525) {};
\node[control,label={[font=\footnotesize]left:$s_\sigma$}] (b) at (-1.35,.85) {};
\node[control,label={[font=\footnotesize]right:$s_\gamma$}] (c) at (1.35,.85) {};
\node[control] (d) at (-1.35,-.65) {};
\node[fixed] (e) at (1.35,-.65) {};
\draw[passive,->] (a)--(ab);
\draw[passive,->] (ab)--(b);
\draw[passive,->] (a)--(ac);
\draw[passive,->] (ac)--(c);
\draw[dispatch,->] (b)--(d);
\draw[passive,->] (c)--(e);
\node[lab,fill=white,inner sep=2pt] at (0,.9) {intermediate\\passive $\kappa$};
\node[lab,anchor=east] at (-1.55,.1) {dispatchable\\$\sigma$};
\node[lab,anchor=west] at (1.55,.1) {terminal passive\\$\gamma$};
\node[lab] at (0,-1.2) {$\delta u_{s_\kappa}=0$,\quad
  $\delta u_{s_\sigma},\delta u_{s_\gamma}\ge0$};

\begin{scope}[xshift=7.1cm]
\node[lab] at (0,3.0) {(b) Backbone with terminal subtrees};
\node[control,label={[font=\footnotesize]above:$0$}] (s) at (-2.15,1.8) {};
\node[control,label={[font=\footnotesize]above:$s_\kappa$}] (i) at (-.4,1.8) {};
\node[control,label={[font=\footnotesize]above:$s_\sigma$}] (j) at (1.5,1.8) {};
\node[fixed] (i1) at (-.4,.5) {};
\node[fixed] (i2) at (-.4,-.65) {};
\node[fixed] (j1) at (1.5,.5) {};
\node[fixed] (j2) at (1.5,-.65) {};
\draw[dispatch,->] (s)--(i);
\draw[dispatch,->] (i)--(j);
\draw[passive,->] (i)--(i1);
\draw[passive,->] (i1)--(i2);
\draw[passive,->] (j)--(j1);
\draw[passive,->] (j1)--(j2);
\node[lab] at (-.3,2.4) {dispatchable backbone};
\node[lab,anchor=east] at (-.65,.1) {passive\\$\kappa$};
\node[lab,anchor=west] at (1.75,.1) {passive\\$\sigma$};
\node[lab] at (0,-1.2) {$\delta u_{s_\kappa}=\delta u_{s_\sigma}=0$};
\end{scope}
\end{tikzpicture}
\caption{Subtree partitions. Filled buses have controllable injections; open buses have fixed injections. Solid branches are dispatchable and dashed branches passive. (a) An intermediate passive source changes its child-root voltages. Descendant responses and interface injections complete the direction. (b) A dispatchable backbone keeps all attachment voltages fixed, and terminal passive directions remain local.}
\label{fig:hybrid-partitions}
\end{figure*}

\subsection{Partition into Passive and Dispatchable Subtrees}
\label{sec:hybrid-partition}

The following procedure constructs the partition\rev{; see Fig.~\ref{fig:hybrid-partitions}}.
\begin{enumerate}[label=(\roman*)]
\item Label a branch \emph{dispatchable} if both incident buses have controllable injections. Group connected dispatchable branches into dispatchable subtrees.
\item Group the remaining branches into \emph{passive subtrees}. Starting at a controllable bus, follow these branches downstream through buses with fixed injections, stopping at the next controllable bus or a leaf. Include the stopping bus in the subtree. Branches that meet at a fixed-injection bus stay in the same passive subtree.
\item Call a passive subtree \emph{terminal} if no subtree is attached downstream, and \emph{intermediate} otherwise.
\end{enumerate}
\rev{By construction, the root of every subtree and every bus at which a subtree is attached have controllable injections, while fixed-injection buses are interior to passive subtrees.}

Using the subtree indices from Section~\ref{sec::graph}, write the partition as
\begin{equation*}
\mathcal K\subseteq\mathcal{S}(\mathcal G),\quad \mathcal N=\bigsqcup_{\kappa\in\mathcal K}\mathcal N_\kappa,\quad \mathcal L=\bigsqcup_{\kappa\in\mathcal K}\mathcal L_\kappa.
\end{equation*}
Each non-slack bus with the corresponding branch belongs to exactly one subtree.
We write
\[
\mathcal K=\mathcal K_{\mathrm d}\sqcup\mathcal K_{\mathrm p},\qquad \mathcal K_{\mathrm p}=\mathcal K_{\mathrm{ip}}\sqcup\mathcal K_{\mathrm{tp}},
\]
where the subscripts $\mathrm d$, $\mathrm{ip}$, and $\mathrm{tp}$ denote dispatchable, intermediate passive, and terminal passive subtrees, respectively.
We denote $\mathcal K^{\mathrm{des}}(\kappa)$  as  the strict descendants of subtree $\kappa$. Then,
by definition, we have $\mathcal K^{\mathrm{des}}(\kappa)=\emptyset$ for $\kappa\in\mathcal K_{\mathrm{tp}}$.

A subtree $\sigma\in\mathcal K^{\mathrm{des}}(\kappa)$ is a child of $\kappa$ if its slack bus satisfies $s_\sigma\in\mathcal N_\kappa$. Eq.~\eqref{eq:compatible-subtree-injections} ensures consistency between the full-network operating point $z$ and the compatible subtree operating points $z_\kappa$ and $z_\sigma$. Following~\eqref{eq::slack::balance}, define the local slack injections of subtree $\sigma$ by
\begin{subequations}\label{eq::slack::balance::local}
  \begin{align}
    p_0^\sigma&=\sum_{\substack{j:\pi(j)=s_\sigma\\
    j\in\mathcal N_\sigma}}(P_j+\rr_j\ell_j),\\
    q_0^\sigma&=\sum_{\substack{j:\pi(j)=s_\sigma\\j\in\mathcal N_\sigma}}(Q_j+\xx_j\ell_j). \end{align}
  \end{subequations}
    Similar to~\eqref{eq::slack::injections}, these satisfy \rev{the} power balances of the subtree~$\sigma$
    \begin{equation}\label{eq::slack::injections::local} p_0^\sigma+\mathbf1^\top p^\sigma=\rr_\sigma^\top\ell_\sigma,\qquad q_0^\sigma+\mathbf1^\top q^\sigma=\xx_\sigma^\top\ell_\sigma.
    \end{equation}
    For each bus $i\in\mathcal N_\kappa$, the full-network injections satisfy
    \begin{subequations}\label{eq::slack::coupling}
      \begin{align}
        p_i&=p_i^\kappa+\sum_{s_\sigma=i}p_0^\sigma,\\ q_i&=q_i^\kappa+\sum_{s_\sigma=i}q_0^\sigma,
      \end{align}
    \end{subequations}
    where each sum runs over children $\sigma$ of $\kappa$ rooted at $i$. These identities specify how controllable interface injections must adjust to changes in child-subtree input powers \rev{so that the compatible injections of the parent subtree remain unchanged}.

\subsection{Local Directions to the \rev{Full} Network}
\label{sec:hybrid-extension}

To apply Proposition~\ref{prop:isolating-direction} to the \rev{full} network, we extend the local subtree directions~\eqref{eq:voltage-preserving-direction} and~\eqref{eq::direction::passive} to the full network.

Let $z\in\mathcal R_{\mathrm{svs}}$ be feasible for the original problem~\eqref{eq:opf-distflow-split}. For each subtree $\kappa\in\mathcal K$, all local matrices \rev{$\mathbf M_{\ell,\kappa}$, $\mathbf K_{u,\kappa}$, and the regions $\mathcal F_\kappa$, $\mathcal C_\kappa$ defined by~\eqref{eq::F} and~\eqref{eq::C}} are evaluated at the compatible operating point $(z_\kappa,u_{s_\kappa})$ from Definition~\ref{def:compatible-subtree-operating-point}. Lemma~\ref{lem:strong-subtree-inheritance} ensures that $z_\kappa\in\mathcal R_{\mathrm{svs},\kappa}$.

For a dispatchable subtree $\kappa\in\mathcal K_{\mathrm d}$, the direction~\eqref{eq:voltage-preserving-direction} changes only branch $j\in\mathcal N_\kappa$ and its incident injections. All voltages remain fixed, so no descendant response is needed.

For a passive subtree $\kappa\in\mathcal K_{\mathrm p}$, regularity ensures that the local matrix $\mathbf M_{\ell\rev{,\kappa}}$ is nonsingular. For each $j\in\mathcal N_\kappa$, set $\delta_{u_{s_\kappa}}^j=0$ and apply~\eqref{eq::direction::passive} locally to obtain
\begin{equation}\label{eq:hybrid-passive-source}
\delta_{u_\kappa}^j=\mathbf K_{u,\kappa}\mathbf e_{j,\kappa},
\end{equation}
where $\mathbf e_{j,\kappa}\in\R^{n_\kappa}$ is the local canonical basis vector and $\mathbf K_{u,\kappa}$ is the local sensitivity matrix defined as in~\eqref{eq::Ku}. If $z_\kappa\in\mathcal F_\kappa\cup\mathcal C_\kappa$, Propositions~\ref{thm:strong-region-M} and~\ref{prop:gan-transformed-M} imply
\[
\delta_{u_\kappa}^j\in\R_+^{n_\kappa}.
\]
Hence, within \rev{an} intermediate passive subtree, an increase in an interface voltage also raises the root voltage of each child attached there. The local direction must therefore be extended through the affected descendant subtrees. The following lemma provides responses to the increased root voltage that preserve the local DistFlow equations~\eqref{eq:DistFlow-matrix} to first order.
\begin{lemm}[Responses to \rev{a root-voltage change}]\label{lem:hybrid-descendant-response} Let $\sigma\in\mathcal K$, suppose $z_\sigma\in\mathcal R_{\mathrm{svs},\sigma}$, and let $\tau_\sigma:=\delta_{u_{s_\sigma}}\ge0$ denote the root-voltage change. In each of the following cases, there exists a local response direction $\delta_\sigma$ at $z_\sigma$ that preserves the local DistFlow equations~\eqref{eq:DistFlow-matrix} to first order:
  \begin{enumerate}[label=(\roman*)]
    \item If $\sigma\in\mathcal K_{\mathrm p}$ and $z_\sigma\in\mathcal F_\sigma$, hold $(p^\sigma,q^\sigma)$ fixed. Then, \[ \delta_{u_\sigma}-\tau_\sigma\mathbf1\in\R_+^{n_\sigma},\qquad \delta_{\ell_\sigma}\in\R_-^{n_\sigma}. \]
     \item If $\sigma\in\mathcal K_{\mathrm p}$ and $z_\sigma\in\mathcal C_\sigma$, hold $(p^\sigma,q^\sigma)$ fixed. Then, \[ \delta_{u_\sigma}-\tau_\sigma\mathbf1\in\R_+^{n_\sigma},\qquad \rr_\sigma^\top\delta_{\ell_\sigma}\le0,\qquad \xx_\sigma^\top\delta_{\ell_\sigma}\le0. \]
     \item If $\sigma\in\mathcal K_{\mathrm d}$, hold $(P_\sigma,Q_\sigma)$ fixed. Then,
      \[
      \delta_{u_\sigma}-\tau_\sigma\mathbf1\in\R_+^{n_\sigma},
      \quad
      \delta_{\ell_\sigma}\in\R_-^{n_\sigma},
      \quad
      \delta_{p^\sigma},\delta_{q^\sigma}\in\R_-^{n_\sigma}.
      \]
    \end{enumerate} Each response also satisfies $\delta_{p_{0}^{\sigma}},\delta_{q_{0}^{\sigma}}\le0$.
    \end{lemm}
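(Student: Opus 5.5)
The plan is to build each response as $\tau_\sigma$ times the slack-voltage derivative of the appropriate local solution map at $z_\sigma$, and then read off the signs from the monotonicity corollaries of Section~\ref{sec:slack-voltage-monotonicity}, applied to the subtree $\sigma$ with its own local slack bus $s_\sigma$. Lemma~\ref{lem:strong-subtree-inheritance} gives $z_\sigma\in\mathcal R_{\mathrm{svs},\sigma}$. Hence $J_\sigma$, or equivalently $\mathbf M_{\ell,\sigma}$ by~\eqref{eq:subtree-det-reduced}, is nonsingular. The implicit function theorem therefore supplies smooth maps $\ell_\sigma^\star(p^\sigma,q^\sigma;u_{s_\sigma})$ and $u_\sigma^\star$, and, for the dispatchable reduction with $\partial_{x_d}g_\lin$ nonsingular and $\mathbf M_{\ell,\sigma}^r$ nonsingular by Proposition~\ref{prop::nonsingular::Mr}, the maps $(\ell_\sigma^\star,p^{\sigma\star},q^{\sigma\star},u_\sigma^\star)(P_\sigma,Q_\sigma;u_{s_\sigma})$. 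In each case define
\[
\delta_\sigma:=\tau_\sigma\,\partial_{u_{s_\sigma}}(\text{solution map}),
\]
with the complementary variables held fixed. Since $\delta_\sigma$ is a derivative of a solution curve of $g_\sigma=0$, it satisfies the linearized local DistFlow equations, including the slack column $-c_0$ with $\delta u_{s_\sigma}=\tau_\sigma$. This gives first-order preservation. Because $\tau_\sigma\ge0$, all the signs below scale correctly.

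For case (i), apply Corollary~\ref{prop:slack-monotonicity-injections} on $\sigma$. Its hypotheses $\mathcal R_{\mathrm{svs},\sigma}\cap\mathcal F_\sigma$ hold, and it gives $\delta_{\ell_\sigma}\in\R_-^{n_\sigma}$ and $\delta_{u_\sigma}-\tau_\sigma\mathbf1\in\R_+^{n_\sigma}$. For case (ii), Corollary~\ref{col:gan-slack-monotonicity} on $\sigma$ gives the same voltage sign together with $\rr_\sigma^\top\delta_{\ell_\sigma}\le0$ and $\xx_\sigma^\top\delta_{\ell_\sigma}\le0$. For case (iii), Corollary~\ref{prop:slack-monotonicity-receiving} gives the signs of the voltage and current responses. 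The injection signs come from~\eqref{eq::dispatch::pq}, which reads $\delta_{p^\sigma}=-\tau_\sigma\hat C_{f,\sigma}\mathbf R_\sigma(\mathbf M^r_{\ell,\sigma})^{-1}\ell_\sigma\in\R_-^{n_\sigma}$, and likewise for $q$, since every factor is entrywise nonnegative.

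The remaining step is the local slack injections, which I expect to be the only point needing care because they are dependent variables. I will linearize the local balance~\eqref{eq::slack::injections::local}, which gives $\delta_{p_0^\sigma}=\rr_\sigma^\top\delta_{\ell_\sigma}-\mathbf1^\top\delta_{p^\sigma}$ and the analogous expression for $q$. In cases (i) and (ii) we have $\delta_{p^\sigma}=0$. The current signs from (i), or the weighted-loss signs from (ii), then give $\delta_{p_0^\sigma},\delta_{q_0^\sigma}\le0$. In case (iii) the formula mixes a nonpositive term $\rr_\sigma^\top\delta_{\ell_\sigma}$ with a nonnegative term $-\mathbf1^\top\delta_{p^\sigma}$, so it is better to use the definition~\eqref{eq::slack::balance::local} directly. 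With $(P_\sigma,Q_\sigma)$ fixed, the linearization is $\delta_{p_0^\sigma}=\sum_{\pi(j)=s_\sigma}\rr_j\delta_{\ell_j}\le0$, and similarly $\delta_{q_0^\sigma}\le0$, because $\delta_{\ell_\sigma}\in\R_-^{n_\sigma}$. This completes all three cases.
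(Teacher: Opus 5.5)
Your proposal is correct and follows the paper's proof essentially step for step. You take $\tau_\sigma$ times the local slack-voltage sensitivity, read off the signs from the three monotonicity corollaries and \eqref{eq::dispatch::pq}, and obtain the slack-injection signs from the subtree balance \eqref{eq::slack::injections::local} in cases (i)--(ii) but from the definition \eqref{eq::slack::balance::local} in case (iii), which is exactly the switch the paper makes there for the mixed-sign reason you identify.
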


\rev{Each response is $\tau_\sigma$ times the corresponding slack-voltage sensitivity of Section~\ref{sec:slack-voltage-monotonicity}, so the local residual derivatives vanish by construction, and the signs are those of Corollaries~\ref{prop:slack-monotonicity-injections}, \ref{col:gan-slack-monotonicity}, and~\ref{prop:slack-monotonicity-receiving} together with~\eqref{eq::dispatch::pq}. The slack-injection signs follow by differentiating the local balances. The verification is in Appendix~\ref{app:proof-hybrid-lemmas}.}

\begin{revblock}
The completed direction for a passive branch $j\in\mathcal N_\kappa$ is assembled in three steps. First, apply~\eqref{eq::direction::passive} on the source subtree $\kappa$ with $\delta^j_{u_{s_\kappa}}=0$ and $\delta^j_{p^\kappa}=\delta^j_{q^\kappa}=0$, which gives $\delta^j_{\ell_\kappa}=-\mathbf M_{\ell,\kappa}^{-1}\mathbf e_{j,\kappa}$ and~\eqref{eq:hybrid-passive-source}. Second, proceed through the descendants of $\kappa$ in parent-to-child order: for each $\sigma$, set $\tau_\sigma^j$ equal to the voltage change already assigned at $s_\sigma$ by its parent, and take the response of Lemma~\ref{lem:hybrid-descendant-response}; all other subtrees are left unchanged. Third, recover the physical injections by differentiating~\eqref{eq::slack::coupling}: for $i\in\mathcal N_\gamma$,
\begin{subequations}\label{eq::adjustments}
\begin{align}
\delta_{p_i}^j
&=\delta_{p_i^\gamma}^j
+\sum_{s_\sigma=i}\delta_{p_0^\sigma}^j,\\
\delta_{q_i}^j
&=\delta_{q_i^\gamma}^j
+\sum_{s_\sigma=i}\delta_{q_0^\sigma}^j,
\end{align}
\end{subequations}
where the sums run over the children of $\gamma$ rooted at $i$. Only subtree roots and attachment buses, all of which are controllable, receive nonzero adjustments. The following lemma records the properties of this direction.
\end{revblock}
\begin{lemm}[Extension to the \rev{full} network]
\label{lem:hybrid-global-extension}
Let $z\in\mathcal R_{\mathrm{svs}}$ be feasible for~\eqref{eq:opf-distflow-split}, and let $\kappa\in\mathcal K_{\mathrm p}$. Suppose
\[
z_\sigma\in\mathcal F_\sigma\cup\mathcal C_\sigma,
\qquad
\forall\sigma\in\{\kappa\}\cup
\bigl(\mathcal K^{\mathrm{des}}(\kappa)\cap\mathcal K_{\mathrm p}\bigr),
\]
where the certificate regions~\eqref{eq::F} and~\eqref{eq::C} are evaluated at the compatible subtree operating points. Then, for every $j\in\mathcal N_\kappa$, there exists a direction $\delta^j$ that keeps the slack voltage and all fixed nodal injections unchanged and satisfies
\begin{equation}\label{eq:hybrid-global-isolation}
\mathrm Dg_\lin(z)\delta^j=0,
\qquad
\mathrm Dg_\non(z)\delta^j=\mathbf e_j,
\end{equation}
with
\[
\begin{gathered}
\delta_u^j\in\R_+^n,
\qquad
\delta_p^j,\delta_q^j\in\R_-^n,\\
\delta_{p_0}^j,\delta_{q_0}^j\le0,
\qquad
\rr^\top\delta_\ell^j,\xx^\top\delta_\ell^j\le0,
\end{gathered}
\]
where $\delta_{p_0}^j$ and $\delta_{q_0}^j$ are the induced changes in the physical slack injections. If $\rr_\kappa\in\R_{++}^{n_\kappa}$, then
\begin{equation}\label{eq:hybrid-strict-source-loss}
\delta_{p_{s_\kappa}}^j
=\rr_\kappa^\top\delta_{\ell_\kappa}^j<0,
\quad\mathrm{and}\quad
\rr^\top\delta_\ell^j<0.
\end{equation}
\end{lemm}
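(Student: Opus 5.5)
The plan is to verify the three-step construction preceding the lemma and to propagate signs by induction over subtrees in parent-to-child order.

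\emph{Source subtree.} On $\kappa$, take the local direction~\eqref{eq::direction::passive} with $\delta^j_{u_{s_\kappa}}=0$ and $\delta_{p^\kappa}=\delta_{q^\kappa}=0$. Corollary~\ref{col:passive-multiplier-bound}, applied to the compatible point $z_\kappa\in\mathcal R_{\mathrm{svs},\kappa}$ (Lemma~\ref{lem:strong-subtree-inheritance}), gives the following: the local affine residuals vanish; the local nonlinear residual changes by $\mathbf e_{j,\kappa}$; $\delta_{u_\kappa}^j\ge0$ via $\mathbf K_{u,\kappa}\ge0$ (Propositions~\ref{thm:strong-region-M} and~\ref{prop:gan-transformed-M}, since $\hat C^{-\top}\hat{\mathbf Z}\ge0$); and $\rr_\kappa^\top\delta_{\ell_\kappa}^j,\xx_\kappa^\top\delta_{\ell_\kappa}^j\le0$. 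Under $\mathcal F_\kappa$ this follows from $\mathbf M_{\ell,\kappa}^{-1}\ge0$. Under $\mathcal C_\kappa$ it follows from $\mathbf M_{\ell,\kappa}^{-\top}\zeta\ge0$. Differentiating the local balance~\eqref{eq::slack::injections::local} with fixed $(p^\kappa,q^\kappa)$ gives $\delta_{p_0^\kappa}=\rr_\kappa^\top\delta_{\ell_\kappa}^j\le0$, and likewise for $q$.

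\emph{Descendants.} For each descendant $\sigma$, by induction, $\tau_\sigma^j$ equals a voltage entry of its parent's direction and is therefore $\ge0$. Lemma~\ref{lem:hybrid-descendant-response} supplies a response whose voltages are all $\ge\tau_\sigma\ge0$, which keeps the induction going. It also gives $\delta_{p_0^\sigma},\delta_{q_0^\sigma}\le0$, weighted losses $\le0$ in the passive cases, and $\delta_\ell\le0$ (hence weighted losses $\le0$) in the dispatchable case. All remaining subtrees, including the ancestors of $\kappa$, get the zero direction.

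\emph{Assembling the global equations.} This is the step I expect to require the most care. Because the subtrees partition the branches, the global residuals~\eqref{eq:DistFlow-matrix} split as follows. The voltage and nonlinear rows for branch $k\in\mathcal N_\sigma$ coincide with the local rows once $u_{s_\sigma}$ is identified with the parent's $u$; this identification is consistent because $\tau_\sigma$ was chosen to match. The power-balance row at bus $i\in\mathcal N_\gamma$ equals the local row plus the child root flows, and these are exactly absorbed by~\eqref{eq::adjustments}, the derivative of~\eqref{eq::slack::coupling}. One must also check the upstream interface: since $\delta u_{s_\kappa}=0$, the parent of $\kappa$ sees only the change $\delta_{p_0^\kappa}$ at bus $s_\kappa$. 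This change is absorbed by $\delta_{p_{s_\kappa}}$, which is legitimate because $s_\kappa$ is controllable, or is the slack bus, in which case it enters $\delta_{p_0}$. Thus $\mathrm Dg_\lin\delta^j=0$, and $\mathrm Dg_\non\delta^j=\mathbf e_j$ because only the source row is perturbed. Fixed injections sit at interior buses of passive subtrees, where both $\delta_{p^\sigma}=0$ and no child is attached, so they do not move.

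\emph{Signs.} $\delta_p^j\le0$ holds because every adjustment in~\eqref{eq::adjustments} is a sum of nonpositive terms: dispatchable $\delta_{p^\sigma}\le0$ and $\delta_{p_0^\sigma}\le0$. Total losses are $\rr^\top\delta_\ell=\sum_\sigma\rr_\sigma^\top\delta_{\ell_\sigma}\le0$. By~\eqref{eq::slack::injections}, $\delta_{p_0}=\rr^\top\delta_\ell-\mathbf1^\top\delta_p$. A sign for this requires care. Summing the adjustments telescopes, giving $\mathbf1^\top\delta_p=\rr_\kappa^\top\delta_{\ell_\kappa}+\sum_{\sigma}(\text{local }\mathbf1^\top\delta_{p^\sigma})$. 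Then the local balances give $\delta_{p_0}=0$ when $s_\kappa\neq0$, and $\delta_{p_0}=\rr_\kappa^\top\delta_{\ell_\kappa}\le0$ when $s_\kappa=0$; in either case $\delta_{p_0}\le0$. The reactive case is identical.

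\emph{Strict inequality.} For~\eqref{eq:hybrid-strict-source-loss}, take the $\mathcal F_\kappa$ case first: $\delta_{\ell_\kappa}^j=-\mathbf M_{\ell,\kappa}^{-1}\mathbf e_j$ has a strictly negative $j$-th entry (positive inverse diagonal), so with $\rr_\kappa>0$ the sum is $<0$. In the $\mathcal C_\kappa$ case, use~\eqref{eq:gan-strict-weighted-inverse} with $\xi=\rr_\kappa$, giving $\rr_\kappa^\top\mathbf M_{\ell,\kappa}^{-1}\mathbf e_j>0$. Adding the nonpositive descendant losses yields $\rr^\top\delta_\ell^j<0$.
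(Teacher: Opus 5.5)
Your proposal is correct and follows the paper's proof essentially step for step: you use the passive source direction on $\kappa$ with $\delta_{u_{s_\kappa}}^j=0$, descendant responses from Lemma~\ref{lem:hybrid-descendant-response} in parent-to-child order with $\tau_\sigma^j\ge0$ by induction, the interface adjustments~\eqref{eq::adjustments}, and strictness from the positive inverse diagonal under $\mathcal F_\kappa$ or from~\eqref{eq:gan-strict-weighted-inverse} under $\mathcal C_\kappa$. The one deviation is minor: you derive $\delta_{p_0}^j\le0$ from the global balance~\eqref{eq::slack::injections}, whereas the paper leaves it implicit in the nonpositive adjustments (the slack change equals $\delta_{p_0^\kappa}^j$ if $s_\kappa=0$ and is zero otherwise); your telescoped sum is misstated and should read $\mathbf 1^\top\delta_p^j=\delta_{p_0^\kappa}^j+\sum_{\sigma\in\mathcal K^{\mathrm{des}}(\kappa)}\bigl(\mathbf 1^\top\delta_{p^\sigma}^j+\delta_{p_0^\sigma}^j\bigr)=\rr^\top\delta_\ell^j$ for $s_\kappa\neq0$, which does give your stated conclusion $\delta_{p_0}^j=0$.
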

\rev{The direction preserves all affine equations because shared root voltages are matched and~\eqref{eq::adjustments} restores the nodal balances. Only the source contributes a nonlinear residual, $\mathbf e_{j,\kappa}$. The signs follow from Lemma~\ref{lem:hybrid-descendant-response} by induction along the descendants, and the strict inequality~\eqref{eq:hybrid-strict-source-loss} from the positive weighted inverse of $\mathbf M_{\ell,\kappa}$ under $\mathcal F_\kappa$ or $\mathcal C_\kappa$. The verification is in Appendix~\ref{app:proof-hybrid-lemmas}.}

\subsection{Hybrid exactness}
\label{sec:hybrid-certificate}

With Proposition~\ref{prop:isolating-direction}, we now combine the dispatchable and passive directions \rev{into} an exactness certificate for the full network. \rev{The tangent condition ensures compatibility with the operating limits, while the objective conditions ensure a strict decrease along each completed direction.}

\begin{thm}[Hybrid exactness]
\label{thm:hybrid-exactness}
Consider a primal-dual KKT point $(z^*,\lambda_\lin^*,\lambda_\non^*)$ of the original problem~\eqref{eq:opf-distflow-split}, with $z^*\in\mathcal R_{\mathrm{svs}}$ and the subtree partition $\mathcal K$ from Section~\ref{sec:hybrid-partition}. Assume $\rr\in\R^n_{++}$, and let every passive subtree satisfy
\[
z_\kappa^*\in\mathcal F_\kappa\cup\mathcal C_\kappa,
\qquad \forall\kappa\in\mathcal K_{\mathrm p}.
\]
Let $\delta^j$ be the voltage-preserving direction~\eqref{eq:voltage-preserving-direction} on a dispatchable branch, or the completed direction of Lemma~\ref{lem:hybrid-global-extension} on a passive branch. Suppose
\begin{equation}\label{eq:hybrid-tangent-condition}
\delta^j\in T_{\mathcal Z}(z^*),\qquad \forall j\in\mathcal N.
\end{equation}
For the objective $f(z)=\varphi(u,\ell,p,q;p_0(z),q_0(z))$ under Assumption~\ref{ass:convex-exactness}, evaluate all derivatives below at $z^*$ and assume
\begin{equation}\label{eq:hybrid-objective-condition}
\hspace{-6pt}\nabla_u\varphi^*\hspace{-4pt}\in\hspace{-2pt}\R_-^n,\qquad
\nabla_{(p_0,p)}\varphi^*,\nabla_{(q_0,q)}\varphi^*
\hspace{-2pt}\in\hspace{-4pt}\R_+^{n+1}.
\end{equation}
and $$\nabla_\ell\varphi^*=\alpha\rr,\qquad\mathrm{with}\;\alpha\ge0.$$ If either of the following conditions holds:
\begin{enumerate}[label=(\roman*)]
  \item $\alpha>0$, or
  \item  $\partial_{p_i}\varphi^*>0$ at every controllable bus $i$, including the slack bus,
\end{enumerate}
then $\lambda_\non^*\in\R_{++}^n$, and the conclusions of Theorem~\ref{thm:positive-multiplier-transfer} hold\rev{, with uniqueness when the slack voltage is fixed}.
\end{thm}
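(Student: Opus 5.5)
The plan is to verify, for every branch $j\in\mathcal N$, the hypotheses~\eqref{eq:isolating-objective-condition} of Proposition~\ref{prop:isolating-direction} with the prescribed direction $\delta^j$. Then $[\lambda_\non^*]_j>0$ for all $j$, and Theorem~\ref{thm:positive-multiplier-transfer} gives the conclusions. Two of the three conditions are handled by earlier results. First, the isolation property~\eqref{eq:isolating-direction} with $\eta>0$ holds on a dispatchable branch by Lemma~\ref{prop:voltage-preserving-direction}, where $\eta_j>0$ uses $z^*\in\mathcal R_{\mathrm{svs}}$. On a passive branch $j\in\mathcal N_\kappa$ it holds by Lemma~\ref{lem:hybrid-global-extension} with $\eta=1$; that lemma applies because every passive subtree, and in particular $\kappa$ and all its passive descendants, lies in $\mathcal F_\sigma\cup\mathcal C_\sigma$. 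Second, tangency is assumed in~\eqref{eq:hybrid-tangent-condition}. What remains is the strict descent $\langle\nabla f(z^*),\delta^j\rangle<0$.

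For this I would use the chain rule for $f=\varphi(u,\ell,p,q;p_0,q_0)$, with $p_0,q_0$ the dependent slack injections from~\eqref{eq::slack::injections}:
\[
 \langle\nabla f,\delta^j\rangle=\nabla_u\varphi^{*\top}\delta_u^j+\alpha\,\rr^\top\delta_\ell^j
 +\nabla_{(p_0,p)}\varphi^{*\top}(\delta_{p_0}^j,\delta_p^j)+\nabla_{(q_0,q)}\varphi^{*\top}(\delta_{q_0}^j,\delta_q^j).
\]
On a \emph{dispatchable} branch, $\delta_u^j=0$ and $\delta_\ell^j=-\mathbf e_j$, so the second term is $-\alpha\rr_j$. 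The injection parts are $-\tfrac{\rr_j}{2}$ times $\partial_{p_i}\varphi^*+\partial_{p_j}\varphi^*$, and the analogous reactive terms, where the slack argument $p_0$ is used when $i=0$. All of these are $\le0$ by~\eqref{eq:hybrid-objective-condition}. This reproduces $-d_j^*$ from~\eqref{eq:fully-controllable-monotone-objective}. Strictness then comes from $\alpha\rr_j>0$ in case (i). In case (ii) it comes from $\rr_j\partial_{p_j}\varphi^*>0$, since both endpoints of a dispatchable branch are controllable and $\rr\in\R^n_{++}$.

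On a \emph{passive} branch, Lemma~\ref{lem:hybrid-global-extension} supplies the signs $\delta_u^j\ge0$, $\delta_p^j,\delta_q^j\le0$, $\delta_{p_0}^j,\delta_{q_0}^j\le0$, and $\rr^\top\delta_\ell^j\le0$. Combined with~\eqref{eq:hybrid-objective-condition}, every term in the expansion is then nonpositive. For strictness I would invoke~\eqref{eq:hybrid-strict-source-loss}, which applies because $\rr\in\R^n_{++}$. In case (i) the loss term is then $\alpha\rr^\top\delta_\ell^j<0$. In case (ii), the source root $s_\kappa$ is controllable by the partition rule and $\delta_{p_{s_\kappa}}^j=\rr_\kappa^\top\delta_{\ell_\kappa}^j<0$, so the term $\partial_{p_{s_\kappa}}\varphi^*\,\delta^j_{p_{s_\kappa}}$ is strictly negative. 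When $s_\kappa=0$, this change is instead the physical slack change, and the argument uses $\partial_{p_0}\varphi^*>0$.

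The main obstacle is the bookkeeping in case (ii). One must confirm that the change at $s_\kappa$ in~\eqref{eq:hybrid-strict-source-loss} is the \emph{total} physical injection change at that bus. This requires that no positive adjustment from the descendant-response step~\eqref{eq::adjustments} cancels it there, which holds because those adjustments are all nonpositive by Lemma~\ref{lem:hybrid-descendant-response}. One must also confirm that when $s_\kappa=0$ the slack injection $p_0$ rather than a nodal $p$ carries the change. I would settle both by reading the signs off the construction of Lemma~\ref{lem:hybrid-global-extension}, with every contribution at $s_\kappa$ nonpositive, and by using $p_0=\rr^\top\ell-\mathbf1^\top p$ for the slack case. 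Once all $[\lambda_\non^*]_j>0$, Theorem~\ref{thm:positive-multiplier-transfer} gives global optimality and exactness, and uniqueness when the slack voltage is fixed.
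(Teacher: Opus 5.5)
Your proposal is correct and follows essentially the same route as the paper. You isolate each multiplier via Proposition~\ref{prop:isolating-direction}, using the voltage-preserving direction on dispatchable branches and the completed direction of Lemma~\ref{lem:hybrid-global-extension} on passive branches. All terms of $\langle\nabla f(z^*),\delta^j\rangle$ are nonpositive, and strictness comes from $\alpha\rr_j>0$ or $\partial_{p_j}\varphi^*>0$ on dispatchable branches and from \eqref{eq:hybrid-strict-source-loss} on passive ones. Your extra bookkeeping at $s_\kappa$, including the case $s_\kappa=0$ where the physical slack injection carries the change, is consistent with the construction and makes explicit what the paper's proof leaves implicit.
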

\begin{proof}
For a dispatchable branch $j$, Lemma~\ref{prop:voltage-preserving-direction} gives~\eqref{eq:isolating-direction} with $\eta_j>0$ when $z^*\in\mathcal R_{\mathrm{svs}}$. Under either condition (i) or (ii), the objective decrease $d_j^*$ in~\eqref{eq:fully-controllable-monotone-objective} is strictly positive\rev{: in (i), $\partial_{\ell_j}\varphi^*=\alpha\rr_j>0$; in (ii), both incident buses are controllable and $\rr_j>0$.}

For a passive branch $j\in\mathcal N_\kappa$, Lemma~\ref{lem:hybrid-global-extension} gives~\eqref{eq:isolating-direction} with $\eta_j=1$ and
\[
\delta_u^j\ge0,\qquad
\delta_p^j,\delta_q^j\le0,\qquad
\delta_{p_0}^j,\delta_{q_0}^j\le0.
\]
By~\eqref{eq:hybrid-objective-condition}, the voltage and injection terms in $\langle\nabla f(z^*),\delta^j\rangle$ are therefore nonpositive.
Under (i), the current term is strictly negative by~\eqref{eq:hybrid-strict-source-loss}:
\[
\langle\nabla_\ell\varphi^*,\delta_\ell^j\rangle
=\alpha\rr^\top\delta_\ell^j<0.
\]
Under (ii), the current term \rev{is} nonpositive. Moreover,~\eqref{eq:hybrid-strict-source-loss} gives $\delta_{p_{s_\kappa}}^j<0$. Since $s_\kappa$ is controllable, its active-injection term is strictly negative.

Thus $\langle\nabla f(z^*),\delta^j\rangle<0$ for every branch. Together with~\eqref{eq:hybrid-tangent-condition}, Proposition~\ref{prop:isolating-direction} yields $\lambda_\non^*\in\R_{++}^n$. The conclusions follow from Theorem~\ref{thm:positive-multiplier-transfer}.
\end{proof}
\rev{The local sign tests $\mathcal F_\kappa$ and $\mathcal C_\kappa$ are therefore part of the certificate rather than sufficient conditions on their own: the interface adjustments, the tangent condition~\eqref{eq:hybrid-tangent-condition}, and the objective conditions are checked on the completed full-network directions. For the box~\eqref{eq:operating-set}, the tangent condition on a passive direction requires headroom below the upper voltage bound wherever $\delta_u^j>0$ and above the lower injection bound wherever $\delta_p^j<0$ or $\delta_q^j<0$. On a dispatchable direction it reduces to Remark~\ref{rem:fully-controllable-bounds}.}

\subsection{Placement of Controllable DERs}
\label{sec:der-placement}

\rev{Theorem~\ref{thm:hybrid-exactness} also indicates where controllable DERs are most useful.} The placement of controllable DERs can be used to manage reverse power flows. \rev{By step (i) of the partition rule, a branch whose two end buses both have controllable injections is dispatchable. Its test direction is the voltage-preserving direction~\eqref{eq:voltage-preserving-direction}, which involves no sign condition, so the reverse flow on that branch enters no test $\mathcal F_\kappa$ or $\mathcal C_\kappa$. The branch only needs the injection headroom of Remark~\ref{rem:fully-controllable-bounds}.} One possible design is \rev{therefore} to place controllable DERs along a backbone so that branches expected to carry reverse flows are dispatchable, while the attached passive subtrees operate without reverse flows.

A particularly simple structure is a dispatchable backbone connected to the slack bus, with terminal passive subtrees attached to it, as illustrated in Fig.~\ref{fig:hybrid-partitions}(b). For a branch in a passive subtree, the local direction changes no child-root voltages, so it can be completed by adjusting only the injection at the subtree root. The sign conditions $\mathcal F_\kappa$ or $\mathcal C_\kappa$ can therefore be checked separately on each passive subtree.

\begin{col}[Dispatchable backbone with terminal passive subtrees]
\label{col:terminal-passive-exactness}
Suppose the dispatchable branches form a backbone connected to the slack bus and every passive subtree is terminal. Under the assumptions of Theorem~\ref{thm:hybrid-exactness}, the conclusions of that theorem hold\rev{, and for a passive branch $j\in\mathcal N_\kappa$ the direction $\delta^j$ changes branch variables only within $\kappa$ and injections only at its root $s_\kappa$}.
\end{col}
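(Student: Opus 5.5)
The corollary is a specialization of Theorem~\ref{thm:hybrid-exactness}. The plan is to check that the partition has the claimed structure, rerun the construction of Lemma~\ref{lem:hybrid-global-extension} in this special case, and then invoke the theorem.

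\textbf{Structure of the partition.} Since every passive subtree is terminal, we have $\mathcal K_{\mathrm{ip}}=\emptyset$ and $\mathcal K^{\mathrm{des}}(\kappa)=\emptyset$ for every $\kappa\in\mathcal K_{\mathrm p}$. Hence the hypothesis of Lemma~\ref{lem:hybrid-global-extension} reduces to $z^*_\kappa\in\mathcal F_\kappa\cup\mathcal C_\kappa$ for the source subtree alone. This is exactly what Theorem~\ref{thm:hybrid-exactness} assumes, so the completed directions exist for every passive branch.

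\textbf{Footprint of a passive direction.} Fix $j\in\mathcal N_\kappa$ and follow the three-step assembly preceding Lemma~\ref{lem:hybrid-global-extension}.
\begin{enumerate}[label=(\roman*)]
\item Step one sets $\delta^j_{u_{s_\kappa}}=0$ and $\delta^j_{p^\kappa}=\delta^j_{q^\kappa}=0$. It produces $\delta^j_{\ell_\kappa}=-\mathbf M_{\ell,\kappa}^{-1}\mathbf e_{j,\kappa}$, together with the induced $(P_\kappa,Q_\kappa,u_\kappa)$ from~\eqref{eq::direction::passive}, all supported on $\kappa$.
\item Step two has nothing to traverse, because $\kappa$ has no descendants. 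No other subtree is touched, in particular no subtree on the backbone.
\item Step three uses~\eqref{eq::adjustments}. The compatible injections $p^\gamma$ of every subtree $\gamma$ are unchanged, and the only nonzero child slack-injection change is $\delta_{p_0^\kappa}^j$, entering at $i=s_\kappa$ (similarly for $q$). So the physical injections change only at $s_\kappa$, with $\delta^j_{p_{s_\kappa}}=\delta^j_{p_0^\kappa}=\rr_\kappa^\top\delta^j_{\ell_\kappa}$ by differentiating the local balance~\eqref{eq::slack::injections::local} at fixed $p^\kappa$.
\end{enumerate}
This establishes the locality claim. I expect the main care to go into this step: one must confirm that $s_\kappa$ is not the global slack, or, if it is, that the adjustment is absorbed by $p_0$. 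One must also confirm that the parent subtree containing $s_\kappa$ (a dispatchable backbone subtree, or the slack) keeps its compatible injections and voltages fixed. Since $\delta u_{s_\kappa}=0$ and $p^\gamma$ is unchanged, no backbone branch variable moves, and~\eqref{eq:hybrid-global-isolation} holds.

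\textbf{Conclusion.} The directions coincide with those of Lemma~\ref{lem:hybrid-global-extension}, and dispatchable branches use~\eqref{eq:voltage-preserving-direction}. The sign properties, and the strict inequality~\eqref{eq:hybrid-strict-source-loss} under $\rr\in\R^n_{++}$, therefore hold verbatim. Applying Theorem~\ref{thm:hybrid-exactness} with its tangent and objective conditions yields $\lambda^*_\non\in\R^n_{++}$ and the conclusions of Theorem~\ref{thm:positive-multiplier-transfer}.
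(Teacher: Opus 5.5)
Your proposal is correct and follows the paper's proof. Terminality gives $\mathcal K^{\mathrm{des}}(\kappa)=\varnothing$, so the construction of Lemma~\ref{lem:hybrid-global-extension} needs no descendant responses, \eqref{eq::adjustments} collapses to the root adjustment $\delta^j_{p_{s_\kappa}}=\delta^j_{p_0^\kappa}$, and Theorem~\ref{thm:hybrid-exactness} applies. The case $s_\kappa=0$ that you flag is harmless: the slack injection is the dependent variable in~\eqref{eq::slack::injections}, so it absorbs $\rr_\kappa^\top\delta^j_{\ell_\kappa}$ automatically.
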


\begin{proof}
For every passive subtree $\kappa$, terminality gives $\mathcal K^{\mathrm{des}}(\kappa)=\varnothing$. Thus, the construction in Lemma~\ref{lem:hybrid-global-extension} requires no descendant responses\rev{, and~\eqref{eq::adjustments} reduces to the root adjustment $\delta_{p_{s_\kappa}}^j=\delta_{p_0^\kappa}^j$}. The conclusion follows from Theorem~\ref{thm:hybrid-exactness}.
\end{proof}

\section{Conclusion}\label{sec:conclusion}

This paper developed a compositional exactness certificate for conic
relaxations of the DistFlow model in partially dispatchable radial networks.
The subtree voltage stability region of
Definition~\ref{def:strong-voltage-regularity} ensures power flow regularity
for the network and all compatible subtrees. By
Proposition~\ref{thm:wang-strong-regularity} and
Corollary~\ref{col:continuation-strong-region}, it contains the regions
certified by existing sufficient conditions for power flow
regularity~\cite{aolaritei2018hierarchical,wang2016necessary,cui2018new}.
Within this region, two endpoint sign conditions give complementary monotone
responses to root voltages. Corollary~\ref{prop:slack-monotonicity-injections}
shows that the limited-reverse-flow condition $\mathcal F$ signs every branch
current, while Corollary~\ref{col:gan-slack-monotonicity} shows that
$\mathcal C$, which the classical path condition implies by
Lemma~\ref{lemma:gan-C1-ancestor-sums}, signs voltages and weighted total
losses. A third response, with receiving-end powers fixed, requires regularity
only and serves dispatchable subtrees, as stated in
Corollary~\ref{prop:slack-monotonicity-receiving}.

Theorem~\ref{thm:positive-multiplier-transfer} and
Proposition~\ref{prop:isolating-direction} recast the classical exactness
arguments as sign tests on individual branch multipliers, which turns
exactness into a branchwise property that can be certified region by region.
Dispatchable subtrees use the voltage-preserving directions of
Lemma~\ref{prop:voltage-preserving-direction}, and passive subtrees use
fixed-injection directions, which Lemma~\ref{lem:hybrid-global-extension}
extends through their descendants and completes with controllable interface
injections. Theorem~\ref{thm:hybrid-exactness} shows that, together with
tangency to the operating set and objective monotonicity, these directions
yield strictly positive multipliers, global optimality, and exactness of every
relaxed optimizer, with uniqueness when the slack voltage is fixed. The
construction also informs DER placement. Under the partition rule of
Section~\ref{sec:hybrid-partition}, controllable injections at both ends of a
reverse-flow branch exempt it from the passive sign conditions, and
Corollary~\ref{col:terminal-passive-exactness} shows that a dispatchable
backbone with terminal passive subtrees keeps every test direction local.

The certificates combine a priori and a posteriori checks. The sign condition
$\mathcal C$ follows from the classical path condition evaluated on injection
and voltage bounds, including its sufficient forms with constant or monotone
resistance-to-reactance ratios (Lemma~\ref{lemma:gan-C1-ancestor-sums} and
Remark~\ref{rem::sufficient::C}). It can therefore be verified before the OPF
is solved. Regularity, tangency, and the objective conditions, by contrast, are
evaluated at a KKT point of the original problem, and failure of any test is
inconclusive. Several questions remain open. First, a priori conditions for
these remaining checks would turn the certificate from a check into a design
tool. Second, the tree path structure underlying both sign conditions does not
carry over to meshed grids, where the branch-flow relaxation must also account
for angle constraints around cycles. Third, conic relaxations can be inexact in
practice, for example in the calculation of operating
envelopes~\cite{moring2026exploring}. When a regional test fails, it remains
open whether the decomposition can localize the source of inexactness and bound
its effect. Finally, extensions to unbalanced networks, shunt elements, and
tap-changing devices remain to be developed.

\begin{appendices}
\section{Proofs}
\label{app:proofs}
\rev{This appendix collects the verifications whose only use in the main text is the statement proved: the comparison with the path products of~\cite{gan2014exact}, the phasor argument behind Proposition~\ref{thm:wang-strong-regularity}, two strictness arguments, and the sign bookkeeping of Sections~\ref{sec:passive-fixed-injection} and~\ref{sec:hybrid-extension}.}

\subsection{Proof of Lemma~\ref{lemma:gan-C1-ancestor-sums}}
\label{app:proof-gan-C1-ancestor-sums}
\begin{proof}
Use the C1 bounds $p\le\overline p$, $q\le\overline q$, and $u\ge\underline u$, with $\underline u\in\R^n_{++}$. Define the two-vectors
\[
 \xi_j:=\begin{bmatrix}\rr_j\\\xx_j\end{bmatrix},\quad
 w_j:=\begin{bmatrix}P_j\\Q_j\end{bmatrix},\quad
 \overline w_j:=\begin{bmatrix}
 ([-\hat C^{-1}\overline p]_j)^+\\
 ([-\hat C^{-1}\overline q]_j)^+
 \end{bmatrix},
\]
where $a^+:=\max\{a,0\}$. The bound-based and operating-point path matrices are
\begin{align*}
 \underline A_j&:=\mathbf I_2-2\xi_j\overline w_j^\top/\underline u_j,\\
 A_j&:=\mathbf I_2+2\xi_jw_j^\top/u_j.
\end{align*}
For $i\in\mathcal N_j^{\anc+}$, let
$\mathcal N^{\seg}_{ij}:=\mathcal N_j^{\anc}\setminus\mathcal N_i^{\anc}$.
Products are ordered upstream to downstream, with the empty product equal to $\mathbf I_2$. C1~\cite[Thm.~1]{gan2014exact} is equivalently
\begin{equation*}
 \left(\prod_{k\in\mathcal N^{\seg}_{ij}}\underline A_k\right)\xi_j\in\R^2_{++},
 \qquad i\in\mathcal N_j^{\anc+},
\end{equation*}
because every ancestor segment lies on a root-to-leaf path.

The affine balances give $P=\hat C^{-1}p+\Gamma\mathbf R\ell$ and the analogous identity for $Q$. Since $\hat C^{-1}\in\R^{n\times n}_{-}$, $\Gamma\in\R^{n\times n}_{+}$, and $\ell\in\R^n_{+}$, they imply $w_j\ge-\overline w_j$. Consequently,
\begin{align*}
 \zeta_j&:=2(w_j/u_j+\overline w_j/\underline u_j)\in\R^2_{+},\\
 A_j&=\underline A_j+\xi_j\zeta_j^\top.
\end{align*}
The path-product comparison in~\cite[App.~B, Lem.~3]{gan2014exact} now yields
\begin{equation*}
 \left(\prod_{k\in\mathcal N^{\seg}_{ij}}A_k\right)\xi_j\in\R^2_{++}.
\end{equation*}
\rev{Indeed,} expand the product using $A_k=\underline A_k+\xi_k\zeta_k^\top$. The term with no rank-one factor is strictly positive by C1. Every other term is a positive C1 segment vector multiplied by factors of the form $\zeta_a^\top(\prod\underline A_k)\xi_b\ge0$, where $b$ is the next selected downstream index, or $j$ after the last selected index. The product follows the segment strictly between $a$ and $b$. Thus all remaining terms are nonnegative.

It remains to identify these products with the entries in~\eqref{eq:T}. Fix $j$ and put $b=\hat{\mathbf U}^{-1}\mathbf e_j$. By~\eqref{eq::hatU}, its rows satisfy
\begin{equation}\label{eq:gan-inverse-column-recursion}
 u_i b_i=\delta_{ij}
 +2w_i^\top\sum_{k:\,i\in\mathcal N_k^{\anc}}b_k\xi_k.
\end{equation}
Backward substitution gives
\begin{equation}\label{eq:gan-inverse-column-support}
 b_i=0\quad(i\notin\mathcal N_j^{\anc+}),\qquad b_j=1/u_j\rev{,}
\end{equation}
\rev{since} a bus outside the ancestor path of $j$ has no descendant on that path, so its row has zero right-hand side after its descendants have been eliminated.

For the path $i=k_0\to\cdots\to k_m=j$, define
$v_t:=\sum_{s=t}^{m}\xi_{k_s}b_{k_s}$. Equation~\eqref{eq:gan-inverse-column-recursion} gives
\begin{align*}
 v_m&=\xi_j/u_j,\\
 v_t&=v_{t+1}+2\xi_{k_t}w_{k_t}^\top v_{t+1}/u_{k_t}
     =A_{k_t}v_{t+1}.
\end{align*}
Combining this recursion with the path entries of $-\hat C^{-1}$ yields
\begin{equation*}
 \begin{aligned}
 \begin{bmatrix}[\mathbf T_r]_{ij}\\{}[\mathbf T_x]_{ij}\end{bmatrix}
 &=\sum_{k\in\mathcal N_j^{\anc+}\setminus\mathcal N_i^{\anc}}
       \xi_k[\hat{\mathbf U}^{-1}]_{kj}\\
 &=\frac1{u_j}\left(\prod_{k\in\mathcal N^{\seg}_{ij}}A_k\right)\xi_j\in\R^2_{++}.
 \end{aligned}
\end{equation*}
If $i\notin\mathcal N_j^{\anc+}$, the support condition~\eqref{eq:gan-inverse-column-support} instead makes both entries zero. Therefore $\mathbf T_r,\mathbf T_x\in\R^{n\times n}_{+}$, proving $z\in\mathcal C$.
\end{proof}
\subsection{Proof of Proposition~\ref{thm:wang-strong-regularity}}
\label{app:wang-strong-regularity}
\begin{revblock}
\begin{proof}[Proof sketch]
\emph{Step 1: phasor coordinates.} Fix $V_0:=\sqrt{u_s}$ and write $S_j:=P_j+\mathbf jQ_j$. At a feasible point with $u\in\R^n_{++}$, the recursion
\[
 V_j:=V_{\pi(j)}/a_j,\qquad a_j:=1+\zz_j\overline{S_j}/u_j,
\]
is well defined, because~\eqref{eq::distflow} gives $|a_j|^2=u_{\pi(j)}/u_j>0$. It yields $|V_j|^2=u_j$, branch currents $I_j:=(V_{\pi(j)}-V_j)/\zz_j$ with $V_j\overline{I_j}=S_j$ and $|I_j|^2=\ell_j$, and nodal injection currents $[I_{\mathrm b}]_j:=\overline{s_j/V_j}$ with $|[I_{\mathrm b}]_j|=\iota_j$ and $V=V_0\mathbf1+\Zb I_{\mathrm b}$. Conversely, phasors determine $(P,Q,u,\ell)$ smoothly. The branch equations~\eqref{eq:DistFlow-matrix:U} and~\eqref{eq:DistFlow-matrix:nonlinear} therefore form a $2n$-dimensional manifold parametrized by $V\in\C^n$: their derivative has rank $2n$, since the columns $(P_j,Q_j,\ell_j)$ occur only in the two rows of branch $j$, with entries $(2\rr_j,2\xx_j,|\zz_j|^2)$ and $(2P_j,2Q_j,-u_j)$, and these are proportional only if $S_j=-u_j\zz_j/|\zz_j|^2$, which forces $u_{\pi(j)}=0$. The balances~\eqref{eq:DistFlow-matrix:P}--\eqref{eq:DistFlow-matrix:Q} coincide with the nodal injection map $s(V)=V\circ\overline{I_{\mathrm b}(V)}$. Hence, with $(p,q)$ fixed, $J(z;u_s)$ is nonsingular if and only if the real $2n\times2n$ Jacobian of $V\mapsto s(V)$ at fixed $V_0$ is nonsingular. The same holds on every connected subtree with its own root phasor.

\emph{Step 2: pointwise nonsingularity.} Under~\eqref{eq:wang-condition}, a variation with $\delta s=0$ and $\delta V_0=0$ satisfies $\delta[I_{\mathrm b}]_i=-\bigl([I_{\mathrm b}]_i/\overline{V_i}\bigr)\overline{\delta V_i}$ and $\delta V=\Zb\,\delta I_{\mathrm b}$. If $\eta:=\max_i|\delta V_i|/|V_i|>0$, an attaining index $i$ gives
\[
 \eta|V_i|\le\sum_{j\in\mathcal N}|[\Zb]_{ij}|\,|[I_{\mathrm b}]_j|\,\frac{|\delta V_j|}{|V_j|}<\eta|V_i|,
\]
a contradiction. Thus $\delta V=0$, and $J(z;u_s)$ is nonsingular by Step~1. This is the diagonal-dominance argument of~\cite[Thm.~2]{wang2016necessary}.

\emph{Step 3: a common path.} Put $\Delta:=\Zb I_{\mathrm b}$ and $R_i:=\sum_j|[\Zb]_{ij}|\,|[I_{\mathrm b}]_j|$, so that $|\Delta_i|\le R_i<|V_i|$. Scale the injection currents,
\begin{equation}\label{eq:app-current-path}
 I_{\mathrm b}(t):=tI_{\mathrm b},\qquad V(t):=V_0\mathbf1+t\Delta,\qquad t\in[0,1],
\end{equation}
as in~\cite[Thm.~2]{cui2018new}. Since $V_i(t)=V_i-(1-t)\Delta_i$,
\[
 |V_i(t)|\ge|V_i|-(1-t)R_i>tR_i=\sum_{j\in\mathcal N}|[\Zb]_{ij}|\,|[I_{\mathrm b}(t)]_j|,
\]
so voltages stay positive and~\eqref{eq:wang-condition} holds along the path, with injections $s(t)=V(t)\circ\overline{I_{\mathrm b}(t)}$ that vary with $t$. Mapping back through Step~1 gives an exact DistFlow path $z(t)$ at fixed $u_s$ from $z^0$ ($t=0$) to $z$ ($t=1$), on which $J$ is nonsingular by Step~2. Hence $z\in\mathcal R_{\mathrm{vs}}$.

\emph{Step 4: compatible subtrees.} Fix $\kappa\in\mathcal S(\mathcal G)$. For $a\in\mathcal N_\kappa$, let $D_\kappa(a)$ consist of $a$ and all buses reached from $a$ through omitted outgoing branches. These sets are disjoint. By Kirchhoff's law, the current on an omitted branch $k$ with $\pi(k)=a$ equals minus the sum of the injection currents below it, so subtracting the omitted sending-end powers in~\eqref{eq:compatible-subtree-injections} gives the compatible injection currents
\[
 [I_{\mathrm b}^\kappa(t)]_a=t\sum_{j\in D_\kappa(a)}[I_{\mathrm b}]_j,\qquad a\in\mathcal N_\kappa,
\]
with $V_\kappa(t)=V_{s_\kappa}(t)\mathbf1+\Zb^\kappa I_{\mathrm b}^\kappa(t)$, where $\Zb^\kappa$ is~\eqref{eq:bus-impedance} formed on $\kappa$ with root $s_\kappa$. Branches omitted at $s_\kappa$ only change the local slack balance. For $i,a\in\mathcal N_\kappa$ and $j\in D_\kappa(a)$, the common path of $i$ and $a$ from $s_\kappa$ is a subset of the common root path of $i$ and $j$; since $\rr,\xx\ge0$, $|[\Zb^\kappa]_{ia}|\le|[\Zb]_{ij}|$. By disjointness,
\[
 \sum_{a\in\mathcal N_\kappa}|[\Zb^\kappa]_{ia}|\,|[I_{\mathrm b}^\kappa(t)]_a|
 \le t\sum_{a\in\mathcal N_\kappa}\sum_{j\in D_\kappa(a)}|[\Zb]_{ij}|\,|[I_{\mathrm b}]_j|
 \le tR_i<|V_i(t)|.
\]
Thus~\eqref{eq:wang-condition} holds on $\kappa$ with root phasor $V_{s_\kappa}(t)\ne0$, and Steps~1--2 applied to $\kappa$ give $\det J_\kappa(z_\kappa(t);u_{s_\kappa}(t))\ne0$ for all $t\in[0,1]$. Since $\kappa$ was arbitrary, the path~\eqref{eq:app-current-path} lies in the set of Definition~\ref{def:strong-voltage-regularity}, and $z\in\mathcal R_{\mathrm{svs}}$.
\end{proof}
\end{revblock}

\subsection{Proof of~\eqref{eq:gan-strict-weighted-inverse} in Proposition~\ref{prop:gan-transformed-M}}\label{app:proof-prop-gan-transformed-M}
\begin{proof}
\rev{With $\mathbf M_\ell^{-\top}\xi\in\R^n_+$ already established in the main text, it remains to show strict positivity.} For \rev{the strict statement, fix} $\xi\in\{\mathscr r,\mathscr x\}$ \rev{and} set
\[
\begin{aligned}
\mathbf A&:=\hat C^{-\top}\hat{\mathbf Z}\hat{\mathbf U}^{-1}\in\R^{n\times n}_{+},\\
b&:=\hat{\mathbf U}^{-\top}\xi\in\R^n_{+},\qquad v:=\mathbf M_\ell^{-\top}\xi\in\R^n_{+}.
\end{aligned}
\]
Since $\mathbf W=\mathbf I-\mathbf L\mathbf A$, we have $v=b+\mathbf A^\top\mathbf L v$. We show that $v_j>0$ whenever $\xi_j>0$. Suppose otherwise, and choose the smallest index $j$ with $\xi_j>0$ and $v_j=0$ in the topological ordering. If $\xi_i\ell_i=0$ for every strict ancestor $i$ of $j$, then each contribution $\xi_i[\hat{\mathbf U}^{-1}]_{ij}$ vanishes: either $\xi_i=0$, or $\ell_i=0$ forces $P_i=Q_i=0$ by~\eqref{eq:DistFlow-nonlinear}, making row $i$ of $\hat{\mathbf U}^{-1}$ diagonal. Thus $b_j=\xi_j/u_j>0$, a contradiction. Otherwise, let $k$ be the deepest strict ancestor of $j$ with $\xi_k\ell_k>0$, and let $h$ be its child on the path to $j$. The same argument on the intervening path gives $[\mathbf T_\xi]_{hj}=\xi_j/u_j$. The factorization in the proof of Lemma~\ref{lem:gan-triangular-signs} yields
\[
\begin{aligned}
\mathbf A&=(-\hat C^{-\top})\bigl[\mathbf R(\mathbf I_n+\hat C_f)\mathbf T_r+\mathbf X(\mathbf I_n+\hat C_f)\mathbf T_x\bigr],\\
A_{kj}&\ge\xi_k[\mathbf T_\xi]_{hj}=\frac{\xi_k\xi_j}{u_j}>0.
\end{aligned}
\]
Since $k<j$ and $\xi_k>0$, we have $v_k>0$, and hence $v_j\ge A_{kj}\ell_kv_k>0$, again a contradiction. Therefore $[\mathbf M_\ell^{-\top}\xi]_j>0$ whenever $\xi_j>0$. For $\alpha,\beta\ge0$, each positive component of $\alpha\mathscr r+\beta\mathscr x$ consequently gives a positive component of $\alpha\mathbf M_\ell^{-\top}\mathscr r+\beta\mathbf M_\ell^{-\top}\mathscr x$, proving~\eqref{eq:gan-strict-weighted-inverse}.
\end{proof}

\subsection{Proof of~\eqref{eq::strict::ineq} in Corollary~\ref{col:gan-slack-monotonicity}}\label{app:proof-col-gan-slack-monotonicity}
\begin{proof}
\rev{With $b:=\hat{\mathbf U}^{-\top}\zeta$ and the derivative formula from the main text, it remains to characterize strictness.} Since $\mathbf K_u\ell,\,b,\,\ell\in\R^n_{+}$, this derivative is strictly negative if and only if $b^\top\ell>0$. It remains to show that $b^\top\ell=0$ if and only if $\zeta^\top\ell=0$. Suppose $b^\top\ell=0$. Then $b_j=0$ whenever $\ell_j>0$, while $\ell_j=0$ implies $P_j=Q_j=0$ by~\eqref{eq:DistFlow-nonlinear}. Thus $\mathbf P b=\mathbf Q b=0$, and~\eqref{eq:hatU} gives
\[
 \zeta=\hat{\mathbf U}^\top b
 =\bigl(\mathbf U-2\mathbf R\Gamma^\top\mathbf P
              -2\mathbf X\Gamma^\top\mathbf Q\bigr)b
 =\mathbf U b.
\]
Hence $\zeta^\top\ell=\sum_{j\in\mathcal N}u_j b_j\ell_j=0$. Conversely, if $\zeta^\top\ell=0$, the same argument gives $\mathbf P\mathbf U^{-1}\zeta=\mathbf Q\mathbf U^{-1}\zeta=0$, so
\[
 \hat{\mathbf U}^\top\mathbf U^{-1}\zeta=\zeta,
 \qquad
 b=\hat{\mathbf U}^{-\top}\zeta=\mathbf U^{-1}\zeta.
\]
Therefore $b^\top\ell=\sum_{j\in\mathcal N}\zeta_j\ell_j/u_j=0$. This proves the equivalence and completes both strictness statements.
\end{proof}

\begin{revblock}
\subsection{Proof of Corollary~\ref{col:passive-multiplier-bound}}\label{app:proof-passive-multiplier-bound}
\begin{revblock}
\begin{proof}
By~\eqref{eq:affine-elimination-slack}, $\delta^j_x=\partial_\ell x^\star\,\delta^j_\ell$, so $\delta^j$ is tangent to the affine solution map at fixed $(p,q,u_s)$ and $\mathrm Dg_\lin\,\delta^j=0$. Moreover $\mathrm Dg_\non\,\delta^j=\partial_\ell\hat g_\non\,\delta^j_\ell=-\mathbf M_\ell^*\delta_\ell^j=\mathbf e_j$. Since $f$ does not depend on $(P,Q)$ directly and $\delta_p^j=\delta_q^j=0$,
\[
 -\langle\nabla f(z^*),\delta^j\rangle=-(c_\ell^*)^\top\delta_\ell^j=[(\mathbf M_\ell^*)^{-\top}c_\ell^*]_j.
\]
Proposition~\ref{prop:isolating-direction} with $\eta=1$ and the polarity~\eqref{eq:normal-tangent-polarity} give the lower bound, and Theorem~\ref{thm:passive-exactness} makes its right-hand side positive under either condition. The signs of $\delta_u^j=\mathbf K_u^*\mathbf e_j$ and $\zeta^\top\delta_\ell^j=-\mathbf e_j^\top(\mathbf M_\ell^*)^{-\top}\zeta$, $\zeta\in\{\rr,\xx\}$, follow from Proposition~\ref{thm:strong-region-M} under $\mathcal F$ and from Proposition~\ref{prop:gan-transformed-M} under $\mathcal C$. The slack-injection changes follow from the power balance~\eqref{eq::slack::injections} with $(p,q)$ fixed.
\end{proof}
\end{revblock}

\subsection{Proofs of Lemmas~\ref{lem:hybrid-descendant-response} and~\ref{lem:hybrid-global-extension}}\label{app:proof-hybrid-lemmas}
\begin{proof}[Proof of Lemma~\ref{lem:hybrid-descendant-response}]
\rev{Take $\delta_\sigma$ as $\tau_\sigma$ times the slack-voltage sensitivity of the corresponding local solution map, so that the local residual derivatives, including the root-voltage contribution, vanish.} The signs for $\delta_{u_{\sigma}},\delta_{\ell_\sigma}\in\R^{n_{\sigma}}$ in \rev{(i), (ii), and (iii)} are precisely the conclusions of Corollaries~\ref{prop:slack-monotonicity-injections}, \ref{col:gan-slack-monotonicity} and~\ref{prop:slack-monotonicity-receiving}.
In \rev{(iii)}, the signs for $\delta_{p^{\sigma}},\delta_{\rev{q}^\sigma}\in\R^{n_{\sigma}}$ come from the sensitivity~\eqref{eq::dispatch::pq} and the nonnegative inverse of $\mathbf{M}_{\ell,\sigma}^r$ by Proposition~\ref{prop::nonsingular::Mr}.

For the compatible local slack injections $\delta_{p_{0}^{\sigma}},\delta_{q_{0}^{\sigma}}$, \rev{nonpositivity} in (i) and (ii) comes from the \rev{subtree} power balance~\eqref{eq::slack::injections::local} with $(p^\sigma,q^\sigma)$ fixed, i.e.,
\[
\delta_{p_{0}^{\sigma}}= \rr_\sigma^\top\delta_{\ell_\sigma}\le0,\qquad \delta_{q_{0}^{\sigma}}= \xx_\sigma^\top\delta_{\ell_\sigma}\le0,
\]
while \rev{nonpositivity} in (iii) comes from the nodal balance at the local \rev{slack}~\eqref{eq::slack::balance::local} with $(P_\sigma,Q_\sigma)$ fixed, i.e.,
\[
\delta_{p_{0}^{\sigma}}=\hspace{-4pt}\sum_{\substack{k:\pi(k)=s_\sigma\\ \rev{k\in\mathcal N_\sigma}}}\hspace{-2pt}\rr_k[\delta_{\ell_\sigma}]_k\le0,\;
\delta_{q_{0}^{\sigma}}=\hspace{-4pt}\sum_{\substack{k:\pi(k)=s_\sigma\\ \rev{k\in\mathcal N_\sigma}}}\hspace{-2pt}\xx_k[\delta_{\ell_\sigma}]_k\le0,
\]
which \rev{completes} the proof.
\end{proof}

\begin{proof}[Proof of Lemma~\ref{lem:hybrid-global-extension}]
Fix $j\in\mathcal N_\kappa$. Apply~\eqref{eq::direction::passive} to the source subtree $\kappa$, with $\delta_{u_{s_\kappa}}^j=0,\;
\delta_{p^\kappa}^j=\delta_{q^\kappa}^j=0,$ and
\[
\delta_{\ell_\kappa}^j
=-\mathbf M_{\ell,\kappa}^{-1}\mathbf e_{j,\kappa},
\qquad
\delta_{u_\kappa}^j
=\mathbf K_{u,\kappa}\mathbf e_{j,\kappa}\rev{.}
\]
This direction has zero local affine residual derivative and nonlinear residual derivative $\mathbf e_{j,\kappa}$, as \rev{shown} in the proof of Corollary~\ref{col:passive-multiplier-bound}. By Lemma~\ref{lem:strong-subtree-inheritance} and Propositions~\ref{thm:strong-region-M} and~\ref{prop:gan-transformed-M},
\[
\delta_{u_\kappa}^j\in\R^{n_\kappa}_+,
\qquad
\rr_\kappa^\top\delta_{\ell_\kappa}^j\le0,
\qquad
\xx_\kappa^\top\delta_{\ell_\kappa}^j\le0.
\]
Since the compatible injections are fixed, differentiating~\eqref{eq::slack::injections::local} gives
\[
\delta_{p_0^\kappa}^j
=\rr_\kappa^\top\delta_{\ell_\kappa}^j,
\qquad
\delta_{q_0^\kappa}^j
=\xx_\kappa^\top\delta_{\ell_\kappa}^j.
\]
Proceed through the descendants in parent-to-child order. For each descendant $\sigma$, set $\tau_\sigma^j$ equal to the already assigned voltage change at $s_\sigma$ of its parent subtree, and apply Lemma~\ref{lem:hybrid-descendant-response}. Its voltage conclusion ensures that $\tau_\sigma^j\ge0$ at every stage. Each response preserves the local DistFlow equations~\eqref{eq:DistFlow-matrix} to first order, including the root-voltage contribution, and has nonpositive compatible-injection, slack-input, and weighted-loss changes. On every subtree outside $\{\kappa\}\cup\mathcal K^{\mathrm{des}}(\kappa)$, set the variations of $(P,Q,u,\ell)$ and the compatible injections to zero.

Assemble the branch variables using the partition and define the physical injection changes by~\eqref{eq::adjustments}. Every term is nonpositive. Only controllable interface buses or buses in dispatchable subtrees require injection adjustments, so fixed nodal injections remain unchanged.

Matching the shared root voltages preserves the voltage equations~\eqref{eq:DistFlow-matrix:U}, and the injection adjustments~\eqref{eq::adjustments} restore all nodal balances~\eqref{eq:DistFlow-matrix:P}\eqref{eq:DistFlow-matrix:Q}. The source contributes local nonlinear residual~\eqref{eq:DistFlow-matrix:nonlinear} by $\mathbf e_{j,\kappa}$, while all other subtrees contribute zero. Thus~\eqref{eq:hybrid-global-isolation} holds. The voltage signs follow from the construction, and summing the nonpositive subtree loss changes gives
\[
\rr^\top\delta_\ell^j\le0,
\qquad
\xx^\top\delta_\ell^j\le0.
\]
Finally, suppose $\rr_\kappa\in\R_{++}^{n_\kappa}$. Under $\mathcal F_\kappa$, the inverse $\mathbf M_{\ell,\kappa}^{-1}$ is nonnegative with positive diagonal. Under $\mathcal C_\kappa$, Proposition~\ref{prop:gan-transformed-M} gives~\eqref{eq:gan-strict-weighted-inverse}. In either case,
\[
\mathbf M_{\ell,\kappa}^{-\top}\rr_\kappa
\in\R_{++}^{n_\kappa},
\]
and hence
\[
\rr_\kappa^\top\delta_{\ell_\kappa}^j
=-\mathbf e_{j,\kappa}^\top
\mathbf M_{\ell,\kappa}^{-\top}\rr_\kappa<0.
\]
At $s_\kappa$, differentiating the local power balances of the subtree~\eqref{eq::slack::injections::local} gives
\[
\delta_{p_{s_\kappa}}^j
=\delta_{p_0^\kappa}^j
=\rr_\kappa^\top\delta_{\ell_\kappa}^j<0\rev{.}
\]
For all other \rev{subtrees}, the changes in active power loss are nonpositive, so $\rr^\top\delta_\ell^j<0$.
\end{proof}

\end{revblock}
\end{appendices}

\bibliography{root}
\end{document}